%% file: main.tex
\documentclass[11pt,a4paper]{article}

\usepackage[margin=3cm]{geometry}
\usepackage{amsmath,amssymb,amsthm}
\usepackage{mathtools}
\usepackage{microtype}
\usepackage{xcolor}
\usepackage{tikz}
\usepackage[british]{babel}
\usepackage[hidelinks]{hyperref}
\usepackage[capitalise,noabbrev]{cleveref}

\theoremstyle{plain}
\newtheorem{theorem}{Theorem}
\newtheorem*{theorem*}{Theorem}

\newtheorem{lemma}[theorem]{Lemma}
\theoremstyle{definition}
\newtheorem{problem}{Problem}

\AddToHook{env/theorem/begin}{\crefalias{theorem}{theorem}}
\AddToHook{env/proposition/begin}{\crefalias{theorem}{proposition}}
\AddToHook{env/corollary/begin}{\crefalias{theorem}{corollary}}
\AddToHook{env/lemma/begin}{\crefalias{theorem}{lemma}}

\DeclareMathOperator{\CFI}{CFI}
\DeclareMathOperator{\tr}{tr}
\DeclareMathOperator{\val}{val}
\DeclareMathOperator{\supp}{supp}
\DeclareMathOperator{\rank}{rank}
\DeclareMathOperator{\diag}{diag}
\DeclareMathOperator{\spn}{span}
\DeclareMathOperator{\im}{im}
\DeclareMathOperator{\len}{len}

\newcommand{\R}{\mathbb{R}}
\newcommand{\Z}{\mathbb{Z}}
\newcommand{\calF}{\mathcal{F}}
\newcommand{\ones}{\mathbf{1}}

\title{Distinguishing graphs with simple spectrum by homomorphism counts}
\author{Takanori Maehara\thanks{\texttt{tmaehara@acm.org}}}
\date{21st October 2026}
\hypersetup{
  pdftitle={Distinguishing graphs with simple spectrum by homomorphism counts},
  pdfauthor={Takanori Maehara}
}

\begin{document}
\maketitle
\begin{abstract}
We study which graph classes distinguish any two non-isomorphic graphs
with simple spectrum by homomorphism counts from their members.
We show that such a class must have unbounded treewidth and unbounded
Euler genus, and must contain a graph with the complete graph on seven
vertices as a minor.
To prove this, we construct non-isomorphic cospectral pairs of bipartite,
nonsingular graphs with simple spectrum and maximum degree at most six
by attaching rooted trees to Cai--F\"urer--Immerman graphs.
The attachments preserve homomorphism indistinguishability over any
minor-closed class over which the original pair is indistinguishable.
We then prove that homomorphism counts from subcubic graphs identify
every graph of nullity at most one, extending the known result for
nonsingular graphs and covering all
graphs with simple spectrum.
The proof uses a linear-algebraic characterisation in which two graphs
have the same subcubic homomorphism counts if and only if they have the
same number of vertices and the images of their adjacency matrices are
related by a bijective linear map that
intertwines the adjacency operators and preserves both the inner products
and the trilinear products.
Finally, let $\calF$ consist of the single-vertex graph and all subdivisions
of the graphs in a sequence of connected cubic graphs with arbitrarily
large clique minors. We prove that homomorphism counts from $\calF$ have
the same distinguishing power as subcubic homomorphism counts.
\end{abstract}

{\small
\noindent\textbf{Keywords.}
graph homomorphism, graph isomorphism, graph spectrum.
\par\smallskip
\noindent\textbf{2020 Mathematics Subject Classification.}
05C60, 05C50, 05C83.
\par}

\section{Introduction and Main Results}
\label{sec:intro}

Graphs are finite, simple, and undirected unless stated otherwise.
For a graph $G$, we write $A_G$ for its adjacency matrix and
$N_G(x)$ for the set of neighbours of $x$ in $G$.
We write
$\R^\times=\R\setminus\{0\}$ for the set of nonzero real numbers.
The \emph{nullity} of a graph is the dimension of the kernel of its
adjacency matrix.
A graph is \emph{nonsingular} if its nullity is zero.
A \emph{homomorphism} from a graph $F$ to a graph $G$
is a map from $V(F)$ to $V(G)$ that preserves adjacency. We write
$\hom(F,G)$ for the number of such maps. Two graphs $G$ and $G'$ are \emph{homomorphism
indistinguishable} over a graph class $\mathcal C$, written
$G\equiv_{\mathcal C}G'$, if
$\hom(F,G)=\hom(F,G')$ for every $F\in\mathcal C$.

Homomorphism counts characterise a range of graph properties and
equivalence relations. Lov\'asz~\cite{Lovasz1967} proved that two graphs
$G$ and $G'$ are isomorphic if and only if $\hom(F,G)=\hom(F,G')$ for every
graph $F$.
A graph is bipartite if and only if its homomorphism count from every odd
cycle is zero. Two graphs $G$ and $G'$ with the same number of vertices
are cospectral if and only if $\hom(C,G)=\hom(C,G')$ for every cycle $C$,
because the homomorphism count from the $k$-cycle to a graph is the trace
of the $k$-th power of its adjacency matrix. For every $k\geq1$, two graphs are
homomorphism indistinguishable over the graphs of treewidth at most $k$ if
and only if the $k$-dimensional Weisfeiler--Leman test does not distinguish
them~\cite{Dvorak2010,DellGroheRattan2018}.

A graph has \emph{simple spectrum} if its adjacency matrix has pairwise
distinct eigenvalues. We study homomorphism characterisations of
isomorphism on graphs with simple spectrum.
We call a class \emph{complete on simple spectrum} if any two graphs with
simple spectrum that are homomorphism indistinguishable over the class
are isomorphic.
By the result of Lov\'asz, the class of all graphs is complete on simple
spectrum. Motivated by the polynomial-time solvability of isomorphism
testing on graphs with simple spectrum~\cite{LeightonMiller1979,BabaiGrigoryevMount1982},
we ask whether smaller classes of test graphs can still be complete on
simple spectrum.

\begin{theorem}\label{thm:incompleteness}
Let $\mathcal C$ be any one of the following graph classes:
\begin{enumerate}
 \item graphs of treewidth at most $k$, for a fixed integer $k\geq1$;
 \item graphs embeddable on a surface of Euler genus at most $g$, for a fixed integer $g\geq0$;
 \item graphs with no $K_7$ minor.
\end{enumerate}
There are non-isomorphic graphs $G$ and $G'$ with simple spectrum such that
$G\equiv_{\mathcal C}G'$.
The graphs $G$ and $G'$ can be chosen cospectral, with each graph
bipartite, nonsingular, and of maximum degree at most six.
Consequently, every class complete on simple spectrum has unbounded treewidth
and unbounded Euler genus, and contains a graph with a $K_7$ minor.
\end{theorem}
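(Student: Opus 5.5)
The plan is to follow the route sketched in the abstract: start from a Cai--F\"urer--Immerman (CFI) pair, then decorate it with rooted trees until the spectrum becomes simple, without losing indistinguishability.

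\emph{Step 1: the base pair.} Fix a connected base graph $H$ of maximum degree at most three and form the CFI pair $\CFI(H)$ and $\widetilde{\CFI}(H)$ (untwisted and twisted). The known results of Roberson and of Dvo\v{r}\'ak and Dell--Grohe--Rattan give: $\CFI(H)\equiv_{\mathcal C}\widetilde{\CFI}(H)$ for every minor-closed class $\mathcal C$ that does not contain $H$. Both graphs are non-isomorphic whenever $H$ is connected. We choose $H$ according to the class:
\begin{itemize}
\item for treewidth at most $k$, take $H$ a subcubic graph of treewidth greater than $k$, such as a subcubic expander or a wall;
\item for Euler genus at most $g$, take $H$ a subcubic graph of Euler genus greater than $g$;
\item for no $K_7$ minor, take $H$ a subcubic graph containing a $K_7$ minor.
\end{itemize}
Each of these three classes is minor-closed and excludes the chosen $H$. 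We use the bipartite version of the CFI construction, with vertex gadgets and edge vertices. Its degrees are small: edge vertices have degree $2$, and gadget vertices have degree $3$ when $H$ is cubic. Both graphs in the pair are bipartite.

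\emph{Step 2: attachments preserve indistinguishability.} Define a rooted-tree attachment operation: to each vertex $v$ attach a rooted tree $T_v$, where the tree type depends only on a label of $v$ that is preserved by the isomorphism type of the CFI gadget structure. For example, the label can be the gadget a vertex belongs to, or its role within that gadget. We then prove a lemma: if $G\equiv_{\mathcal C}G'$ holds over a minor-closed class $\mathcal C$, then it also holds after the attachment, with labels respected. The argument expresses $\hom(F,G^{T})$ as a sum over homomorphisms of subgraphs of $F$ into $G$, weighted by rooted-tree homomorphism counts. Since $\mathcal C$ is minor-closed, the relevant quotient and contracted graphs stay in $\mathcal C$. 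To make this rigorous we work with vertex-labelled homomorphism indistinguishability. CFI graphs remain indistinguishable with vertex colours by gadget, so the labelled counts agree, and attaching trees amounts to weighting each vertex by a polynomial in the tree counts.

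\emph{Step 3: spectral properties.} This is the main obstacle. We must choose trees, for instance paths of distinct lengths or stars, so that both decorated graphs have simple spectrum, are nonsingular, have maximum degree at most six, and stay bipartite. Trees preserve bipartiteness. Cospectrality follows from Step 2, because cycles lie in every class considered and the vertex counts agree. For nonsingularity, we attach a pendant vertex (a path $P_2$) to every vertex: a graph with a perfect matching consisting of pendant edges is nonsingular, since its determinant is $\pm1$ by the unique perfect matching argument. For simple spectrum, first attach pendant paths of pairwise different lengths, chosen depending on the vertex label, to break the symmetries. Then argue genericity: the characteristic polynomial depends on the attached trees through the rooted characteristic polynomials of the trees. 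Repeated eigenvalues impose a nontrivial algebraic condition, and varying the tree lengths avoids it. The eigenvalues that must be handled explicitly are those with eigenvectors forced by the CFI automorphisms. Those automorphisms flip gadgets, and distinct attachments on the two sides of each gadget destroy them, but we must check that no accidental multiplicity survives.

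Keeping the degree at most six limits attachments to three extra branches per vertex. This suffices if each vertex gets at most a few branches, with asymmetry encoded in the lengths. Finally, the ``consequently'' clause follows directly. A complete class cannot be contained in any of the three classes, because each such class fails to separate the pair constructed above. Hence a complete class has unbounded treewidth and unbounded Euler genus, and contains a graph with a $K_7$ minor.
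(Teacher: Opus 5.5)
\textbf{The proposal has two genuine gaps: Step~1 rests on a result that is not known, and Step~3 lacks the idea that actually produces simple spectrum.}

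\textbf{Step~1.} You assume that $\CFI_0(H)\equiv_{\mathcal C}\CFI_1(H)$ for \emph{every} minor-closed $\mathcal C$ with $H\notin\mathcal C$, and call this a known result. It is not known. With $H=K_t$ and $\mathcal C$ the $K_t$-minor-free graphs, it would prove the weak Roberson conjecture, which is open. Combined with the rest of your argument it would also settle \cref{prob:clique-minors}. This is precisely why the theorem stops at $K_7$.

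What is available are specific bases. For treewidth at most $k$ there is $K_{k+2}$ (Neuen). For bounded Euler genus and for $K_7$-minor-free graphs there are the bases of Neuen--Seppelt, with Thilikos--Wiederrecht for the $K_7$ case. These bases are not subcubic. You therefore also need Roberson's result that CFI indistinguishability over a minor-closed class passes to any connected base containing the old base as a minor. The paper uses this to move to $\Gamma^{\mathbf2}$, where $\Gamma$ is a connected cubic graph built from $K_t$.

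There are two smaller slips in Steps~1 and~3:
\begin{itemize}
\item In the bipartite CFI graph over $\Gamma^{\mathbf2}$, the vertices over subdivision vertices have degree four, not two. Three extra branches there would break the degree bound of six.
\item For $k=1$, cycles are not in $\mathcal C$, so cospectrality does not follow from $\equiv_{\mathcal C}$. You can fix this by using the pair for $k=2$.
\end{itemize}

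\textbf{Step~3.} This is the decisive gap. You identify it as the main obstacle but do not resolve it.

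First, your symmetry-breaking plan contradicts Step~2. Step~2 works, via Roberson's lift-count equality, only if the tree at a vertex depends on its fibre alone, not on its role inside the gadget. Hence all fibre-preserving CFI automorphisms survive the decoration. They cannot be destroyed, and since they are involutions they do not obstruct simple spectrum anyway.

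The missing idea is the block decomposition of \cref{lem:ss-cfi-blocks}. For cubic $\Gamma$, the constant and signed fibre vectors show that the adjacency matrix of either decorated CFI graph over $\Gamma^{\mathbf2}$ is orthogonally similar to $A_{\widehat{\Gamma^{\mathbf2}_0}}\oplus\bigoplus_{e}A_{\widehat{\Gamma^{\mathbf2}_e}}$, independently of the parity. This gives cospectrality directly. It also reduces the problem to making finitely many small weighted rooted products nonsingular with simple, pairwise disjoint spectra.

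Even after this reduction, ``varying the tree lengths avoids a nontrivial algebraic condition'' is not an argument, for three reasons:
\begin{itemize}
\item The trees form a discrete family, so genericity is not automatic.
\item The condition must fail at every real $\lambda$ simultaneously. Writing $p_T,q_T$ for the characteristic polynomials of $T$ and of $T$ minus its root, the whole curve $\lambda\mapsto([p_{T_z}(\lambda):q_{T_z}(\lambda)])_z$ must miss the bad locus.
\item The edge blocks are coupled, because each tree over an original vertex appears in three of them.
\end{itemize}

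The paper needs four ingredients here:
\begin{itemize}
\item a kernel-dimension formula for rooted products in terms of $(p_T(\lambda),q_T(\lambda))$;
\item irreducibility of $\det(\diag(t)-A_H)$ for connected $H$, to show the bad locus has codimension two;
\item an avoidance lemma for a special caterpillar family;
\item an inductive, dimension-cutting choice of the trees.
\end{itemize}
Without a substitute for these, Step~3 restates the goal rather than proving it.
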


The first case of \cref{thm:incompleteness} answers the question of Hordan, Dym, and
Seppelt~\cite[Section~7]{HordanDymSeppelt2026}, who proved this
incompleteness for multigraphs and asked whether it holds for simple graphs.
The second case covers planar graphs, which have unbounded treewidth;
so unbounded treewidth alone does not suffice for completeness.
The proof constructs a pair of graphs by attaching rooted trees to a pair
of Cai--F\"urer--Immerman (CFI) graphs.
The two resulting graphs have simple spectrum and are homomorphism
indistinguishable over every minor-closed class over which the original pair
is homomorphism indistinguishable
(\cref{lem:ss-transfer}).

The $K_7$ bound comes from the existing results used to obtain the initial
CFI pair~\cite{NeuenSeppelt2026,ThilikosWiederrecht2024}.
Whether every class complete on simple spectrum must contain
graphs with arbitrarily large clique minors remains open
(\cref{prob:clique-minors}). An affirmative answer would imply the weak
Roberson conjecture, stated in \cref{sec:open-problems}.

We next ask which graph classes suffice for completeness on simple
spectrum. A graph is \emph{subcubic} if its maximum degree is at most
three. Let $\mathcal D_3$ be the class of subcubic graphs.
There are non-isomorphic graphs that are homomorphism indistinguishable
over $\mathcal D_3$~\cite{Roberson2026}.
Nevertheless, the following theorem gives completeness on simple spectrum,
since a graph with simple spectrum has nullity at most one.

\begin{theorem}\label{thm:main}
 Let $G$ be a graph of nullity at most one. For every graph $G'$,
 $G\equiv_{\mathcal D_3}G'$ if and only if $G\cong G'$.
 In particular, $\mathcal D_3$ is complete on simple spectrum.
\end{theorem}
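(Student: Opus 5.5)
The plan is to go through the linear-algebraic characterisation announced in the abstract. First I will establish (or invoke, once proved later in the paper) the following. $G\equiv_{\mathcal D_3}G'$ holds if and only if $|V(G)|=|V(G')|$ and there is a bijective linear map $\phi\colon\im A_G\to\im A_{G'}$ with three properties: $\phi A_G=A_{G'}\phi$ on $\im A_G$; $\langle\phi u,\phi v\rangle=\langle u,v\rangle$; and $\sum_x(\phi u)_x(\phi v)_x(\phi w)_x=\sum_x u_xv_xw_x$ for all $u,v,w\in\im A_G$.

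The intuition for this characterisation is as follows. A subcubic graph can be built from rooted pieces glued along vertices of degree at most three. Homomorphism counts then become evaluations of tensor expressions in $A$, entrywise (Hadamard) products of at most three vectors, and the all-ones vector. Every edge incident to a glued vertex contributes a factor of $A$, so the vectors being multiplied all lie in $\im A$. Given that characterisation, the task reduces to showing that such a $\phi$ is induced by a permutation matrix when the nullity of $G$ is at most one.

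For the nonsingular case, $\im A_G=\R^{V(G)}$. The trilinear form $T(u,v,w)=\sum_x u_xv_xw_x$ together with the inner product determines the Hadamard product $u\circ v$, since $\langle u\circ v,w\rangle=T(u,v,w)$. Because $\phi$ is orthogonal and preserves $T$, it satisfies $\phi(u\circ v)=\phi u\circ\phi v$. It is therefore an algebra isomorphism $\R^n\to\R^n$ for the coordinatewise product. It must then permute the primitive idempotents $e_x$, so $\phi$ is a permutation matrix $P$. Intertwining gives $PA_G=A_{G'}P$, hence $G\cong G'$.

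For nullity one, $\im A_G=\ker(A_G)^\perp=z^\perp$, where $z$ spans the kernel; similarly $\im A_{G'}=z'^\perp$, and the dimensions agree. The Hadamard product no longer closes on $z^\perp$. What survives is the projected product $\pi(u\circ v)$, where $\pi$ is the orthogonal projection onto $z^\perp$, and $\phi$ intertwines the projected products. From this I will try to recover coordinate data. For $u\in z^\perp$, the quantities $\langle u\circ u,u\rangle$ and $\|\pi(u\circ v)\|^2$ are preserved, and so is the number of vertices.

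Concretely, I would try to show that $\phi$ extends to an orthogonal map $\hat\phi\colon\R^n\to\R^n$ with $\hat\phi z=\pm z'$ that preserves the full Hadamard product. The natural candidates for determining the extension are $\ones$ and the projections $\pi e_x$. The vector $\ones$ is not in $z^\perp$ in general, but the projection $\pi\ones$ is. A useful substitute for $\ones$ is the unit of the projected algebra restricted to $z^\perp$. Another route is to characterise the vectors $\pi e_x$ within $z^\perp$ as the images of the idempotent-like elements. These satisfy $\pi(\pi e_x\circ\pi e_x)$ with controlled defect in the $z$-direction.

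The main obstacle is exactly this step: recovering the standard basis up to permutation from the algebraic data restricted to a codimension-one subspace. I would first handle it by counting dimensions. Since $\ker A_G$ is one-dimensional, the products $u\circ v$ for $u,v\in z^\perp$ span $\R^n$ unless $z$ has a special support pattern. Using $T$ on $z^\perp$, one can determine $\langle u\circ v, w\circ t\rangle$ via four-fold products, which are themselves subcubic-expressible (two degree-three vertices joined through $\im A$). This would let me define $\hat\phi$ on $\spn\{u\circ v\}=\R^n$ and check that it is well defined and orthogonal. The degenerate support patterns for $z$ would be handled separately, by showing that they force an isomorphism of the components directly.
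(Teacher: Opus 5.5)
Your reduction to the algebraic characterisation is the paper's route; it is \cref{lem:characterisation}, which you may invoke. Your intuition only explains the easy direction, though. The direction you actually need, from equal counts to the existence of $\phi$, is obtained in the paper from Young's converse of the orthogonal Holant theorem together with interpolation in subdivision lengths. Your nonsingular case is correct: preservation of $\langle\cdot,\cdot\rangle$ and $T$ makes $\phi$ multiplicative for $\circ$, so it permutes the minimal idempotents $e_x$. This is equivalent to the paper's use of \cref{lem:odeco}.

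The gap is in the nullity-one case, exactly at the step you flag. To define $\hat\phi(u\circ v)=\phi u\circ\phi v$ consistently and orthogonally, $\phi$ must transport the four-fold products $\sum_x u_xv_xw_xt_x=\langle u\circ v,w\circ t\rangle$ for $u,v,w,t\in z^\perp$. These are not subcubic-expressible. Joining two degree-three vertices through $\im A_G$ contracts only over an orthonormal basis of $z^\perp$, so it yields $\langle\pi(u\circ v),\pi(w\circ t)\rangle$ for your projection $\pi$. The full product is
\[
 \langle u\circ v,w\circ t\rangle=\langle\pi(u\circ v),\pi(w\circ t)\rangle+\langle u\circ v,z\rangle\langle w\circ t,z\rangle ,
\]
and a genuine four-fold product corresponds to a degree-four vertex. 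The form $(u,v)\mapsto\langle u\circ v,z\rangle$ on $z^\perp$ is the paper's matrix $\mathsf C_0$. This form, together with $\langle\ones,z\rangle$ and the sign of $z'$, is precisely the data that $\phi$ does not transport a priori. Recovering it is the whole difficulty, so your step is circular.

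The paper recovers this data by a case analysis that you would need to supply.
\begin{itemize}
\item If $\langle\ones,z\rangle\neq0$, expanding $\ones\circ u_j=u_j$ in the eigenbasis expresses $\langle\ones,z\rangle\mathsf C_0$ through transported quantities. A sign choice for $z'$ then gives $\mathsf C'_0=\mathsf C_0$.
\item If $\langle\ones,z\rangle=0$, antisymmetrising the displayed identity in $v,w$ cancels the unknown left side. Hence $\mathsf C_0$ and $\mathsf C'_0$ have equal $2\times2$ minors, which forces $\mathsf C'_0=\pm\mathsf C_0$ when $\rank\mathsf C_0\geq3$.
\item If $\rank\mathsf C_0\leq2$, a Sylvester rank bound gives $|\supp z|\leq4$.
  \begin{itemize}
  \item Support three is impossible: $A_Gz=0$ would give two vertices with equal neighbourhoods, contradicting nullity one.
  \item Support two is immediate.
  \item Support four needs a separate argument. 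It uses the integrality $A_{G'}(a',w)\in\{0,1\}$, that is, graph structure beyond the transported algebraic data.
  \end{itemize}
\end{itemize}
Your placeholder for ``degenerate support patterns'' would have to contain all of this, and it currently does not.
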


\Cref{thm:main} extends~\cite[Corollary~38]{CaiYoung2026} from
nonsingular graphs to graphs of nullity at most one.
To prove this theorem, we characterise subcubic homomorphism
indistinguishability for arbitrary graphs (\cref{lem:characterisation}):
$G\equiv_{\mathcal D_3}G'$ if and only if
$G$ and $G'$ have the same number of vertices and there is a bijective
linear map between the images of their adjacency matrices that intertwines the adjacency
operators and preserves both the inner products and the trilinear products.
For maximum degree three, \cref{lem:characterisation} provides a concrete
algebraic description of bounded-degree homomorphism indistinguishability,
as requested after Corollary~39 of~\cite{CaiYoung2026}.

It remains open whether the nullity bound in \cref{thm:main}
can be relaxed from one to two (\cref{prob:nullity-two}).
The bound cannot be raised to three: \cref{sec:open-problems} gives
non-isomorphic graphs of nullity three that are homomorphism
indistinguishable over $\mathcal D_3$.
On arbitrary graphs, deciding subcubic homomorphism indistinguishability
is at least as hard as graph isomorphism (\cref{sec:open-problems}).
We therefore ask whether a
subclass of $\mathcal D_3$ can be complete on simple spectrum while its
homomorphism indistinguishability relation is decidable in polynomial
time on arbitrary graphs (\cref{prob:polynomial-testing}).

Finally, we ask how far the subcubic class in \cref{thm:main} can be
restricted.
\begin{theorem}\label{thm:closure}
 Let $(H_N)_{N\geq1}$ be a sequence of connected cubic graphs such that,
 for every $t\geq1$, some $H_N$ contains $K_t$ as a minor.
 Let $\calF$ consist of $K_1$ and all subdivisions of the graphs $H_N$.
 Then
 \begin{equation}\label{eq:restriction-equivalence}
  G\equiv_{\calF}G'\Longleftrightarrow
  G\equiv_{\mathcal D_3}G'
 \end{equation}
 for all graphs $G$ and $G'$. In particular, $\calF$ is complete on simple spectrum.
\end{theorem}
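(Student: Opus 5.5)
The backward direction of \eqref{eq:restriction-equivalence} is immediate, because every member of $\calF$ is subcubic: $K_1$ has degree zero, and subdividing a cubic graph creates only vertices of degree two. So the work is the forward direction. Assume $G\equiv_{\calF}G'$. I will show that the algebraic data of \cref{lem:characterisation} can be recovered, namely equal vertex counts and an isometric, intertwining, trilinear-preserving bijection between $\im A_G$ and $\im A_{G'}$. The count $\hom(K_1,\cdot)$ gives $|V(G)|=|V(G')|$.

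For the remaining data, I will use the standard reformulation of $\hom(F,G)$ for a subdivided cubic graph $F$. Each edge of $H_N$ subdivided into a path of length $\ell\geq1$ contributes the matrix $A_G^{\ell}$. Each degree-three vertex contributes the trilinear form
\[
T_G(x,y,z)=\sum_{v}x_vy_vz_v .
\]
Hence $\hom(F,G)$ is the contraction of a copy of $T_G$ at every vertex of $H_N$ with $A_G^{\ell_e}$ along every edge $e$. Write $A_G=\sum_\lambda\lambda P_\lambda$. Varying all the lengths $\ell_e$ and using a Vandermonde or polynomial-interpolation argument in the $\ell_e$, I can replace each $A^{\ell_e}$ by an arbitrary polynomial $p_e(A_G)$ with $p_e(0)=0$. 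The restriction $\ell\ge1$ is exactly why only $\im A_G$ appears. In particular, I can insert arbitrary spectral projections $P_\lambda$ with $\lambda\neq0$ on each edge.

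Therefore $G\equiv_\calF G'$ is equivalent to equality of all ``spectrally decorated'' $H_N$-tensor-network values built from the restricted tensors $T_G|_{\im A_G}$. Running the same argument for all subcubic $F$, via the characterisation proof, equality over $\mathcal D_3$ is the same statement with $H_N$ replaced by an arbitrary subcubic multigraph. Here the degree-one and degree-two vertices are handled by the vector $\ones$ projected to $\im A$, and by the inner product.

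The key step, and the main obstacle, is to show that the $H_N$-networks already determine all subcubic networks. My plan is to first derive that any cubic (multi)graph $K$ appearing as a minor of $H_N$ has its network value determined. The idea is that contracting an edge $e$ of $H_N$ corresponds to letting the decoration on $e$ be ``the identity on $\im A$''. Deleting an edge can be realised by projections, or by splitting through the $\ones$ vector, using the polynomial freedom on the adjacent edges. Since every subcubic graph is a minor of a cubic graph with a large clique minor, this requires care.

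Concretely, I will use that every cubic graph $K$ of bounded size is a topological minor of a cubic graph with a large enough clique minor. So $K$, or a graph from which $K$ is obtained by deleting edges, is a minor of some $H_N$. I would then realise edge deletion by a decoration that forces factorisation. This is where the difficulty lies, because $T_G$ restricted to $\im A$ is not closed under such operations in general.

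As a fallback, I would work invariant-theoretically. The $H_N$-networks for all $N$ determine the orbit of the tuple $(A_G|_{\im A}, T_G, \langle\cdot,\cdot\rangle, \ones)$ under the orthogonal group closure. I would then show that equal invariants from a minor-rich family of cubic graphs yield an isometry, using a Schrijver/Roberson-style argument that the relevant tensor category is generated by any cubic graph containing large clique minors. This would give the bijective map required by \cref{lem:characterisation}.

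Completeness on simple spectrum then follows from \cref{thm:main}.
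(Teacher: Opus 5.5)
Your backward direction, the vertex count from $K_1$, and the interpolation over subdivision lengths of the edges of $H_N$ are all correct. The interpolation gives $c_G(H_N,\eta)=c_{G'}(H_N,\eta)$ for every labelling $\eta:E(H_N)\to\R^\times$. By \cref{lem:characterisation}(c), what remains is to transfer this to an arbitrary connected cubic multigraph $H$, and that is the step you leave open. You say yourself that this is where the difficulty lies. Your fallback, that networks on a cubic family with large clique minors generate the relevant invariants, restates what has to be proved rather than arguing it.

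The mechanisms you sketch also fail as stated. Contraction is not the relevant operation: contracting an edge of a cubic graph creates a degree-four vertex. Moreover, decorating an edge by the identity on $\im A_G$ gives $\sum_{x,y}a_xb_xP(x,y)c_yd_y$, with $P$ the orthogonal projector onto $\im A_G$, not a vertex product unless $G$ is nonsingular. What you need is to delete the edges of $H_N$ outside a subdivision of $H$. A decoration $p(A_G)$ with $p(0)=0$ factorises as $ww^\top$ only when it is a multiple of the projector onto a one-dimensional eigenspace. The all-ones vector is generally not an eigenvector, so it cannot be inserted this way.

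The missing idea is the Perron labelling. Label every edge of $H_N$ outside the subdivision by the largest spectral radius $r$ among the components of $G$ and $G'$. Since $H_N$ is connected, labelled coefficients split over components. Components with $\rho_K<r$ have $P_{r,K}=0$ and drop out; this needs at least one unused edge. For $\rho_K=r$ the projector is $u_Ku_K^\top$ with $u_K$ positive, so the network factorises. Each vertex off the subdivision contributes $\tau_K=\sum_xu_K(x)^3>0$. Each path $R_e$ with $q_e$ internal vertices contributes $M_{\ell(e),K}^{q_e}$, where $M_{\lambda,K}=P_{\lambda,K}\diag(u_K)P_{\lambda,K}$ (\cref{lem:perron-transfer}).

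These residual weights must then be removed. Because $M_{\lambda,K}$ is positive semidefinite with image exactly $E_{\lambda,K}$, a generalised Vandermonde argument recovers $P_{\lambda,K}$ from the powers $M_{\lambda,K}^{q}$, with coefficients independent of $K$. This works only if many values of each $q_e$ can be realised independently with the same branch vertices inside $H_N$. Subdividing edges of $H_N$ cannot supply this, since degree-two vertices carry no $\diag(u_K)$ factor; the paper builds it from chains of diamonds $K_4-ab$ (\cref{lem:independent-path-lengths}). The leftover factors $\tau_K^{|V(H_N)|-|V(H)|}$ differ between components. They are removed by a second interpolation in the unbounded quantity $|V(H_N)|$. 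Then the components of radius $r$ are subtracted and one inducts on the number of components (\cref{lem:maximum-radius-recovery,lem:cubic-coefficient-transfer}). None of these ingredients appears in your proposal, so the forward direction remains unproved.
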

Including $K_1$ in $\calF$ is necessary: $K_2$ and $K_2\sqcup K_1$ have simple
spectrum and equal homomorphism counts from every connected graph with at least two vertices.

The proof of \cref{thm:closure} embeds subdivisions of an arbitrary connected cubic multigraph
in the $H_N$ and interpolates in the lengths of the subdividing paths.
It compares contributions from connected components of $G$ and $G'$
in decreasing order of spectral radius.

\input{incompleteness}
\input{characterisation}
\input{restriction}
\input{open-problems}

\section*{Declarations}

The author used OpenAI GPT-6 Astra and Anthropic Claude Opus 5.5 in
preparing this work. The author formulated the problem and initial
theorem statements. GPT-6 Astra assisted with exploring the mathematics
and identifying relevant literature. The proofs were developed through
discussions between the author and GPT-6 Astra. GPT-6 Astra and Claude
Opus 5.5 both assisted with checking readability and improving the
presentation. The author assumes responsibility for all content.

\bibliographystyle{plain}
\bibliography{references}
\end{document}

%% file: incompleteness.tex
\section{Proof of Theorem~\ref{thm:incompleteness}}
\label{sec:incompleteness}

We use the variant of the CFI construction~\cite{CaiFurerImmerman1992} given
in~\cite[Definition~3.1]{Roberson2026}.
Let $B$ be a connected graph and fix a vertex $v_*$ of $B$, called the
\emph{twist vertex}.
For a vertex $v$ of $B$, the set of edges incident with $v$ is denoted by
$E_B(v)$.
For $i\in\{0,1\}$, let $\CFI_i(B)$ be the graph with
\begin{equation}\label{eq:ss-cfi-definition}
 \begin{aligned}
 \Xi_i(v)&=\left\{\xi\in\mathbb F_2^{E_B(v)}:
   \sum_{e\in E_B(v)}\xi_e=i\delta_{v,v_*}\right\},\\
 V(\CFI_i(B))&=\{(v,\xi):v\in V(B),\xi\in \Xi_i(v)\},\\
 E(\CFI_i(B))&=\{(v,\xi)(w,\zeta):vw\in E(B),\xi\in \Xi_i(v),\zeta\in \Xi_i(w),\xi_{vw}=\zeta_{vw}\}.
 \end{aligned}
\end{equation}
Here $\delta_{ab}$ is the Kronecker delta.
The vertices $(v,\xi)$ with $\xi\in \Xi_i(v)$ form a \emph{fibre} with
\emph{base vertex} $v$, and they are said to lie \emph{over} $v$.
The isomorphism type of $\CFI_1(B)$ is independent of the choice of
$v_*$, and the graphs $\CFI_0(B)$ and $\CFI_1(B)$ are
non-isomorphic~\cite[Lemma~5]{Neuen2024}.

For $i\in\{0,1\}$, let $\pi_i:\CFI_i(B)\to B$ be the projection
$\pi_i(v,\xi)=v$.
Let $F$ be a graph and $\chi:F\to B$ a homomorphism.
A homomorphism $f:F\to\CFI_i(B)$ is a \emph{lift} of $\chi$ if
$\pi_i\circ f=\chi$.
The number of such lifts is denoted by $\hom_\chi(F,\CFI_i(B))$.
By \cite[Theorem~3.6]{Roberson2026}, for every graph $F$ and every homomorphism
$\chi:F\to B$,
\begin{equation}\label{eq:ss-lift-equality}
 \hom(F,\CFI_0(B))=\hom(F,\CFI_1(B))
 \Longrightarrow
\hom_\chi(F,\CFI_0(B))=\hom_\chi(F,\CFI_1(B)).
\end{equation}

For each class $\mathcal C$ in \cref{thm:incompleteness}, we start
from a connected graph $B$ with
$\CFI_0(B)\equiv_{\mathcal C}\CFI_1(B)$.
For each base vertex, we choose a rooted graph and attach a copy at every
vertex lying over that base vertex in each CFI graph.
This preserves indistinguishability over $\mathcal C$
(\cref{sec:cfi-attachments}).
For a subdivided cubic base, we decompose the adjacency matrices of the
two rooted products into the same blocks, one for the whole base and one
for each edge of the original cubic graph (\cref{sec:cfi-blocks}).
We choose tree attachments that make these blocks nonsingular with simple,
pairwise disjoint spectra (\cref{sec:tree-selection}).
Replacing $B$ by such a base containing it as a minor then gives the
witnesses (\cref{sec:witness-construction}).

\subsection{Attaching rooted graphs to CFI graphs}
\label{sec:cfi-attachments}

Attaching a copy of a rooted graph at a vertex means identifying the
root of the copy with that vertex.
Each attached copy includes its root.
For each $z\in V(B)$, let $T_z$ be a rooted graph with root $r_z$.
The \emph{rooted product} $\widehat B$ is the graph obtained by
attaching one copy of $T_z$ at each $z\in V(B)$
and retaining all edges of $B$ \cite[Definition~1.1]{GodsilMcKay1978}.
For $i\in\{0,1\}$, the \emph{CFI-compatible rooted product}
$\widehat{\CFI}_i(B)$ is the graph obtained by attaching a separate copy
of $T_v$ at each $(v,\xi)\in V(\CFI_i(B))$ and retaining all edges of $\CFI_i(B)$.
The isomorphism type of $\widehat{\CFI}_i(B)$ is independent of the choice
of $v_*$.
Indeed, the fibre-preserving isomorphisms between CFI graphs with different
choices of $v_*$~\cite[proof of Lemma~3.2]{Roberson2026} extend to the rooted
products by mapping each attached copy identically onto the copy at the
image of its root.

The projection $\pi_i$ extends to
$\widehat\pi_i:\widehat{\CFI}_i(B)\to\widehat B$, mapping original CFI edges
to edges of $B$ and each attached copy of $T_z$ identically onto the copy
of $T_z$ in $\widehat B$.
See \cref{fig:ss-cfi-example}.

\input{figures/cfi-example}

\begin{lemma}\label{lem:ss-attachments}
Let $\mathcal C$ be a minor-closed graph class and $B$ a connected graph
such that $\CFI_0(B)\equiv_{\mathcal C}\CFI_1(B)$.
For any family of rooted graphs $(T_z)_{z\in V(B)}$, the CFI-compatible
rooted products satisfy
$\widehat{\CFI}_0(B)\equiv_{\mathcal C}\widehat{\CFI}_1(B)$.
\end{lemma}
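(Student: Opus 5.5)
We fix $F\in\mathcal C$ and show $\hom(F,\widehat{\CFI}_0(B))=\hom(F,\widehat{\CFI}_1(B))$. Since homomorphism counts are multiplicative over connected components and $\mathcal C$ is minor-closed (so closed under taking components), we may assume $F$ is connected. The strategy is to group homomorphisms $F\to\widehat{\CFI}_i(B)$ according to their composition with $\widehat\pi_i$, which is a homomorphism $\psi:F\to\widehat B$. Both sides then become sums over the same set of $\psi$. It therefore suffices to show that, for each $\psi$, the number of lifts of $\psi$ (homomorphisms $f$ with $\widehat\pi_i\circ f=\psi$) is independent of $i$.

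Fix $\psi$. Let $U=\psi^{-1}(V(B))$ be the set of vertices of $F$ mapped to vertices of the base, where $V(B)$ is identified with the roots $r_z$ in $\widehat B$. Every vertex of $F\setminus U$ is mapped into some $T_z\setminus\{r_z\}$. Each edge of $F$ is mapped either to an edge of $B$ (with both ends in $U$) or into a single copy $T_z$.

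Consider a connected component $K$ of $F-U$. It maps into a single $T_z\setminus\{r_z\}$, and all its neighbours in $U$ are mapped to $z$. In $\widehat{\CFI}_i(B)$, the attached copies of $T_z$ meet the rest of the graph only at their roots. Hence in any lift, all neighbours of $K$ in $U$ are sent to the same vertex $(z,\xi)$, and the lift on $K$ is then forced to be the copy of $\psi|_K$ in the copy of $T_z$ at $(z,\xi)$.

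Now form the graph $F'$ from $F[U]$ by contracting each set $N(K)\cap U$ to a single vertex, for every component $K$ of $F-U$. Equivalently, contract each connected set $K\cup N(K)$ in $F$ and then delete the non-$U$ vertices. This makes $F'$ a minor of $F$, so $F'\in\mathcal C$. Since all of $N(K)$ maps to one base vertex $z$, $\psi|_U$ induces a map $\chi:F'\to B$. Vertices merged by the contraction share the same $\psi$-value, so no loops arise: edges of $F[U]$ inside a merged set would map to an edge of $B$ between equal vertices, which is impossible. Hence $\chi$ is a homomorphism.

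By the forcing above, lifts of $\psi$ to $\widehat{\CFI}_i(B)$ correspond bijectively to lifts of $\chi$ to $\CFI_i(B)$: the contracted vertices must be identified, and everything outside $U$ is determined. Thus the number of lifts of $\psi$ equals $\hom_\chi(F',\CFI_i(B))$. Since $F'\in\mathcal C$, the hypothesis gives $\hom(F',\CFI_0(B))=\hom(F',\CFI_1(B))$. Then \eqref{eq:ss-lift-equality} yields $\hom_\chi(F',\CFI_0(B))=\hom_\chi(F',\CFI_1(B))$, and summing over $\psi$ finishes the proof.

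The main obstacle is verifying this bijection carefully. There are two points to check. First, vertices of $U$ joined through a component of $F-U$ really are forced to the same CFI vertex, because the only path out of a copy of $T_z$ goes through its root. Second, the iterated contractions commute and merge overlapping neighbourhoods consistently. For the second point, I would define $F'$ as the quotient of $F[U]$ by the equivalence relation generated by "both adjacent to a common component of $F-U$". This presents $F'$ directly as a minor via contracting connected branch sets.
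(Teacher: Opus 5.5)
Your argument follows the paper's route. You group homomorphisms by their projection $\psi$ to $\widehat B$, contract the parts of $F$ mapped into attached copies to obtain a minor $F'$ with a homomorphism $\chi$ to $B$, put lifts of $\psi$ in bijection with lifts of $\chi$, and apply \eqref{eq:ss-lift-equality}. Your equivalence classes on $U$ are exactly the root-containing components of the paper's subgraph $F_\eta$, the edges mapped into attached copies. Your checks that $F'$ is a minor and has no loops are fine.

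There is one case where your bijection is false: $U=\varnothing$. Then the connected graph $F$ maps into a single $T_z-r_z$, and your $F'$ has no vertices, so $\chi$ has exactly one lift. By contrast, $\psi$ has $|\Xi_i(z)|$ lifts, one for each copy of $T_z$ attached at a vertex of the fibre over $z$. Your claim that ``everything outside $U$ is determined'' fails here, because nothing fixes which copy is used. Deleting the non-$U$ vertices is exactly where this information is lost.

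The conclusion still holds, but you must treat this case separately. Take $F'=K_1$ with $\chi(K_1)=z$. This is a minor of $F$, so the hypothesis and \eqref{eq:ss-lift-equality} give $|\Xi_0(z)|=|\Xi_1(z)|$. The paper avoids the case split by contracting every component of $F_\eta$ to a vertex of $F/F_\eta$, including components that avoid the roots. This uniform treatment also makes your reduction to connected $F$ unnecessary.
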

\begin{proof}
Fix $F\in\mathcal C$.
We prove that, for $i\in\{0,1\}$,
\begin{equation}\label{eq:ss-attachment-count}
 \hom(F,\widehat{\CFI}_i(B))=
 \sum_{\eta:F\to\widehat B}\hom_{\chi_\eta}(F/F_\eta,\CFI_i(B)),
\end{equation}
where, for each homomorphism $\eta:F\to\widehat B$, the subgraph $F_\eta$ has
vertex set $V(F)$ and retains an edge of $F$ if and only if $\eta$ maps it
to an edge of one of the attached copies $T_z$ in $\widehat B$.
The homomorphism $\chi_\eta:F/F_\eta\to B$ is defined below.
Every vertex of $\widehat B$ lies in exactly one attached copy, so each
connected component of $F_\eta$ maps into one attached copy.
Consequently, every edge of $F$ whose endpoints lie in that component
belongs to $F_\eta$, so no edge outside $F_\eta$ becomes a loop upon contraction.
Here $F/F_\eta$ denotes the simple graph obtained from $F$ by contracting
each connected component of $F_\eta$ to one vertex and replacing parallel edges by single edges.
Thus $F/F_\eta$ is a minor of $F$.
Since $\mathcal C$ is minor-closed, $F/F_\eta\in\mathcal C$.
Since $\CFI_0(B)\equiv_{\mathcal C}\CFI_1(B)$ and $F/F_\eta\in\mathcal C$,
we have $\hom(F/F_\eta,\CFI_0(B))=\hom(F/F_\eta,\CFI_1(B))$.
Hence
$\hom_{\chi_\eta}(F/F_\eta,\CFI_0(B))
=\hom_{\chi_\eta}(F/F_\eta,\CFI_1(B))$
by \cref{eq:ss-lift-equality}.
Substituting this equality into \cref{eq:ss-attachment-count} proves the lemma.
It remains to define $\chi_\eta$ and to prove
\cref{eq:ss-attachment-count}.

The vertices of $F/F_\eta$ are the connected components of $F_\eta$.
Two distinct components are adjacent in $F/F_\eta$ if and only if an edge of
$F$ has one endpoint in each component.
For each connected component $K$ of $F_\eta$, define $\chi_\eta(K)$ to be
the unique $z\in V(B)$ whose copy of $T_z$ contains $\eta(K)$.
If $vw\in E(F)$ has $v\in K$ and $w\in L$ for distinct components $K,L$, then
$vw\notin E(F_\eta)$. Thus $\eta(v)\eta(w)$ is an edge of $B$ joining the
roots $\chi_\eta(K)$ and $\chi_\eta(L)$.
Hence $\chi_\eta:F/F_\eta\to B$ is a homomorphism.

To prove \cref{eq:ss-attachment-count}, we fix $\eta$ and construct a
bijection between homomorphisms
$f:F\to\widehat{\CFI}_i(B)$ satisfying $\widehat\pi_i\circ f=\eta$
and lifts $h:F/F_\eta\to\CFI_i(B)$ of $\chi_\eta$.
Fix such an $f$. Each connected component $K$ of $F_\eta$ maps
into a single attached copy, since every edge of $K$ projects into some
$T_z$. Let $h(K)$ be the root of this copy.
Then $\pi_i\circ h=\chi_\eta$, and edges between components map to edges
of $\CFI_i(B)$, so $h$ is a lift of $\chi_\eta$.
Conversely, fix a lift $h$ of $\chi_\eta$.
For each component $K$ of $F_\eta$, a copy of $T_{\chi_\eta(K)}$
is attached at $h(K)$ because $\pi_i(h(K))=\chi_\eta(K)$.
The projection $\widehat\pi_i$ maps this copy isomorphically onto
the copy of $T_{\chi_\eta(K)}$ in $\widehat B$.
For each $v\in K$, there is therefore a unique vertex $f(v)$ in the
copy rooted at $h(K)$ such that $\widehat\pi_i(f(v))=\eta(v)$.
Every edge of $F$ within $K$ lies in $F_\eta$ and is therefore preserved
by this isomorphism.
If $vw\in E(F)$ has $v\in K$ and $w\in L$ for distinct components $K,L$, then $\eta(v)$ and
$\eta(w)$ are roots. Hence $f(v)=h(K)$ and $f(w)=h(L)$, which are
adjacent because $h$ is a homomorphism.
The map $f\mapsto h$ and the map $h\mapsto f$ are inverse to each
other, which proves the bijection.
Summing over $\eta:F\to\widehat B$ proves \cref{eq:ss-attachment-count}.
\end{proof}

\subsection{Block-diagonalising the adjacency matrices}
\label{sec:cfi-blocks}

We seek attachments that make the CFI-compatible rooted products
nonsingular with simple spectrum.
Their adjacency matrices decompose into blocks given by weighted rooted
products.
We state this for the base used in the final construction, the subdivision
of a connected cubic graph
(\cref{lem:ss-cfi-blocks}).
Cubicity gives a simple block description, with one block over the
subdivided base and one for each edge of the cubic graph, while
subdivision permits bipartite witnesses of low degree.
It therefore suffices to make these rooted products nonsingular with
simple, pairwise disjoint spectra.

For a graph $\Gamma$, let $\Gamma^{\mathbf2}$ denote the subdivision
with one new vertex on each edge.
The subdivision $\Gamma^{\mathbf2}$ has vertex set $V(\Gamma^{\mathbf2})=V(\Gamma)\sqcup E(\Gamma)$,
where the new vertex on the edge $e$ is denoted by $e$.
Let $\Gamma^{\mathbf2}_0$ be $\Gamma^{\mathbf2}$ with every edge weighted $\sqrt2$, and for
$e=vw\in E(\Gamma)$ let $\Gamma^{\mathbf2}_e$ be the path with edges $ve,ew$,
both of weight $\sqrt2$.
For a family of rooted graphs $(T_z)_{z\in V(\Gamma^{\mathbf2})}$, write
$\widehat{\Gamma^{\mathbf2}_0}$ and $\widehat{\Gamma^{\mathbf2}_e}$ for the rooted products of
these weighted graphs with the corresponding attachments $T_z$.
The rooted products retain the original edge weights, and the edges of
each $T_z$ have weight one.

The following lemma identifies the blocks whose spectra we separate by
choosing rooted attachments.
The proof is based on constant and signed fibre vectors, which Hordan,
Dym, and Seppelt~\cite[Theorem~4]{HordanDymSeppelt2026} used to show that
$\CFI_0$ graphs over cubic bases have repeated eigenvalues.

\begin{lemma}\label{lem:ss-cfi-blocks}
Let $\Gamma$ be a connected cubic graph and let $(T_z)_{z\in V(\Gamma^{\mathbf2})}$
be a family of finite rooted graphs.
For each $i\in\{0,1\}$, the adjacency matrix of $\widehat{\CFI}_i(\Gamma^{\mathbf2})$
is orthogonally similar to
\begin{equation}\label{eq:ss-rooted-blocks}
 A_{\widehat{\Gamma^{\mathbf2}_0}}\oplus\bigoplus_{e\in E(\Gamma)}A_{\widehat{\Gamma^{\mathbf2}_e}}.
\end{equation}

\end{lemma}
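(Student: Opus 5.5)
The plan is to build an explicit orthonormal basis of $\R^{V(\widehat{\CFI}_i(\Gamma^{\mathbf2}))}$ adapted to the fibres. In this basis the adjacency operator visibly splits into the blocks of \cref{eq:ss-rooted-blocks}. Put the twist vertex $v_*$ at an original vertex of $\Gamma$; this is allowed because the isomorphism type of $\widehat{\CFI}_i(\Gamma^{\mathbf2})$ does not depend on the choice of $v_*$.

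In $B=\Gamma^{\mathbf2}$, every original vertex $v\in V(\Gamma)$ has degree three, so its fibre $\Xi_i(v)$ has four elements. Every subdivision vertex $e$ has degree two, and $\xi_{ve}=\xi_{ew}$ is forced, so its fibre has two elements. On the fibre over $v$ I will use the normalised constant vector $c_v=\tfrac12\ones$ and, for each $f\in E_B(v)$, the signed vector $s_{v,f}(\xi)=\tfrac12(-1)^{\xi_f}$. These four vectors are orthonormal: for $f\neq f'$ the product $(-1)^{\xi_f+\xi_{f'}}$ sums to zero over both the even and the odd coset, because fixing the third coordinate's parity still lets $\xi_f$ vary freely. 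On the fibre over $e$ I will use $c_e=\tfrac1{\sqrt2}\ones$ and $s_e(\zeta)=\tfrac1{\sqrt2}(-1)^{\zeta}$. The total count is $4|V(\Gamma)|+2|E(\Gamma)|$, so this is an orthonormal basis of $\R^{V(\CFI_i(B))}$.

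Next I compute $A_{\CFI_i(B)}$ on these vectors. Every vertex $(e,\zeta)$ over $e=vw$ is adjacent to exactly the two vertices of the $v$-fibre with $\xi_{ve}=\zeta$. Hence $A c_v$ restricted to the $e$-fibre equals $\sqrt2\,c_e$, and $A s_{v,ve}$ restricted there equals $\sqrt2\,s_e$. For $f\neq ve$, the two contributing $\xi$ differ in $\xi_f$, so $A s_{v,f}$ vanishes on the $e$-fibre. By symmetry, $Ac_e=\sqrt2(c_v+c_w)$ and $As_e=\sqrt2(s_{v,ve}+s_{w,we})$. Consequently the constant vectors span an invariant subspace on which $A$ is the weighted adjacency matrix of $\Gamma^{\mathbf2}_0$. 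For each $e=vw$, the span of $s_{v,ve},s_e,s_{w,we}$ is invariant with matrix that of the weighted path $\Gamma^{\mathbf2}_e$. Each signed vector $s_{v,f}$ occurs in exactly one such triple, since $f=ve$ determines $e$.

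To include the attachments, identify the vertices over $z$ together with their attached copies with $\R^{\Xi_i(z)}\otimes\R^{V(T_z)}$. Then the adjacency matrix is $I\otimes A_{T_z}$ inside the copies, plus the CFI edges, which act as $A_{\CFI_i(B)}$ tensored with $e_{r_z}e_{r_{z'}}^{\top}$ between root coordinates. Applying the orthogonal change of basis from the previous step on the first tensor factor and the identity on the second, every basis vector $\phi$ over $z$ yields the subspace $\phi\otimes\R^{V(T_z)}$, on which $A_{T_z}$ acts. Root coordinates are coupled exactly as computed above. Thus each invariant subspace becomes the corresponding rooted product $\widehat{\Gamma^{\mathbf2}_0}$ or $\widehat{\Gamma^{\mathbf2}_e}$, and the whole change of basis is orthogonal, giving \cref{eq:ss-rooted-blocks}.

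The main obstacle is the bookkeeping of signs and cancellations in the second step. I must check that the characters remain orthonormal on the odd (twisted) fibre, and that no spurious cross terms arise between different signed vectors at the same $v$. If some coefficients come out as $-\sqrt2$ instead of $\sqrt2$, I will flip the sign of the offending basis vectors. Each block $\Gamma^{\mathbf2}_e$ is a path, hence a tree, so all signs can be absorbed by a diagonal $\pm1$ orthogonal conjugation. In the constant block all coefficients are positive, so no sign correction is needed there.
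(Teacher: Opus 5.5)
Your proposal is correct and follows essentially the same route as the paper: you place the twist at an original vertex, use the same constant and signed fibre bases ($\tfrac12\ones$, $\tfrac12(-1)^{\xi_f}$ over $v$ and $\tfrac1{\sqrt2}\ones$, $\tfrac1{\sqrt2}(-1)^b$ over $e$), tensor this change of basis with the identity on the attachment factor, and obtain the same invariant subspaces, namely the constant vectors and, for each edge $e=vw$, the triple $s_{v,ve},s_e,s_{w,we}$. Your second step already gives every coupling coefficient as $+\sqrt2$, so the sign-correction fallback in your last paragraph is never needed.
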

\Cref{fig:ss-cfi-blocks} shows this decomposition for $\Gamma=K_4$.
\input{figures/cfi-blocks}
\begin{proof}
We prove the decomposition by specifying the corresponding subspaces
and orthonormal bases.
We choose the twist vertex $v_*$ among the original vertices $V(\Gamma)$,
so that every fibre over an inserted subdivision vertex has even parity.
Fix $i\in\{0,1\}$.
For $v\in V(\Gamma)$, the fibre over $v$ is indexed by $\Xi_i(v)$, which
consists of the four vectors in $\mathbb F_2^{E_{\Gamma^{\mathbf2}}(v)}$ of the
prescribed parity.
For each edge $e$ of $\Gamma$ incident with $v$, we write $\xi_e$ for $\xi_{ve}$.
For a subdivision vertex $e$, we have $e\ne v_*$, so its fibre has even parity
and is indexed by
$\Xi_i(e)=\{(0,0),(1,1)\}$; we write $b$ for $(b,b)$.
The vertices of $\widehat{\CFI}_i(\Gamma^{\mathbf2})$ are indexed by $(z,\xi,x)$, with $z\in V(\Gamma^{\mathbf2})$,
$\xi\in \Xi_i(z)$, and $x\in V(T_z)$.
The coordinate $(z,\xi,r_z)$ denotes the original CFI vertex $(z,\xi)$,
where $r_z$ is the root of $T_z$.

On $\Xi_i(v)$, we use the basis $\phi_{v,0}(\xi)=1/2$ and
$\phi_{v,e}(\xi)=(-1)^{\xi_e}/2$ for the three edges $e$ incident with $v$.
On $\Xi_i(e)$, we use the basis $\psi_{e,0}(b)=1/\sqrt2$ and
$\psi_{e,1}(b)=(-1)^b/\sqrt2$.
These bases are orthonormal under either parity constraint.
For any two distinct edges $e,e'$ incident with $v$, the pair
$(\xi_e,\xi_{e'})$ takes each value in $\mathbb F_2^2$ exactly once as
$\xi$ ranges over $\Xi_i(v)$, because parity determines the third coordinate.

For $z\in V(\Gamma^{\mathbf2})$, $g\in\R^{\Xi_i(z)}$, and $x\in V(T_z)$,
let $g[x]\in\R^{V(\widehat{\CFI}_i(\Gamma^{\mathbf2}))}$ be the vector whose
$(z,\xi,x)$-coordinate is $g(\xi)$ for each $\xi\in \Xi_i(z)$,
with all other coordinates zero.
The vectors $g[x]$ obtained from all basis vectors $g$ above and all
attachment positions $x$ form an orthonormal basis of $\R^{V(\widehat{\CFI}_i(\Gamma^{\mathbf2}))}$.
We define subspaces corresponding to $\widehat{\Gamma^{\mathbf2}_0}$
and the graphs $\widehat{\Gamma^{\mathbf2}_e}$.
Let $U_0$ be the subspace of vectors whose coordinates are constant in $\xi$
for each fixed $(z,x)$.
The subspace $U_0$ is spanned by $\phi_{v,0}[x]$ for $v\in V(\Gamma)$
and $x\in V(T_v)$, together with $\psi_{e,0}[x]$ for $e\in E(\Gamma)$
and $x\in V(T_e)$.
For each $e=vw$, let $U_e$ be spanned by $\phi_{v,e}[x]$, $\psi_{e,1}[x]$,
and $\phi_{w,e}[x]$, with $x$ ranging over $V(T_v)$, $V(T_e)$, and $V(T_w)$,
respectively.
These spanning sets partition the orthonormal basis.
For each subspace, we send each of its listed basis vectors $g[x]$ to the standard coordinate vector indexed
by $(z,x)$, where $g\in\R^{\Xi_i(z)}$.
This gives the isometric identifications
\begin{equation}\label{eq:ss-cfi-subspaces}
 \begin{aligned}
 U_0&\cong\R^{V(\widehat{\Gamma^{\mathbf2}_0})}, &
 U_e&\cong\R^{V(\widehat{\Gamma^{\mathbf2}_e})}\text{ for }e\in E(\Gamma).
 \end{aligned}
\end{equation}

We compute the adjacency matrix in this basis, starting with edges within the attached graphs.
For $z\in V(\Gamma^{\mathbf2})$, write $(g_\kappa)_{\kappa=1}^{|\Xi_i(z)|}$ for its fibre basis.
Let $Q_z$ be the matrix whose $\kappa$th column is $g_\kappa$, so
$(Q_z)_{\xi\kappa}=g_\kappa(\xi)$ for $\xi\in \Xi_i(z)$.
The subgraph of $\widehat{\CFI}_i(\Gamma^{\mathbf2})$ induced by the vertices $(z,\xi,x)$ with
$\xi\in \Xi_i(z)$ and $x\in V(T_z)$ is the disjoint union of $|\Xi_i(z)|$ copies
of $T_z$. Its adjacency matrix in coordinates $(\xi,x)$ is $I\otimes A_{T_z}$,
because the same graph is attached at every vertex of the fibre.
The simultaneous change of basis at all attachment positions is represented by
$Q_z\otimes I$, so
\begin{equation}\label{eq:ss-tree-blocks}
 (Q_z\otimes I)^{\top}(I\otimes A_{T_z})(Q_z\otimes I)
 =(Q_z^{\top}Q_z)\otimes A_{T_z}
 =I\otimes A_{T_z}.
\end{equation}
Thus the adjacency matrix of this induced subgraph has one block $A_{T_z}$
on each set $\{g_\kappa[x]:x\in V(T_z)\}$, with zero entries between these sets.
Each set lies in one of the subspaces $U_0,U_e$.

For an incidence $v\in e$, the root vertices
$(v,\xi,r_v)$ and $(e,b,r_e)$ are adjacent exactly when $\xi_e=b$.
Let $R_{v,e}$ be the matrix $R_{v,e}(\xi,b)=\delta_{\xi_e,b}$ of
this adjacency. It satisfies
\begin{equation}\label{eq:ss-fourier}
 \begin{aligned}
 R_{v,e}\psi_{e,0}&=\sqrt2\phi_{v,0}, &
 R_{v,e}\psi_{e,1}&=\sqrt2\phi_{v,e},
 \end{aligned}
\end{equation}
and, by orthonormality, $R_{v,e}^{\top}$ sends all other vectors
in the chosen basis of $\R^{\Xi_i(v)}$ to zero.
Thus the matrix of $R_{v,e}$ in these fibre bases has entry $\sqrt2$
at the row--column pairs $(\phi_{v,0},\psi_{e,0})$ and
$(\phi_{v,e},\psi_{e,1})$, and all other entries are zero.
Hence, in the stated bases of $U_0$ and $U_e$,
the submatrices indexed by root coordinates are $A_{\Gamma^{\mathbf2}_0}$ and $A_{\Gamma^{\mathbf2}_e}$,
respectively.

Combining these root-coordinate matrices with the attachment blocks in
\cref{eq:ss-tree-blocks} shows that the subspaces $U_0$ and $U_e$
are invariant and that the adjacency matrix of $\widehat{\CFI}_i(\Gamma^{\mathbf2})$ has the block-diagonal form
\eqref{eq:ss-rooted-blocks} in the stated bases.
\end{proof}

We therefore seek one family $(T_z)_{z\in V(\Gamma^{\mathbf2})}$ of rooted
graphs that makes $\widehat{\Gamma^{\mathbf2}_0}$ and all
$\widehat{\Gamma^{\mathbf2}_e}$ nonsingular with simple, pairwise disjoint spectra.

\subsection{Choosing tree attachments for simple and disjoint spectra}
\label{sec:tree-selection}

We choose attachments from a recursively constructed family of rooted trees
whose characteristic polynomials allow us to make the block spectra
simple and pairwise disjoint.

A \emph{rooted subcubic caterpillar} is a tree obtained from a nonempty path
by attaching at most one new leaf to each path vertex and choosing an
endpoint of the path as the root.
Thus the maximum degree is at most three and
the root degree is at most two.
The family $\mathcal T$ consists of the rooted subcubic caterpillars
obtained from $K_2$ rooted at an endpoint by zero or more applications
of the following two operations.
For a tree with root $r$, operation $X$ adds two new vertices $r',v$
and the edges $r'v,vr$, making $r'$ the new root.
Operation $Y$ adds a new root $r'$ and a new leaf $v$, with edges
$rr',r'v$ (\cref{fig:ss-operations}).

\begin{figure}[t]
\centering
\begin{tikzpicture}[
  line width=1pt,
  old/.style={circle,fill,inner sep=1.6pt},
  new/.style={circle,draw,fill=white,inner sep=1.6pt},
  tree/.style={draw,fill=black!8},
]
\begin{scope}
  \draw[tree] (0,0) -- (1.6,0.7) -- (1.6,-0.7) -- cycle;
  \node at (1.1,0) {$T$};
  \node[old,label=above:$r$] (r) at (0,0) {};
  \node[new] (u) at (-1,0) {};
  \node[new,label=above:$r'$] (s) at (-2,0) {};
  \draw (s) -- (u) -- (r);
  \draw (s) circle (4.5pt);
  \node at (-0.2,-1.3) {Operation $X$};
\end{scope}
\begin{scope}[xshift=5.5cm]
  \draw[tree] (0,0) -- (1.6,0.7) -- (1.6,-0.7) -- cycle;
  \node at (1.1,0) {$T$};
  \node[old,label=above:$r$] (r) at (0,0) {};
  \node[new,label=above:$r'$] (s) at (-1,0) {};
  \node[new] (l) at (-1,-0.8) {};
  \draw (l) -- (s) -- (r);
  \draw (s) circle (4.5pt);
  \node at (0.3,-1.3) {Operation $Y$};
\end{scope}
\end{tikzpicture}
\caption{The operations $X$ and $Y$ applied to a rooted subcubic
caterpillar $T$ with root $r$. New vertices are white, and the new
root $r'$ is circled.}
\label{fig:ss-operations}
\end{figure}

We compute the characteristic polynomial of each $T\in\mathcal T$
recursively along its construction.
For the root $r$ of $T$, let $p_T(\lambda)=\det(\lambda I-A_T)$ and
$q_T(\lambda)=\det(\lambda I-A_{T-r})$ be the characteristic polynomials of $T$ and
$T-r$, respectively.
For a nonzero $\xi=(p,q)^{\top}\in\R^2$, let
$[\xi]=[p:q]\in\mathbb P^1(\R)$ denote its span, identified with $p/q$ when $q\ne0$
and with $\infty$ when $q=0$.
For each $T\in\mathcal T$, put
$h_T(\lambda)=[p_T(\lambda):q_T(\lambda)]$.
We first show that these points are defined for every real $\lambda$, that is,
$p_T(\lambda)$ and $q_T(\lambda)$ never vanish together.
The starting pair for rooted $K_2$ is
$(p_{K_2}(\lambda),q_{K_2}(\lambda))=(\lambda^2-1,\lambda)$.
By cofactor expansion, the operations $X$ and $Y$
transform $(p_T(\lambda),q_T(\lambda))^{\top}$ by multiplication
by the respective matrices
\begin{equation}\label{eq:ss-recurrence}
 \begin{aligned}
 \mathbf{X}_\lambda&=\begin{pmatrix}\lambda^2-1&-\lambda\\\lambda&-1\end{pmatrix},
 &\mathbf{Y}_\lambda&=\begin{pmatrix}\lambda^2-1&-\lambda\\\lambda&0\end{pmatrix}.
 \end{aligned}
\end{equation}
At $\lambda=0$, the recurrence \eqref{eq:ss-recurrence} gives
$p_T(0)=(-1)^{|V(T)|/2}$ and $q_T(0)=0$.
For $\lambda\ne0$, the starting pair is nonzero and both
matrices in \cref{eq:ss-recurrence} are invertible, so they carry
nonzero pairs to nonzero pairs.
Thus $h_T(\lambda)$ is defined for every real $\lambda$.

We next use these projective values to determine the eigenvalue
multiplicities of the rooted-product blocks in \cref{eq:ss-rooted-blocks}.
The following lemma allows real edge weights in the base and gives a
kernel-dimension analogue of the determinant formula for unweighted rooted
products in \cite[Theorem~2.1]{GodsilMcKay1978}.

\begin{lemma}\label{lem:ss-rooted-multiplicity}
Let $H$ be a graph with real edge weights.
For each $z\in V(H)$, let $T_z\in\mathcal T$, and let $\widehat H$ be the
rooted product of $H$ with $(T_z)_{z\in V(H)}$.
For $\lambda\in\R$, define
\begin{equation}\label{eq:ss-rooted-core}
 K_H^\lambda=\diag(p_{T_z}(\lambda))_{z\in V(H)}
       -A_H\diag(q_{T_z}(\lambda))_{z\in V(H)}.
\end{equation}
Then, for every $\lambda\in\R$,
\begin{equation}\label{eq:ss-multiplicity}
 \dim\ker(\lambda I-A_{\widehat H})=\dim\ker K_H^\lambda.
\end{equation}
\end{lemma}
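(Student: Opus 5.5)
The plan is to solve the eigenvalue equation tree by tree. Each attached tree then contributes exactly a one‑parameter family of local solutions, and the coupling through the edges of $H$ becomes the linear system $K_H^\lambda t=0$.

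Fix $\lambda\in\R$. Write a vector $x\in\R^{V(\widehat H)}$ as $(x_z)_{z\in V(H)}$ with $x_z\in\R^{V(T_z)}$. The equation $(\lambda I-A_{\widehat H})x=0$ has two parts. Inside each $T_z$ it reads
\[
(\lambda I-A_{T_z})x_z=c_z e_{r_z},\qquad c_z=\sum_{w}(A_H)_{zw}\,x_w(r_w).
\]
The second identity is the root equation, in which the base edges contribute only at roots. For a rooted tree $T\in\mathcal T$ with root $r$, define the local solution space
\[
S_T=\{(x,c)\in\R^{V(T)}\times\R:(\lambda I-A_T)x=c\,e_r\}.
\]

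\emph{Key claim.} For every $T\in\mathcal T$ and every real $\lambda$, $S_T$ is one‑dimensional. Moreover, the map $(x,c)\mapsto(x(r),c)$ is injective on $S_T$, and its image is the line spanned by $(q_T(\lambda),p_T(\lambda))$. This line is well defined because $p_T$ and $q_T$ never vanish together, as shown above.

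Granting the claim, write each local solution as $t_z$ times a fixed generator, normalised so that $x_z(r_z)=t_zq_{T_z}(\lambda)$ and $c_z=t_zp_{T_z}(\lambda)$. Then every $x\in\ker(\lambda I-A_{\widehat H})$ determines a unique $t\in\R^{V(H)}$. The coupling condition $c_z=\sum_w(A_H)_{zw}x_w(r_w)$ becomes
\[
p_{T_z}(\lambda)\,t_z=\sum_w (A_H)_{zw}\,q_{T_w}(\lambda)\,t_w,
\]
that is, $K_H^\lambda t=0$ with $K_H^\lambda$ as in \eqref{eq:ss-rooted-core}. Conversely, every $t\in\ker K_H^\lambda$ yields an eigenvector by gluing the local solutions. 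The map $x\mapsto t$ is linear and injective by the key claim, and surjective by the gluing, which gives \eqref{eq:ss-multiplicity}. This argument works for arbitrary real edge weights in $H$, since the weights enter only through the coupling.

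I would prove the key claim by induction along the construction of $T$ by the operations $X$ and $Y$.

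\emph{Base case.} For rooted $K_2$ with leaf $v$, the equations $\lambda x_v=x_r$ and $\lambda x_r-x_v=c$ leave $x_v$ free. They give $(x_r,c)=x_v(\lambda,\lambda^2-1)=x_v(q,p)$, and $x_v\neq0$ whenever the solution is nonzero.

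\emph{Operation $Y$.} Let the new root be $r'$, the new leaf $v$, and the old root $r$. On the old tree the equations say $(x_T,x_{r'})\in S_T$, so $(x(r),x_{r'})=s(q_T,p_T)$ for a parameter $s$. The leaf gives $\lambda x_v=x_{r'}$, and the new root gives $c'=\lambda x_{r'}-x_v-x(r)$.
\begin{itemize}
\item For $\lambda\neq0$, eliminating $x_v=sp_T/\lambda$ leaves $s$ as the single parameter. The pair $(x_{r'},c')$ is then proportional to $\mathbf Y_\lambda(p_T,q_T)^\top$ with the coordinates swapped, matching $(q_{T'},p_{T'})$ up to the scalar $\lambda$.
\item For $\lambda=0$, the relation $p_T(0)=\pm1$ forces $s=0$, so $x_T=0$ by injectivity, and $x_v$ becomes the free parameter. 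This gives $(x_{r'},c')=(0,-x_v)$, consistent with $q_{T'}(0)=0$.
\end{itemize}

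\emph{Operation $X$.} This is handled the same way along the path $r'\!-\!v\!-\!r$. The middle vertex $v$ now plays the role of the coupling vertex for the old tree, and the new pair is governed by $\mathbf X_\lambda$.

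The main obstacle is the degenerate values of $\lambda$. These are $\lambda=0$ and the eigenvalues of $T$ or of $T-r$, where the naive Schur‑complement formula $x(r)=c\,q_T/p_T$ breaks down. The inductive formulation above is designed to avoid dividing by $p_T$: I would track the pair $(x(r),c)$ directly and use that the transfer matrices are invertible for $\lambda\neq0$. I would then check $\lambda=0$ separately using $p_T(0)=\pm1$ and $q_T(0)=0$. In particular, one must verify in each step that no nonzero local solution has $x(r)=c=0$. This is exactly the statement that $T$ has no eigenvector vanishing at its root, and I expect that verification to carry most of the work.
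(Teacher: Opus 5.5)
Your argument is correct, and it takes a genuinely different route from the paper's proof.

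\textbf{The paper's route.} The paper never parametrises eigenvectors. It sets $J=\{z:q_{T_z}(\lambda)\neq0\}$ and first deletes the copies $T_z$ with $z\notin J$ by a Schur complement of $\lambda I-A_{T_z}$. This block is invertible because $p_{T_z}(\lambda)\neq0$, and the correction term $b_z(q_z/p_z)b_z^{\top}$ vanishes. It then takes the Schur complement of the non-root block $\bigoplus_{z\in J}(\lambda I-A_{T_z-r_z})$. This shows that $\lambda I-A_{\widehat H}$ and $K_H^\lambda$ both have the nullity of $\Sigma=\diag(p_z/q_z)_{z\in J}-A_H[J,J]$.

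\textbf{Your route.} You glue one-dimensional local solution spaces. This gives an explicit linear isomorphism $\ker(\lambda I-A_{\widehat H})\to\ker K_H^\lambda$ rather than an equality of dimensions. Your homogeneous pair $(q_{T_z},p_{T_z})$ treats $q_z=0$ and $q_z\neq0$ uniformly, so you avoid the $J$ versus $J^c$ split. It also makes transparent why only the projective value $h_{T_z}(\lambda)$ matters.

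\textbf{The induction is unnecessary.} The induction over $X$ and $Y$ is valid but is more than you need. In particular, the step you expected to carry most of the work is immediate. Your key claim holds for every rooted graph $T$ with $(p_T(\lambda),q_T(\lambda))\neq(0,0)$:
\begin{itemize}
\item If $p_T(\lambda)\neq0$, Cramer's rule gives $x=c(\lambda I-A_T)^{-1}e_r$ and $x(r)=c\,q_T(\lambda)/p_T(\lambda)$. So $c$ parametrises $S_T$, and the image is the line through $(q_T,p_T)$.
\item If $p_T(\lambda)=0$, then $q_T(\lambda)\neq0$, so $\lambda I-A_{T-r}$ is invertible. A kernel vector of $\lambda I-A_T$ vanishing at $r$ would restrict to a kernel vector of $\lambda I-A_{T-r}$, so it must be zero. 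Hence $\ker(\lambda I-A_T)$ is one-dimensional and its nonzero vectors are nonzero at $r$. By symmetry, $e_r\notin\im(\lambda I-A_T)=\ker(\lambda I-A_T)^{\perp}$. This forces $c=0$, so $S_T=\ker(\lambda I-A_T)\times\{0\}$, with image spanned by $(1,0)\propto(q_T,p_T)$.
\end{itemize}
This direct version needs no case check at $\lambda=0$ for operation $Y$. It has exactly the generality of the paper's argument, which likewise uses only that $p_{T_z}$ and $q_{T_z}$ have no common real zero, and not the specific structure of $\mathcal T$.
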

\begin{proof}
Fix $\lambda\in\R$ and put
$p_z=p_{T_z}(\lambda)$ and $q_z=q_{T_z}(\lambda)$.
Let $J=\{z\in V(H):q_z\ne0\}$ and $J^c=V(H)\setminus J$, and define
\begin{equation}\label{eq:ss-schur-core}
 \Sigma=\diag(p_z/q_z)_{z\in J}-A_{H}[J,J],
\end{equation}
where $A_{H}[J,J]$ is the principal submatrix indexed by $J$.
When $J=\varnothing$, we have $\dim\ker \Sigma=0$.
We will show that both sides of \cref{eq:ss-multiplicity} equal
$\dim\ker \Sigma$.

We first consider $K_H^\lambda$.
For $z\in J^c$, we have $q_z=0$ and hence $p_z\ne0$, because the two
polynomials defining $h_{T_z}$ have no common real zero.
Thus column $z$ has the single
nonzero entry $p_z$ in row $z$.
Ordering $J$ before $J^c$ therefore gives
\begin{equation}\label{eq:ss-core-partition}
 K_H^\lambda=
 \begin{pmatrix}
 K_H^\lambda[J,J]&0\\
 K_H^\lambda[J^c,J]&\diag(p_z)_{z\in J^c}
 \end{pmatrix}.
\end{equation}
Since the lower-right block is invertible,
$\dim\ker K_H^\lambda=\dim\ker K_H^\lambda[J,J]$.
Also $K_H^\lambda[J,J]=\Sigma\diag(q_z)_{z\in J}$ with every $q_z\ne0$,
so $\dim\ker K_H^\lambda=\dim\ker \Sigma$.

We next prove $\dim\ker(\lambda I-A_{\widehat H})=\dim\ker \Sigma$.
Let $\widehat{H[J]}$ be the rooted product of the induced base graph
$H[J]$ with $(T_z)_{z\in J}$.
We first prove the deletion identity
\begin{equation}\label{eq:ss-tree-deletion}
 \dim\ker(\lambda I-A_{\widehat H})
 =\dim\ker(\lambda I-A_{\widehat{H[J]}}).
\end{equation}
To prove \cref{eq:ss-tree-deletion}, we delete from $\widehat H$
the whole copies $T_z$ with $z\in J^c$, including their roots, one at a time.
At each step, let $L$ be the remaining induced subgraph of $\widehat H$.
For the copy $T_z$ to be removed, put $D_z=\lambda I-A_{T_z}$ and
$R_z=\lambda I-A_{L-V(T_z)}$.
Let $e_z$ be the root coordinate vector in $\R^{V(T_z)}$, and let
$b_z$ be the vector of edge weights from the root to $V(L)\setminus V(T_z)$.
Ordering $V(T_z)$ first gives
\begin{equation}\label{eq:ss-whole-tree-block}
 \lambda I-A_L=
 \begin{pmatrix}D_z&-e_zb_z^{\top}\\-b_ze_z^{\top}&R_z\end{pmatrix}.
\end{equation}
Here $D_z$ is invertible because $\det D_z=p_z\ne0$.
Cramer's rule gives $e_z^{\top}D_z^{-1}e_z=q_z/p_z=0$.
Thus the Schur complement of $D_z$ in \cref{eq:ss-whole-tree-block} is
\begin{equation}\label{eq:ss-zero-tree-correction}
 R_z-b_z(e_z^{\top}D_z^{-1}e_z)b_z^{\top}
 =R_z-\frac{q_z}{p_z}b_zb_z^{\top}=R_z.
\end{equation}
Since $D_z$ is invertible,
$\dim\ker(\lambda I-A_L)=\dim\ker R_z$.
The remaining matrix is exactly $R_z=\lambda I-A_{L-V(T_z)}$,
so the same calculation applies after each deletion.
Deleting all copies indexed by $J^c$ therefore proves
\cref{eq:ss-tree-deletion}.

It remains to show that
$\dim\ker(\lambda I-A_{\widehat{H[J]}})=\dim\ker \Sigma$.
For $z\in J$, put $E_z=\lambda I-A_{T_z-r_z}$, and let
$c_z$ be the vector of edge weights from $r_z$ to the non-root vertices
of $T_z$.
Let $E=\bigoplus_{z\in J}E_z$, and let $C$ have rows indexed by these
non-root vertices and columns indexed by $J$.
Column $z$ of $C$ equals $c_z$ on $V(T_z)\setminus\{r_z\}$ and is
zero elsewhere.
Ordering all non-root vertices before the roots gives
\begin{equation}\label{eq:ss-nonroot-block}
 \lambda I-A_{\widehat{H[J]}}=
 \begin{pmatrix}E&-C\\-C^{\top}&\lambda I-A_{H}[J,J]\end{pmatrix}.
\end{equation}
Every $E_z$ is invertible because $\det E_z=q_z\ne0$.
The determinant formula for the Schur complement of $E_z$ in
$\lambda I-A_{T_z}$ gives
$p_z=q_z(\lambda-c_z^{\top}E_z^{-1}c_z)$.
Since distinct columns of $C$ are supported on distinct blocks of $E$,
the Schur complement of $E$ in \cref{eq:ss-nonroot-block} is
\begin{equation}\label{eq:ss-root-schur}
 \lambda I-A_{H}[J,J]-C^{\top}E^{-1}C
 =\diag(p_z/q_z)_{z\in J}-A_{H}[J,J]=\Sigma.
\end{equation}
Since $E$ is invertible, \cref{eq:ss-tree-deletion,eq:ss-root-schur} give
\begin{equation}\label{eq:ss-kernel-reduction}
 \dim\ker(\lambda I-A_{\widehat H})
 =\dim\ker \Sigma
 =\dim\ker K_H^\lambda,
\end{equation}
which proves \cref{eq:ss-multiplicity}.
\end{proof}

By \cref{lem:ss-rooted-multiplicity}, simple and pairwise disjoint block spectra
require the sum of the nullities of the corresponding matrices $K_H^\lambda$
to be at most one for every $\lambda\in\R$.
These nullities depend only on the projective attachment values
$h_{T_z}(\lambda)$.
Choosing a tree $T$ fixes the whole function $\lambda\mapsto h_T(\lambda)$,
so one tree must avoid the forbidden values at every relevant $\lambda$ at once.

\begin{lemma}\label{lem:ss-caterpillars}
For any finite list of pairs
$(\lambda_j,a_j)\in\R^\times\times\mathbb P^1(\R)$, there is a tree
$T\in\mathcal T$ with $h_T(\lambda_j)\ne a_j$
for every $j$.
\end{lemma}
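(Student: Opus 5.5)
The plan is probabilistic: choose the construction word at random and show that, for each fixed pair, the chance of hitting the forbidden value tends to zero as the word gets longer. Every $T\in\mathcal T$ arises from rooted $K_2$ by a word $c_m\cdots c_1$ in the letters $X,Y$, applied $c_1$ first. Hence $h_T(\lambda)$ is the image of $s_\lambda=[\lambda^2-1:\lambda]$ under the composition $\mathbf{C}_{m,\lambda}\cdots\mathbf{C}_{1,\lambda}$ of the projective maps induced by \cref{eq:ss-recurrence}. For $\lambda\neq0$ both matrices are invertible ($\det\mathbf X_\lambda=1$, $\det\mathbf Y_\lambda=\lambda^2$), so they act as Möbius bijections of $\mathbb P^1(\R)$. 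Choose the $m$ letters independently and uniformly at random, and let $\mu_m^{(\lambda)}$ be the (finitely supported) law of the resulting point $h_T(\lambda)$. It suffices to show that for each fixed $\lambda\neq0$,
\[
 g_m(\lambda):=\max_{a\in\mathbb P^1(\R)}\mu_m^{(\lambda)}(\{a\})\longrightarrow0\qquad(m\to\infty).
\]
Then for $m$ large $\sum_j\mu_m^{(\lambda_j)}(\{a_j\})<1$, so some word of length $m$ avoids every pair simultaneously. This union bound is exactly what handles the finite list.

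\emph{Monotonicity.} Since the last letter is applied outermost,
\[
\mu_{m+1}(\{a\})=\tfrac12\mu_m(\{\mathbf X^{-1}a\})+\tfrac12\mu_m(\{\mathbf Y^{-1}a\}),
\]
so $g_{m+1}\le g_m$ and $g_m\downarrow g\ge0$. Suppose $g>0$. I would like to take $S$ to be the set of points that eventually carry an atom of mass about $g$; this set has at most $1/g$ elements. If $a$ has mass near $g$ at time $m+1$, then both $\mathbf X^{-1}a$ and $\mathbf Y^{-1}a$ must have mass near $g$ at time $m$. Iterating, one should get a nonempty finite set $S$ with $\mathbf X^{-1}S\cup\mathbf Y^{-1}S\subseteq S$. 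Since these are bijections and $S$ is finite, this forces $\mathbf XS=\mathbf YS=S$. Making ``near $g$'' precise is the step I expect to be the main technical obstacle. I would handle it with a compactness/pigeonhole argument: fix $\varepsilon\ll g$, look at the at most $1/(g-\varepsilon)$ heavy atoms at each time, and pass to a subsequence of times on which the heavy sets stabilise in a suitable sense. Alternatively, I would use the fact that an atom of mass exactly $g_{m+1}$ at time $m+1$ forces both preimages to have mass exactly $g_m$ whenever $g_{m+1}=g_m$. Then I would argue that strict decrease must occur within a bounded number of steps, uniformly bounded away from zero, unless a finite jointly invariant set exists.

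\emph{No finite invariant set.} This step is pure computation. A direct calculation gives
\[
\mathbf X_\lambda^{-1}\mathbf Y_\lambda=\begin{pmatrix}1&\lambda\\0&\lambda^2\end{pmatrix},
\]
which acts as the affine map $x\mapsto(x+\lambda)/\lambda^2$. If $\lambda^2\neq1$, this is a nontrivial scaling about $x_0=\lambda/(\lambda^2-1)$ fixing $\infty$, so every finite invariant set lies in $\{\infty,x_0\}$. If $\lambda^2=1$, it is the translation $x\mapsto x\pm1$, whose only finite invariant set lies in $\{\infty\}$. A set $S$ invariant under both $\mathbf X$ and $\mathbf Y$ is also invariant under $\mathbf X^{-1}\mathbf Y$, hence contained in these points. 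But $\mathbf X(\infty)=(\lambda^2-1)/\lambda\neq\infty$ for $\lambda\neq0$, and a short check shows that $\mathbf X$ does not fix $x_0$, nor swap $x_0$ with $\infty$. So no nonempty such $S$ exists, contradicting $g>0$. Hence $g_m(\lambda_j)\to0$ for each $j$, and the union bound produces the required $T\in\mathcal T$. The hypothesis $\lambda_j\neq0$ enters only through the invertibility of $\mathbf Y_\lambda$ and the nondegeneracy of $\mathbf X^{-1}\mathbf Y$.
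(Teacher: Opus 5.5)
Your reduction to anti-concentration is sound: it suffices to show $g_m(\lambda)\to0$ and then apply the union bound to a single random word. The monotonicity $g_{m+1}\le g_m$ is correct. So is your final computation: $\mathbf X_\lambda^{-1}\mathbf Y_\lambda$ acts by $t\mapsto(t+\lambda)/\lambda^2$, its finite invariant sets lie in $F_\lambda=\mathrm{Fix}(\mathbf X_\lambda^{-1}\mathbf Y_\lambda)$, which is $\{\infty,x_0\}$ or $\{\infty\}$, and $\mathbf X_\lambda$ has no nonempty invariant subset of $F_\lambda$.

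\textbf{The gap.} The gap is the step from $g>0$ to one finite set $S$ with $\mathbf X^{-1}S\cup\mathbf Y^{-1}S\subseteq S$. Your heuristic only relates heavy atoms at consecutive times: it gives $\mathbf X^{-1}S_{m+1}\cup\mathbf Y^{-1}S_{m+1}\subseteq S_m$ with $S_m$ depending on $m$, and nothing forces $S_m$ to stabilise. In fact no argument of this kind can produce a jointly invariant set. Take $X=Y$ to be the shift $n\mapsto n+1$ on $\Z$. Then $\mu_m=\delta_{s+m}$ keeps an atom of mass $1$ forever, yet no nonempty finite set is invariant. Passing to a subsequence of times does not help, since the inclusions hold only between consecutive times. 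Your alternative via $g_{m+1}=g_m$ gives nothing, because $g_m$ may decrease strictly to $g>0$. Once the sizes stabilise, what the argument actually yields is $S_{m+1}=\mathbf XS_m=\mathbf YS_m$. So each $S_m$ is invariant under $\mathbf X^{-1}\mathbf Y$, but not under $\mathbf X$ and $\mathbf Y$. Consecutive heavy sets can genuinely differ here: $\mathbf X\infty=\mathbf Y\infty$ always, and this point equals $x_0$ when $\lambda^2-1=\pm\lambda$.

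\textbf{A repair.} The route can be fixed using only the invariance under $\mathbf X^{-1}\mathbf Y$. Since pushforwards by bijections preserve $\ell^2$ norms, you have $\|\mu_m\|_2^2-\|\mu_{m+1}\|_2^2=\tfrac14\|\mathbf X_*\mu_m-\mathbf Y_*\mu_m\|_2^2$. Hence $\delta_m=\|\mu_m-(\mathbf X^{-1}\mathbf Y)_*\mu_m\|_2\to0$.
\begin{itemize}
\item Every point outside $F_\lambda$ has an infinite orbit under $\mathbf X^{-1}\mathbf Y$. Comparing masses along $k+1$ consecutive orbit points gives $\mu_m(a)\le 1/(k+1)+k\delta_m/2$ for $a\notin F_\lambda$. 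So these masses tend to $0$ uniformly.
\item One step of the recursion then disposes of $F_\lambda$. The preimages $\mathbf X^{-1}\infty=1/\lambda$ and $\mathbf Y^{-1}\infty=0$ lie outside $F_\lambda$, and $\mathbf X^{-1}x_0$, $\mathbf Y^{-1}x_0$ differ from $x_0$.
\end{itemize}

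\textbf{Comparison with the paper.} With this repair your probabilistic route works, but it is longer than the paper's argument. The paper avoids any limiting process. It linearises all the avoidance conditions in $\bigotimes_j\R^2$ and uses Cayley--Hamilton to allow inverse letters. It then checks the single explicit word $(\mathbf X^{-1}\mathbf Y)^m\mathbf Y^{-2}$, whose values $\lambda^{-1}\sum_{k<m}\lambda^{-2k}$ are strictly monotone in $m$.
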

\begin{proof}
Let $\omega$ be a finite composition of the construction operations
$X$ and $Y$, and let $T=\omega(K_2)$.
For each $j$, let $M_{\omega,j}$ be the matrix product obtained by
replacing each $X$ by $\mathbf{X}_{\lambda_j}$, each $Y$ by
$\mathbf{Y}_{\lambda_j}$, and composition by matrix multiplication.
For the empty composition, $M_{\omega,j}=I$.
Let $\nu_j=(p_{K_2}(\lambda_j),q_{K_2}(\lambda_j))^{\top}
=(\lambda_j^2-1,\lambda_j)^{\top}$.
Then $(p_T(\lambda_j),q_T(\lambda_j))^{\top}=M_{\omega,j}\nu_j$
by \cref{eq:ss-recurrence}.
Therefore, it suffices to find a composition $\omega$ such that
$[M_{\omega,j}\nu_j]\ne a_j$ for every $j$.

We express the conditions $[M_{\omega,j}\nu_j]\ne a_j$ for all $j$
as the nonvanishing of a linear functional on a tensor product.
For each $j$, define the linear functional
$\ell_j:\R^2\to\R$ by
\begin{equation}\label{eq:ss-avoidance-factors}
 \ell_j(p,q)=
 \begin{cases}
  p-a_jq,&a_j\in\R,\\
  q,&a_j=\infty.
 \end{cases}
\end{equation}
Then $\ell_j(M_{\omega,j}\nu_j)\ne0$ exactly when
$h_T(\lambda_j)\ne a_j$.
Let
$\nu=\bigotimes_j\nu_j$, $\ell=\bigotimes_j\ell_j$,
$\mathbf{X}=\bigotimes_j\mathbf{X}_{\lambda_j}$, and
$\mathbf{Y}=\bigotimes_j\mathbf{Y}_{\lambda_j}$,
where $\otimes$ is the tensor product.
Define $M_\omega$ in the same way using $\mathbf{X},\mathbf{Y}$.
Multiplication of tensor products gives
$M_\omega=\bigotimes_jM_{\omega,j}$.
Since $\ell$ is a tensor product of functionals,
\begin{equation}\label{eq:ss-positive-word}
 \ell(M_\omega\nu)=\prod_j\ell_j(M_{\omega,j}\nu_j).
\end{equation}
Thus the avoidance conditions hold exactly when $\ell(M_\omega\nu)\ne0$.

It suffices to find a finite product $W$ of the matrices
$\mathbf{X}^{\pm1},\mathbf{Y}^{\pm1}$ with $\ell(W\nu)\ne0$.
Since every $\lambda_j\ne0$, the matrices $\mathbf{X},\mathbf{Y}$ are
invertible. By the Cayley--Hamilton theorem, their inverses are
polynomials in $\mathbf{X}$ and $\mathbf{Y}$, respectively.
Substituting these polynomial expressions into $W$ and expanding gives
a finite sum $W=\sum_i c_iM_{\omega_i}$, where $c_i\in\R$ and each
$\omega_i$ is a composition of the construction operations $X$ and $Y$.
Consequently, $\ell(W\nu)=\sum_i c_i\ell(M_{\omega_i}\nu)\ne0$
implies $\ell(M_{\omega_i}\nu)\ne0$ for some $i$.
By \cref{eq:ss-positive-word}, the tree $T=\omega_i(K_2)$ then satisfies
$h_T(\lambda_j)\ne a_j$ for every $j$.

We now construct such a product $W$.
For $\lambda\ne0$, the matrices in \cref{eq:ss-recurrence} act on
$\mathbb P^1(\R)$ by multiplication of vector representatives.
The product $\mathbf{X}_\lambda^{-1}\mathbf{Y}_\lambda$ acts by
$t\mapsto\lambda^{-2}t+\lambda^{-1}$.
The starting value is $h_{K_2}(\lambda)=\lambda-\lambda^{-1}=\mathbf{Y}_\lambda\infty$.
Since $\mathbf{Y}_\lambda^{-1}\infty=0$, we have
$\mathbf{Y}_\lambda^{-2}h_{K_2}(\lambda)=0$.
Thus $(\mathbf{X}_\lambda^{-1}\mathbf{Y}_\lambda)^m\mathbf{Y}_\lambda^{-2}$ sends the starting value to
$\lambda^{-1}\sum_{k=0}^{m-1}\lambda^{-2k}$.
These values are strictly monotone in $m$ for every real $\lambda\ne0$,
so each $a_j$ is attained for at most one $m$ at $\lambda=\lambda_j$.
Choose a positive integer $m$ such that
$[(\mathbf{X}_{\lambda_j}^{-1}\mathbf{Y}_{\lambda_j})^m\mathbf{Y}_{\lambda_j}^{-2}\nu_j]\ne a_j$ for every $j$.
Then $W=(\mathbf{X}^{-1}\mathbf{Y})^m\mathbf{Y}^{-2}$ satisfies
$\ell(W\nu)=\prod_j\ell_j((\mathbf{X}_{\lambda_j}^{-1}\mathbf{Y}_{\lambda_j})^m\mathbf{Y}_{\lambda_j}^{-2}\nu_j)\ne0$,
as required.
\end{proof}

We now combine the multiplicity formula and the avoidance property to choose
the tree attachments for all blocks simultaneously.

\begin{lemma}\label{lem:ss-separation}
Let $Z$ be a finite set, and let $H_1,\ldots,H_s$ be connected graphs
with nonzero real edge weights and pairwise distinct nonempty vertex sets
$Z_i=V(H_i)\subseteq Z$.
There is a family $(T_z)_{z\in Z}$ of trees in $\mathcal T$ such that the rooted products
of $H_i$ with $(T_z)_{z\in Z_i}$ are nonsingular and have simple, pairwise
disjoint spectra.
\end{lemma}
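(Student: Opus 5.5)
The plan is to reduce everything to the matrices $K_{H_i}^\lambda$ of \cref{lem:ss-rooted-multiplicity}. By that lemma it suffices to find $(T_z)_{z\in Z}$ with two properties. First, $\dim\ker K_{H_i}^0=0$ for every $i$. Second, $\sum_i\dim\ker K_{H_i}^\lambda\le1$ for every real $\lambda\ne0$.

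Nonsingularity should be automatic for every family from $\mathcal T$. The recurrence \eqref{eq:ss-recurrence} gives $q_T(0)=0$ and $p_T(0)\ne0$ for all $T\in\mathcal T$, so $K_{H_i}^0=\diag(p_{T_z}(0))$ is invertible. For $\lambda\ne0$, the column of $K_{H_i}^\lambda$ indexed by $z$ is $p_{T_z}(\lambda)e_z-q_{T_z}(\lambda)A_{H_i}e_z$. Hence the kernel dimension at a fixed $\lambda$ depends only on the projective values $h_{T_z}(\lambda)$. Moreover, $\det K_{H_i}^\lambda$ is multilinear in the pairs $(p_{T_z}(\lambda),q_{T_z}(\lambda))$.

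I would choose the trees one vertex at a time, ordering $Z=\{z_1,\dots,z_n\}$, and induct with a strengthened hypothesis. At stage $k$, replace every unchosen vertex by the neutral value $h=\infty$, i.e.\ $(p,q)=(1,0)$. This is the "deleted root" value, under which the column of $z$ becomes $e_z$ and $z$ effectively drops out of $H_i$. The invariant is that, for every $\lambda\ne0$, the total nullity over $i$ of the partially specified $K_{H_i}^\lambda$ is at most one, and that these determinants are not identically zero as polynomials in $\lambda$.

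In the inductive step at $z=z_{k+1}$, let $\Lambda$ be the finite set of $\lambda\ne0$ at which some current block is singular. For each $i$ with $z\in Z_i$, expand along column $z$:
\[
\det K_{H_i}^\lambda=p\,\alpha_i(\lambda)-q\,\beta_i(\lambda),
\]
where $(p,q)=(p_{T_z}(\lambda),q_{T_z}(\lambda))$. Here $\alpha_i$ is the previous determinant and $\beta_i$ collects the entries of the $z$th column of $A_{H_i}$ against cofactors. Since changing one column changes the nullity by at most one, new problems can occur only in three situations. First, at $\lambda\in\Lambda$, where a singular block must not become more singular and no second singularity may appear. Second, at $\lambda\notin\Lambda$ where two different new blocks vanish simultaneously. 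Third, where a new block vanishes at a $\lambda\in\Lambda$ already used by another block. Each of these is a condition of the form $h_{T_z}(\lambda)\ne a$ at finitely many points $\lambda$. Indeed, two blocks $i\ne i'$ vanish together at $\lambda$ only if $h_{T_z}(\lambda)=[\beta_i:\alpha_i]=[\beta_{i'}:\alpha_{i'}]$. If these two rational functions of $\lambda$ are different, they agree at only finitely many $\lambda$, and I would forbid the common value there. \Cref{lem:ss-caterpillars} then supplies a single $T_z\in\mathcal T$ avoiding all the finitely many forbidden pairs. It also lets me forbid $h_{T_z}(\lambda)=[\beta_i:\alpha_i]$ at the finitely many points where this would make $\det K_{H_i}$ vanish identically, which keeps the determinants nonzero polynomials.

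The main obstacle is the degenerate case where $[\beta_i:\alpha_i]$ and $[\beta_{i'}:\alpha_{i'}]$ coincide identically as functions of $\lambda$. In that case no choice of $T_z$ separates the two blocks at this stage. The hypotheses that each $H_i$ is connected, has nonzero edge weights, and that the vertex sets $Z_i$ are pairwise distinct must be used exactly here. My first attempt would be to order $Z$ so that the last vertex of $Z_i\cup Z_{i'}$ to be processed lies in only one of the two sets, using $Z_i\ne Z_{i'}$. At that final step only one of the two blocks is modified, so a generic $h_{T_z}$ moves its roots off the roots of the other block. Since no single ordering may achieve this for all pairs at once, I expect instead to show directly that the function $[\beta_i:\alpha_i]$ is the Schur-complement (Weyl) function of $z$ in the weighted rooted product on $Z_i$. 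For connected $H_i$, this function should retain enough information about the vertex set, for instance through its pole structure, once earlier choices are generic. Making this genericity precise is the step I expect to need the most care.
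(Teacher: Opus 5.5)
Your reduction to the matrices $K_{H_i}^\lambda$ is correct, and so is your observation that nonsingularity is automatic. However, the inductive invariant you propose cannot be maintained, so the argument has a genuine gap.

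With the unchosen vertices frozen at $(p,q)=(1,0)$, the partial block for $H_i$ after processing $z_1,\dots,z_k$ is just the rooted product of the induced subgraph $H_i[Z_i\cap\{z_1,\dots,z_k\}]$. Already at $k=1$, every $i$ with $z_1\in Z_i$ has the same $1\times1$ effective block $p_{T_{z_1}}(\lambda)$. Since $p_{T_{z_1}}$ is a characteristic polynomial of degree at least two with $p_{T_{z_1}}(0)\ne0$, it has nonzero real roots. So the total nullity is at least two at those roots whenever $z_1$ lies in two of the sets $Z_i$, whatever $T_{z_1}$ you pick. In the application (\cref{lem:ss-cfi-spectrum}), every vertex of $\Gamma^{\mathbf2}$ lies in $Z_0=V(\Gamma^{\mathbf2})$ and in some $Z_e$, so your invariant fails at the first step for every ordering.

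Your ordering criterion does not rescue this either. With $Z_1=\{1\}$ and $Z_2=\{1,2\}$, the ordering $1,2$ satisfies your condition on the last vertex of $Z_1\cup Z_2$, yet the invariant fails at stage one. The \emph{degenerate case} you flag is therefore the central case, not a corner case. Whenever the processed induced subgraphs of two blocks coincide, $[\beta_i:\alpha_i]$ and $[\beta_{i'}:\alpha_{i'}]$ coincide identically, and no genericity of earlier choices, through pole structure or otherwise, can separate them at that stage. (Separately, your requirement that the determinants stay nonzero polynomials is automatic, since $\det K_{H_i}^\lambda$ is the characteristic polynomial of the partial rooted product.)

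What is missing is a mechanism that tolerates forced intermediate coincidences while guaranteeing they can be resolved later. The paper achieves this by keeping the unchosen coordinates free instead of fixing them at $\infty$. It considers the bad locus $\mathcal E\subseteq(\mathbb P^1(\mathbb C))^Z$: the points where some $K_{H_i}$ has nullity at least two, or where two of them are simultaneously singular. It then proves $\dim\mathcal E\le|Z|-2$, and this is exactly where connectivity and distinct vertex sets are used:
\begin{itemize}
\item $\det(\diag(t)-A_{H_i})$ is irreducible for connected $H_i$, so each $\mathcal V_i^{(1)}$ is an irreducible hypersurface;
\item distinct $Z_i$ give distinct hypersurfaces, as tested at the points $a^{(z)}$, so their pairwise intersections have codimension two;
\item $\mathcal V_i^{(2)}$ is a proper closed subset of $\mathcal V_i^{(1)}$.
\end{itemize}
Adding the parameter $\lambda$, the incidence set of pairs $(\lambda,a)$ with $a\in\mathcal E$ has dimension at most $|Z|-1$. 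Each successive choice of $T_z$ is made by applying \cref{lem:ss-caterpillars} at one sample point in each irreducible component, and this strictly lowers the dimension. So the set is empty after $|Z|$ choices.

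Your one-column-at-a-time determinant bookkeeping could replace this only with an invariant that allows such coincidences among partial blocks, and your proposal does not supply one.
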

\begin{proof}
Let $A_{H_i}$ be the weighted adjacency matrix of $H_i$.
For $\alpha,\beta\in\mathbb C^Z$, let
\begin{equation}\label{eq:ss-core}
 K_{H_i}(\alpha,\beta)=\diag(\alpha_z)_{z\in Z_i}-A_{H_i}\diag(\beta_z)_{z\in Z_i}.
\end{equation}
For a projective point $a=([\alpha_z:\beta_z])_{z\in Z}$,
rescaling each homogeneous representative multiplies $K_{H_i}(\alpha,\beta)$
on the right by an invertible diagonal matrix.
Thus $\dim\ker K_{H_i}(\alpha,\beta)$ depends only on $a$.

We define the following closed subsets of $(\mathbb P^1(\mathbb C))^Z$,
for $k\in\{1,2\}$:
\begin{equation}\label{eq:ss-bad-sets}
 \begin{aligned}
 \mathcal V_i^{(k)}&=\{([\alpha_z:\beta_z])_{z\in Z}:\dim\ker K_{H_i}(\alpha,\beta)\ge k\}, &
 \mathcal E&=\bigcup_i\mathcal V_i^{(2)}
       \cup\bigcup_{i\ne i'}(\mathcal V_i^{(1)}\cap \mathcal V_{i'}^{(1)}).
 \end{aligned}
\end{equation}
Fix trees $T_z\in\mathcal T$, and write $\widehat H_i$ for the rooted
product of $H_i$ with $(T_z)_{z\in Z_i}$.
For $\lambda\in\R$, let
$K_{H_i}^\lambda=K_{H_i}((p_{T_z}(\lambda))_{z\in Z},(q_{T_z}(\lambda))_{z\in Z})$.
By \cref{lem:ss-rooted-multiplicity}, for $\lambda\in\R$ and
$a=(h_{T_z}(\lambda))_{z\in Z}$, the number $\lambda$ is an eigenvalue
of $\widehat H_i$ of multiplicity at least $k$ exactly when
$a\in\mathcal V_i^{(k)}$.
It is an eigenvalue of both $\widehat H_i$ and $\widehat H_{i'}$ exactly
when $a\in\mathcal V_i^{(1)}\cap\mathcal V_{i'}^{(1)}$.
At $\lambda=0$, the matrix $K_{H_i}^0$ is invertible
because $q_{T_z}(0)=0$ and $p_{T_z}(0)\ne0$.
Thus the products $\widehat H_i$ are nonsingular.
Their spectra are simple and pairwise disjoint if
\begin{equation}\label{eq:ss-bad}
 (h_{T_z}(\lambda))_{z\in Z}\notin\mathcal E
 \quad\text{for every }\lambda\in\R^\times.
\end{equation}

Put $d=|Z|$.
We next prove $\dim\mathcal E\le d-2$ over $\mathbb C$.
The polynomial $\det(\diag(t)-A_{H_i})$ in the variables $(t_z)_{z\in Z_i}$
is irreducible because $H_i$ is connected
\cite[Lemma~4.1]{AlAhmadieh2026}.
Substituting $t_z=\alpha_z/\beta_z$ and clearing denominators gives the
determinant of the matrix in \cref{eq:ss-core}, as a polynomial in the
homogeneous representatives $(\alpha_z,\beta_z)$.
The monomial $\prod_{z\in Z_i}\alpha_z$ shows that none of the $\beta_z$
divides this polynomial. Together with the irreducibility of its
dehomogenisation, this implies that the multihomogeneous polynomial is irreducible.
Its projective zero set is $\mathcal V_i^{(1)}$.
The coordinates indexed by $Z\setminus Z_i$ are unrestricted, so
$\mathcal V_i^{(1)}$ is an irreducible hypersurface in $(\mathbb P^1(\mathbb C))^Z$.
For each $z\in Z$, let $a^{(z)}\in(\mathbb P^1(\mathbb C))^Z$ have
coordinate $[0:1]$ at $z$ and $[1:0]$ elsewhere.
At $a^{(z)}$, if $z\in Z_i$, all columns of $K_{H_i}(\alpha,\beta)$ other than column $z$
are standard basis vectors, and row $z$ is zero, so the matrix has kernel dimension one.
If $z\notin Z_i$, the matrix is the identity, so its kernel has dimension zero.
For $i\ne i'$, choose $z$ in the symmetric difference of $Z_i$ and
$Z_{i'}$; then $a^{(z)}$ lies in exactly one of $\mathcal V_i^{(1)}$ and
$\mathcal V_{i'}^{(1)}$.
Thus the $\mathcal V_i^{(1)}$ are distinct, so each intersection
$\mathcal V_i^{(1)}\cap \mathcal V_{i'}^{(1)}$ has dimension at most $d-2$.
For each $i$ and each $z\in Z_i$, the point $a^{(z)}$ lies in
$\mathcal V_i^{(1)}\setminus\mathcal V_i^{(2)}$.
Thus $\mathcal V_i^{(2)}$ is a proper closed subset of
$\mathcal V_i^{(1)}$ and also has dimension at most $d-2$.
The union defining $\mathcal E$ therefore has dimension at most $d-2$.

To satisfy \cref{eq:ss-bad}, we choose trees successively.
We enumerate $Z$ as $\{1,\ldots,d\}$.
For fixed trees $T_1,\ldots,T_j$, let $P_j$ be the set of pairs
$(\lambda,a)$ with $\lambda\in\R^\times$,
$a\in\mathcal E$, and
$a_z=h_{T_z}(\lambda)$ for all $z\le j$.
Let $\Omega_j$ be the Zariski closure of $P_j$ in
$\mathbb C\times\mathcal E$.
Then $\dim\Omega_0\le d-1$.
We choose the trees so that the dimension of $\Omega_j$ strictly decreases
whenever it is nonempty; after $d$ choices, the set will be empty.
We choose $T_{j+1}$ so that $\Omega_{j+1}$ contains no irreducible
component of $\Omega_j$.
By definition, $P_{j+1}\subseteq P_j$, so $\Omega_{j+1}\subseteq\Omega_j$.
Every pair $(\lambda,a)\in P_{j+1}$ satisfies
$a_{j+1}=h_{T_{j+1}}(\lambda)$. In homogeneous coordinates, this is
equivalent to
\begin{equation}\label{eq:ss-cut}
 \alpha_{j+1}q_{T_{j+1}}(\lambda)-\beta_{j+1}p_{T_{j+1}}(\lambda)=0.
\end{equation}
This polynomial equation also holds on the Zariski closure
$\Omega_{j+1}$.
It therefore suffices to choose $T_{j+1}$ so that \cref{eq:ss-cut}
fails at one point of every irreducible component of $\Omega_j$.

For each irreducible component $\mathcal W$ of $\Omega_j$, choose a pair
$(\lambda_{\mathcal W},a_{\mathcal W})\in P_j\cap\mathcal W$.
Such a pair exists because $P_j$ is dense in $\Omega_j$, which has only
finitely many irreducible components.
The pairs $(\lambda_{\mathcal W},(a_{\mathcal W})_{j+1})$ form a finite
list in $\R^\times\times\mathbb P^1(\mathbb C)$.
A nonreal projective point cannot equal $h_T(\lambda)$ for real $\lambda$.
Applying \cref{lem:ss-caterpillars} to those pairs in the list whose
projective coordinate is real therefore gives a tree $T_{j+1}\in\mathcal T$ with
$h_{T_{j+1}}(\lambda_{\mathcal W})\ne(a_{\mathcal W})_{j+1}$ for every $\mathcal W$.
Thus \cref{eq:ss-cut} fails at $(\lambda_{\mathcal W},a_{\mathcal W})$ in every irreducible
component $\mathcal W$ of $\Omega_j$.
Thus $\Omega_{j+1}$ contains no irreducible component of $\Omega_j$,
so $\dim\Omega_{j+1}<\dim\Omega_j$.
If $\Omega_j$ is empty, we choose the remaining trees arbitrarily.
Since $\dim\Omega_0\le d-1$ and the dimension strictly decreases
whenever $\Omega_j$ is nonempty, $\Omega_d=\varnothing$, so $P_d=\varnothing$.
Hence \cref{eq:ss-bad} holds, and the lemma follows.
\end{proof}

\subsection{Constructing the witnesses}
\label{sec:witness-construction}

We now combine the preceding lemmas to prove \cref{lem:ss-transfer},
which converts CFI counterexamples into counterexamples with simple spectrum.
\Cref{thm:incompleteness} then follows by applying this lemma to known CFI
pairs for its three graph classes.

\begin{lemma}\label{lem:ss-cfi-spectrum}
Let $\Gamma$ be a connected cubic graph.
There is a family $(T_z)_{z\in V(\Gamma^{\mathbf2})}$ of trees in $\mathcal T$ such that the
CFI-compatible rooted products $\widehat{\CFI}_0(\Gamma^{\mathbf2})$ and $\widehat{\CFI}_1(\Gamma^{\mathbf2})$
are non-isomorphic, bipartite, cospectral, nonsingular graphs with
simple spectrum and maximum degree at most six.
\end{lemma}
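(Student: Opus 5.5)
We apply \cref{lem:ss-separation} to the blocks of \cref{lem:ss-cfi-blocks}. Take $Z=V(\Gamma^{\mathbf2})$, $H_0=\Gamma^{\mathbf2}_0$ with vertex set $Z$, and, for each $e=vw\in E(\Gamma)$, $H_e=\Gamma^{\mathbf2}_e$ with vertex set $\{v,e,w\}$. All of these are connected with nonzero edge weights $\sqrt2$.

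The vertex sets are pairwise distinct. Each $\{v,e,w\}$ is a proper subset of $Z$, since $\Gamma$ is cubic and so has at least four vertices. Distinct edges $e\ne e'$ give distinct sets, since the subdivision vertex $e$ lies only in the set for $e$.

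\Cref{lem:ss-separation} then yields trees $(T_z)_{z\in Z}$ in $\mathcal T$ such that all blocks $\widehat{\Gamma^{\mathbf2}_0}$ and $\widehat{\Gamma^{\mathbf2}_e}$ are nonsingular with simple, pairwise disjoint spectra. By \cref{lem:ss-cfi-blocks}, for each $i\in\{0,1\}$ the adjacency matrix of $\widehat{\CFI}_i(\Gamma^{\mathbf2})$ is orthogonally similar to the same direct sum \eqref{eq:ss-rooted-blocks}. The blocks do not depend on $i$. Hence the two graphs are cospectral and nonsingular, and they have simple spectrum because the block spectra are simple and pairwise disjoint.

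The combinatorial properties come next.

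\emph{Bipartite.} $\CFI_i(\Gamma^{\mathbf2})$ is bipartite: every edge joins a fibre over an original vertex to a fibre over a subdivision vertex. Attaching trees at single vertices preserves bipartiteness.

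\emph{Degree.} A vertex over an original vertex $v$ has $3$ CFI neighbours, one per incident edge, since each $\xi$ fixes $\xi_e$ and exactly one vertex of the fibre over $e$ matches. A vertex over a subdivision vertex has $2$ CFI neighbours. Every root of a tree in $\mathcal T$ has degree at most $2$, and all other tree vertices have degree at most $3$. So the maximum degree is at most $3+2=5\le6$.

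\emph{Non-isomorphism.} This step I expect to be the main obstacle, since the attachments could a priori create an isomorphism. I would use the spectrum: both graphs have simple spectrum with the same eigenvalues, so they are non-isomorphic as soon as some eigenvector structure differs. A cleaner route is to recover the CFI graph intrinsically from the rooted product.

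Attempt~1 is to recover it by degrees. The roots of the attachments over original vertices have degree $3+\deg(r)$, but tree vertices can also reach degree $3$, so degrees alone may not identify the CFI vertices.

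Attempt~2, which I would commit to, uses homomorphism counts together with \cref{lem:ss-attachments}, taking $\mathcal C$ to be all graphs. If the products were isomorphic, they would agree on all hom counts. I would try to run \eqref{eq:ss-attachment-count} in reverse to transfer this back to $\CFI_0\equiv\CFI_1$ over all graphs, hence $\CFI_0\cong\CFI_1$ by Lov\'asz, a contradiction. That reversal is not immediate.

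The fallback is a direct structural argument. I would show that any isomorphism $\widehat{\CFI}_0\to\widehat{\CFI}_1$ must map the CFI vertex set to itself. The natural candidate characterisation is that, after removing the pendant trees, the CFI vertices are exactly the vertices lying on cycles: every cycle avoids tree vertices, because each attached tree meets the rest of the graph only at its root. Since $\Gamma^{\mathbf2}$ is $2$-edge-connected (cubic graphs have edges on cycles, but bridges are possible), this needs care. Instead I would take the subgraph induced by vertices on cycles together with their connections, or more robustly use that each attached copy is a pendant tree hanging from exactly one CFI vertex. The $2$-core of the product then equals the $2$-core of $\CFI_i(\Gamma^{\mathbf2})$.

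An isomorphism therefore restricts to an isomorphism of $2$-cores. If $\Gamma$ has no degree-$1$ issues, meaning the CFI graph has minimum degree $2$, its $2$-core is itself. This gives $\CFI_0\cong\CFI_1$, contradicting \cite[Lemma~5]{Neuen2024}.
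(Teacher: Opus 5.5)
Your overall route is the paper's. You apply \cref{lem:ss-separation} to $\Gamma^{\mathbf2}_0$ and the paths $\Gamma^{\mathbf2}_e$, then use \cref{lem:ss-cfi-blocks} for cospectrality, nonsingularity and simple spectrum, and finally the $2$-core for non-isomorphism. The spectral part and the bipartiteness argument are fine.

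\textbf{The degree count is wrong.} A vertex $(e,b)$ over a subdivision vertex $e=vw$ is adjacent to every $(v,\xi)$ with $\xi_{ve}=b$. Exactly two of the four elements of $\Xi_i(v)$ satisfy this, since parity fixes only the remaining coordinate; the same holds over $w$. Counting edges confirms it: each of the four vertices over $v$ has one neighbour in the two-element fibre over $e$. So vertices over subdivision vertices have CFI degree four, not two. Your bound $3+2=5$ is therefore false in general. A vertex over $e$ has degree $4+\deg r_e$, which is six whenever the root of $T_e$ has degree two. This happens whenever the last construction step of $T_e$ is operation $Y$, and \cref{lem:ss-separation} gives you no control over which trees are chosen. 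The correct count $4+2=6$ is exactly why the statement says six.

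\textbf{The non-isomorphism argument needs tidying.} Drop Attempt~2 and the hedges; your fallback is the paper's argument and is complete once stated cleanly. No $2$-edge-connectivity of $\Gamma^{\mathbf2}$ is needed. It can fail anyway, since cubic graphs may have bridges, so your parenthetical is self-contradictory. Every vertex of $\CFI_i(\Gamma^{\mathbf2})$ has degree at least three there. Hence repeatedly deleting leaves from $\widehat{\CFI}_i(\Gamma^{\mathbf2})$ removes exactly the non-root vertices of the attached trees and never a root. The $2$-core of the rooted product is thus $\CFI_i(\Gamma^{\mathbf2})$ itself, with no ``degree-one issue'' to check. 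Since the $2$-core is an isomorphism invariant, an isomorphism of the rooted products would give $\CFI_0(\Gamma^{\mathbf2})\cong\CFI_1(\Gamma^{\mathbf2})$, contradicting \cite[Lemma~5]{Neuen2024}.
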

\begin{proof}
Recall that $\Gamma^{\mathbf2}_0$ is $\Gamma^{\mathbf2}$ with every edge
weighted $\sqrt2$, and $\Gamma^{\mathbf2}_e$ is the path $v$--$e$--$w$
with both edge weights $\sqrt2$, for $e=vw\in E(\Gamma)$.
They are connected and have nonzero edge weights. Their vertex sets
$V(\Gamma^{\mathbf2})$ and $\{v,e,w\}$ for $e=vw\in E(\Gamma)$ are pairwise distinct.
By \cref{lem:ss-separation} with $Z=V(\Gamma^{\mathbf2})$,
there is one tree $T_z\in\mathcal T$ for each
$z\in V(\Gamma^{\mathbf2})$ such that the rooted products $\widehat{\Gamma^{\mathbf2}_0}$ and
$\widehat{\Gamma^{\mathbf2}_e}$ are nonsingular and have simple, pairwise disjoint
spectra.
The block matrix in \cref{eq:ss-rooted-blocks} therefore has simple
spectrum and is nonsingular.
By \cref{lem:ss-cfi-blocks}, the adjacency matrices of
$\widehat{\CFI}_0(\Gamma^{\mathbf2})$ and $\widehat{\CFI}_1(\Gamma^{\mathbf2})$ are both
orthogonally similar to this matrix, so the two rooted products are
cospectral and nonsingular and have simple spectrum.

Each CFI vertex over $v\in V(\Gamma)$ has one neighbour over each of the
three adjacent subdivision vertices, hence degree three.
Each CFI vertex over $e=vw\in E(\Gamma)$ has two neighbours over each
of $v,w$, hence degree four.
Repeatedly deleting leaves from either rooted product removes exactly
the non-root vertices of its attached trees.
The graphs $\widehat{\CFI}_0(\Gamma^{\mathbf2})$ and $\widehat{\CFI}_1(\Gamma^{\mathbf2})$ are
non-isomorphic because their maximal subgraphs of minimum degree at least two
are the non-isomorphic graphs $\CFI_0(\Gamma^{\mathbf2})$ and $\CFI_1(\Gamma^{\mathbf2})$.
Both rooted products are bipartite, since each CFI graph is bipartite
and attaching trees preserves bipartiteness.
Each attached tree is subcubic with root degree at most two, so the rooted products have
maximum degree at most six.
\end{proof}

\begin{lemma}\label{lem:ss-transfer}
Let $\mathcal C$ be a minor-closed graph class and let $B$ be a connected
graph satisfying $\CFI_0(B)\equiv_{\mathcal C}\CFI_1(B)$. Then there are non-isomorphic
graphs $G$ and $G'$ with simple spectrum such that $G\equiv_{\mathcal C}G'$.
The witnesses can be chosen bipartite, cospectral, nonsingular, and of
maximum degree at most six.
\end{lemma}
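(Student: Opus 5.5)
The idea is to reduce to the subdivided cubic base handled by \cref{lem:ss-cfi-spectrum}, and then transfer indistinguishability from $B$ to that base through minors. First, I choose a connected cubic graph $\Gamma$ such that $B$ is a minor of $\Gamma^{\mathbf2}$. One construction: replace each vertex $v$ of $B$ of degree $d\ge 4$ by a cycle (or path) of $d$ cubic-degree vertices, each carrying one original edge, and pad vertices of degree at most two with small cubic gadgets. This gives a connected graph of maximum degree three that contracts to $B$. Any connected subcubic graph is then a minor of some connected cubic graph, for instance by attaching a copy of $K_4$ minus an edge at each deficient vertex, with two attachments for degree-one vertices. Subdividing each edge only introduces degree-two vertices, so $B\preceq\Gamma\preceq\Gamma^{\mathbf2}$ as minors. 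Since $\Gamma^{\mathbf2}$ is connected, it can serve as a CFI base.

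Second, I transfer indistinguishability along the minor relation. I will use the standard CFI monotonicity fact, which I would cite from Roberson's framework \cite{Roberson2026}: if $B$ is a minor of the connected graph $B'$ and $\CFI_0(B)\equiv_{\mathcal C}\CFI_1(B)$ for a minor-closed class $\mathcal C$, then $\CFI_0(B')\equiv_{\mathcal C}\CFI_1(B')$. This is the step I expect to be the main obstacle. In Roberson's characterisation, $\CFI_0(B)\not\equiv_{\mathcal C}\CFI_1(B)$ holds exactly when some $F\in\mathcal C$ has a weak oddomorphism to $B$, which is in turn equivalent (for minor-closed classes) to $B$ being a minor of some member of $\mathcal C$. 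Contrapositively, the hypothesis says no $F\in\mathcal C$ has $B$ as a minor. Then no $F\in\mathcal C$ has $B'$ as a minor either, because $B\preceq B'$ and minors compose. So the pair over $B'$ is again indistinguishable. If this cleaner equivalence is not available in citable form, I would instead argue directly: contracting an edge of the base, or deleting an edge or vertex, corresponds to a contraction or deletion operation on CFI graphs that preserves the relevant homomorphism counts. That direct argument needs more care.

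Third, I apply \cref{lem:ss-cfi-spectrum} to $\Gamma$ to obtain trees $(T_z)$ such that $G=\widehat{\CFI}_0(\Gamma^{\mathbf2})$ and $G'=\widehat{\CFI}_1(\Gamma^{\mathbf2})$ are non-isomorphic, bipartite, cospectral, nonsingular, have simple spectrum, and have maximum degree at most six. Finally, \cref{lem:ss-attachments} applied with base $\Gamma^{\mathbf2}$ and the indistinguishability from the second step gives $G\equiv_{\mathcal C}G'$, which completes the proof.
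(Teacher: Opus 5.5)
Your overall route is the paper's: build a connected cubic $\Gamma$ with $B$ a minor of $\Gamma^{\mathbf2}$, transfer CFI indistinguishability from $B$ to $\Gamma^{\mathbf2}$ by minor monotonicity, and then combine \cref{lem:ss-cfi-spectrum} with \cref{lem:ss-attachments}.

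The gap is in the reasoning you give for the monotonicity step. The claimed equivalence between ``some $F\in\mathcal C$ has a weak oddomorphism to $B$'' and ``$B$ is a minor of some member of $\mathcal C$'' is not a known result. Only one direction is clear: if $B\in\mathcal C$, then $B$ itself distinguishes the pair. Your last inference uses the other direction: no member of $\mathcal C$ has $B'$ as a minor, hence $\CFI_0(B')\equiv_{\mathcal C}\CFI_1(B')$. Take $B'=K_t$ and any proper minor-closed $\mathcal C$ excluding $K_t$. That direction would then prove the weak Roberson conjecture outright and, through this very lemma, answer \cref{prob:clique-minors}. This is exactly why the paper imports specific indistinguishable CFI pairs from Neuen, Neuen--Seppelt and Thilikos--Wiederrecht, and why \cref{thm:incompleteness} stops at $K_7$. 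So the argument in your second paragraph does not establish the step, and the fallback about operations on CFI graphs is too vague to replace it. What you need is only the monotonicity statement itself, \cite[Corollary~5.7]{Roberson2026}. Cite it directly, as the paper does, and drop the equivalence.

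A smaller point concerns your padding gadget, which does not work as described. Joining a deficient vertex to one degree-two vertex of a copy of $K_4$ minus an edge leaves the other degree-two vertex of that copy at degree two, so the graph is not cubic. A working version:
\begin{itemize}
\item for a vertex missing one edge, use $K_4$ with one edge subdivided, attached at the subdivision vertex;
\item for a vertex missing two edges, join it to both degree-two vertices of a diamond;
\item treat $B=K_1$ separately.
\end{itemize}
The paper avoids this bookkeeping by blowing up $K_t$, for $t\ge\max\{4,|V(B)|\}$, into cycles of length $t-1$. This gives a connected simple cubic graph that contracts to $K_t$, which contains $B$ as a subgraph, with no case analysis.
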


\begin{proof}
To apply \cref{lem:ss-cfi-spectrum}, we first construct a connected cubic
graph $\Gamma$ such that $\Gamma^{\mathbf2}$ contains $B$ as a minor.
Let $t$ be an integer with $t\geq\max\{4,|V(B)|\}$.
Let $\Gamma$ be the connected cubic graph obtained from $K_t$ by
replacing each vertex with a cycle of length $t-1\geq3$ and attaching its
$t-1$ incident edges to distinct cycle vertices.
Subdividing edges preserves connectedness, so $\Gamma^{\mathbf2}$ is connected.
Contracting one edge at each subdivision vertex of $\Gamma^{\mathbf2}$ gives $\Gamma$,
and contracting the replacement cycles of $\Gamma$ gives $K_t$.
Since $t\geq|V(B)|$, the graph $B$ is a minor of $K_t$ and hence of $\Gamma^{\mathbf2}$.
By \cite[Corollary~5.7]{Roberson2026}, CFI indistinguishability over the
minor-closed class $\mathcal C$ passes from a connected base to a connected
base containing it as a minor. Hence the assumption
$\CFI_0(B)\equiv_{\mathcal C}\CFI_1(B)$ gives
$\CFI_0(\Gamma^{\mathbf2})\equiv_{\mathcal C}\CFI_1(\Gamma^{\mathbf2})$.
Using the trees from \cref{lem:ss-cfi-spectrum}, let
$G=\widehat{\CFI}_0(\Gamma^{\mathbf2})$ and $G'=\widehat{\CFI}_1(\Gamma^{\mathbf2})$.
By \cref{lem:ss-cfi-spectrum}, it remains to prove
$G\equiv_{\mathcal C}G'$, which follows from \cref{lem:ss-attachments}.
\end{proof}

\begin{proof}[Proof of \cref{thm:incompleteness}]
Each of the three classes is minor-closed.
By \cref{lem:ss-transfer}, it suffices to give, for each of the three
classes $\mathcal C$, a connected graph $B$ with
$\CFI_0(B)\equiv_{\mathcal C}\CFI_1(B)$.
When $\mathcal C$ is the class of graphs of treewidth at most $k\geq1$, the graph
$B=K_{k+2}$ satisfies
$\CFI_0(B)\equiv_{\mathcal C}\CFI_1(B)$~\cite[Theorem~10 and Lemma~12]{Neuen2024}.
When $\mathcal C$ is the class of graphs of Euler genus at most $g$, a connected
graph $B$ with $\CFI_0(B)\equiv_{\mathcal C}\CFI_1(B)$ is given in
\cite[Lemma~2.10 and proof of Theorem~1.8]{NeuenSeppelt2026}.
When $\mathcal C$ is the class of $K_7$-minor-free graphs, a connected graph $B$ with
$\CFI_0(B)\equiv_{\mathcal C}\CFI_1(B)$ is given in
\cite[Lemma~2.10 and proof of Theorem~1.9]{NeuenSeppelt2026}
and \cite[Theorem~1.7 and Section~1.3]{ThilikosWiederrecht2024}.
\end{proof}

%% file: figures/cfi-example.tex
\begin{figure}[t]
\centering
\begin{tikzpicture}[
  scale=.95,
  base/.style={circle,fill,inner sep=1.8pt},
  cfi/.style={circle,fill,inner sep=1.5pt},
  att/.style={circle,draw,fill=white,inner sep=1.3pt},
  lab/.style={font=\scriptsize,inner sep=1.5pt},
  line width=.8pt,
  every node/.style={font=\small},
]
\def\treeA#1#2{\draw (#1)--++(#2:.26) node[att]{};}
\def\treeB#1#2{\draw (#1)--++(#2:.26) node[att]{}--++(#2:.26) node[att]{};}
\def\treeC#1#2{\draw (#1)--++(#2:.26) node[att]{}--++(#2:.26) node[att]{}
                --++(#2:.26) node[att]{};}
\pgfmathsetmacro{\baseAngle}{atan2(.95/3,.6)}
\pgfmathsetmacro{\cfiAngle}{atan2(.6,.45)}
\foreach \attached in {0,1} {
  \begin{scope}[xshift={5.8*\attached cm}]
    \coordinate (a) at (0,.55);
    \coordinate (b) at (-.6,-.4);
    \coordinate (c) at (.6,-.4);
    \draw (a)--(b)--(c)--cycle;
    \ifnum\attached=0
      \node[base,label={[lab]above:$a$}] at (a) {};
      \node[base,label={[lab]below left:$b$}] at (b) {};
      \node[base,label={[lab]below right:$c$}] at (c) {};
      \node at (0,-1.55) {$B$};
    \else
      \treeA{a}{90}
      \treeB{b}{180+\baseAngle}
      \treeC{c}{360-\baseAngle}
      \node[lab,above] at (0,.85) {$T_a$};
      \node[lab,above] at (-.95,-.48) {$T_b$};
      \node[lab,above] at (1.05,-.5) {$T_c$};
      \foreach \v in {a,b,c} \node[base] at (\v) {};
      \node at (0,-1.55) {$\widehat B$};
    \fi
    \begin{scope}[xshift={(2.5+.8*\attached)*1cm}]
      \coordinate (a0) at (-1.4,.075);
      \coordinate (b0) at (-.45,.675);
      \coordinate (c0) at (.45,.675);
      \coordinate (a1) at (1.4,.075);
      \coordinate (b1) at (.45,-.525);
      \coordinate (c1) at (-.45,-.525);
      \draw (a0)--(b0)--(c0)--(a1)--(b1)--(c1)--cycle;
      \ifnum\attached=0
        \node[cfi,label={[lab]above left:$a$}] at (a0) {};
        \node[cfi,label={[lab]above:$b$}] at (b0) {};
        \node[cfi,label={[lab]above:$c$}] at (c0) {};
        \node[cfi,label={[lab]above right:$a$}] at (a1) {};
        \node[cfi,label={[lab]below:$b$}] at (b1) {};
        \node[cfi,label={[lab]below:$c$}] at (c1) {};
        \node at (0,-1.55) {$\CFI_1(B)$};
      \else
        \treeA{a0}{180} \treeA{a1}{0}
        \treeB{b0}{180-\cfiAngle} \treeB{b1}{360-\cfiAngle}
        \treeC{c0}{\cfiAngle}     \treeC{c1}{180+\cfiAngle}
        \foreach \v in {a0,b0,c0,a1,b1,c1} \node[cfi] at (\v) {};
        \node at (0,-1.55) {$\widehat{\CFI}_1(B)$};
      \fi
    \end{scope}
  \end{scope}
}
\end{tikzpicture}
\caption{A CFI-compatible rooted product for $B=K_3$.
The hexagon labels indicate the base vertices of the fibres.
The attached paths $T_a,T_b,T_c$ have lengths one, two, and three;
each vertex in the fibre over $z$ receives a copy of $T_z$.}
\label{fig:ss-cfi-example}
\end{figure}

%% file: figures/cfi-blocks.tex
\begin{figure}[t]
\centering
\begin{tikzpicture}[
  x=.17cm,y=-.17cm,
  entry/.style={fill=black},
  gridline/.style={draw=black!15,line width=.1pt},
  fibreline/.style={draw=black!50,line width=.35pt},
  block/.style={draw=black,line width=.9pt},
  every node/.style={font=\scriptsize,inner sep=1pt},
]
\node[font=\small] at (14,-5) {$A=A_{\CFI_1(\Gamma^{\mathbf2})}$};
\foreach \v/\e/\k in {
  0/0/0,0/1/1,0/2/2,
  1/0/0,1/3/1,1/4/2,
  2/1/0,2/3/1,2/5/2,
  3/2/0,3/4/1,3/5/2
} {
  \ifnum\v=0
    \def\fibrebits{0/0/0/1,1/0/1/0,2/1/0/0,3/1/1/1}
  \else
    \def\fibrebits{0/0/0/0,1/0/1/1,2/1/0/1,3/1/1/0}
  \fi
  \foreach \r/\first/\second/\third in \fibrebits {
    \ifcase\k\edef\bit{\first}\or\edef\bit{\second}\or\edef\bit{\third}\fi
    \pgfmathtruncatemacro{\row}{4*\v+\r}
    \pgfmathtruncatemacro{\col}{16+2*\e+\bit}
    \fill[entry] (\col,\row) rectangle ++(1,1);
    \fill[entry] (\row,\col) rectangle ++(1,1);
  }
}
\foreach \position in {0,...,28} {
  \draw[gridline] (\position,0)--(\position,28);
  \draw[gridline] (0,\position)--(28,\position);
}
\foreach \boundary in {0,4,8,12,16,18,20,22,24,26,28} {
  \draw[fibreline] (\boundary,0)--(\boundary,28);
  \draw[fibreline] (0,\boundary)--(28,\boundary);
}
\foreach \position/\name in {
  2/a,6/b,10/c,14/d,17/ab,19/ac,21/ad,23/bc,25/bd,27/cd
} {
  \node[anchor=west,rotate=90] at (\position,-.5) {$\name$};
  \node[anchor=east] at (-.5,\position) {$\name$};
}
\node at (14,30.5) {Filled entry: $1$};

\draw[->,line width=.7pt] (30.5,14)--(39,14);
\node[align=center] at (34.75,18) {Fibre basis\\ordered by block};

\begin{scope}[xshift=7.1cm]
  \node[font=\small] at (14,-5) {$Q^{\top}AQ$};
  \foreach \e/\u/\v in {0/0/1,1/0/2,2/0/3,3/1/2,4/1/3,5/2/3} {
    \pgfmathtruncatemacro{\subdivision}{4+\e}
    \foreach \endpoint in {\u,\v} {
      \fill[entry] (\subdivision,\endpoint) rectangle ++(1,1);
      \fill[entry] (\endpoint,\subdivision) rectangle ++(1,1);
    }
    \pgfmathtruncatemacro{\start}{10+3*\e}
    \foreach \r/\c in {0/1,1/0,1/2,2/1} {
      \fill[entry] ({\start+\c},{\start+\r}) rectangle ++(1,1);
    }
  }
  \foreach \position in {0,...,28} {
    \draw[gridline] (\position,0)--(\position,28);
    \draw[gridline] (0,\position)--(28,\position);
  }
  \draw[fibreline] (0,0) rectangle (28,28);
  \draw[block] (0,0) rectangle (10,10);
  \node[anchor=west] at (29,5) {$A_{\Gamma^{\mathbf2}_0}$};
  \foreach \j/\e in {0/ab,1/ac,2/ad,3/bc,4/bd,5/cd} {
    \pgfmathtruncatemacro{\start}{10+3*\j}
    \draw[block] (\start,\start) rectangle ++(3,3);
    \node[anchor=west] at (29,{\start+1.5}) {$A_{\Gamma^{\mathbf2}_{\e}}$};
  }
  \node at (14,30.5) {Filled entry: $\sqrt2$};
\end{scope}
\end{tikzpicture}
\caption{Block decomposition for $\Gamma^{\mathbf2}$, where $\Gamma=K_4$
has vertices $a,b,c,d$, with attachments omitted.
The left matrix groups the $28$ CFI vertices by fibre and uses odd parity
at $a$. Within fibres, the bit vectors are in lexicographic order,
with incident-edge coordinates ordered by their labels.
The columns of $Q$ are the fibre basis vectors from the proof, ordered
first by the ten constant fibre vectors and then by the three signed fibre vectors
over $v,e,w$ for each edge $e=vw$ of $K_4$.
The right matrix has one $10\times10$ block $A_{\Gamma^{\mathbf2}_0}=\sqrt2 A_{\Gamma^{\mathbf2}}$ and
six $3\times3$ blocks $A_{\Gamma^{\mathbf2}_e}$; unfilled entries are zero.
Both CFI parities give the same right matrix.}
\label{fig:ss-cfi-blocks}
\end{figure}

%% file: characterisation.tex
\section{Proof of Theorem~\ref{thm:main}}
\label{sec:characterisation}

For a graph $G$, let
$W_G=\im A_G\subseteq\R^{V(G)}$.
For $d\in\{0,1,2,3\}$ and $f_1,\ldots,f_d\in\R^{V(G)}$, we define
the multilinear product
\begin{equation}\label{eq:image-trilinear}
 \langle f_1,\ldots,f_d\rangle_G
 =\sum_{x\in V(G)}\prod_{i=1}^{d}f_i(x).
\end{equation}
We define the empty product to be $1$, so $\langle\rangle_G=|V(G)|$.
The cases with one, two, and three arguments are the linear form,
the standard inner product, and the trilinear product, respectively.
We equip $W_G$ with the restricted inner product.

We call $G$ and $G'$ \emph{adjacency-image equivalent}, written
$G\sim_{\mathrm{im}}G'$, if $|V(G)|=|V(G')|$ and
there is a bijective linear map $U:W_G\to W_{G'}$ such that
\begin{equation}\label{eq:adjacency-image-equivalence}
 \begin{aligned}
 U(A_Gf)&=A_{G'}(Uf), &
 \langle Uf,Ug\rangle_{G'}&=\langle f,g\rangle_G, &
 \langle Uf,Ug,Uh\rangle_{G'}&=\langle f,g,h\rangle_G
 \end{aligned}
\end{equation}
for every $f,g,h\in W_G$.

We characterise $\equiv_{\mathcal D_3}$ by this equivalence
(\cref{sec:subcubic-characterisation}) and then show that it implies
isomorphism when one graph has nullity at most one
(\cref{sec:nullity-one-reconstruction}).

\subsection{Characterising subcubic homomorphism indistinguishability}
\label{sec:subcubic-characterisation}

Adjacency-image equivalence requires preservation of the inner and
trilinear products, but homomorphism counts from graphs with degree-one
vertices also involve the linear form.
The following lemma shows that the defining conditions already imply
its preservation.
\begin{lemma}\label{lem:image-linear-preservation}
Let $G\sim_{\mathrm{im}}G'$, and let $U:W_G\to W_{G'}$ witness this equivalence.
Then
\begin{equation}\label{eq:image-sum-transport}
 \langle Uf\rangle_{G'}=\langle f\rangle_G
\end{equation}
for every $f\in W_G$.
\end{lemma}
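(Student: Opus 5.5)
The idea is to express the linear form $\langle\cdot\rangle_G$ on $W_G$ using only the adjacency operator, the inner product and the trilinear product. Each of these is transported by $U$, so the linear form will be too. There are two ingredients. First, every $f\in W_G$ can be written as $f=A_Gg$ with $g\in W_G$. Second, the degree vector $d_G=A_G\ones$ can be reconstructed from an orthonormal basis of $W_G$ via the trilinear product.

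\emph{Step 1: reduction to the degree vector.} Since $A_G$ is symmetric, $\ker A_G=W_G^\perp$. Hence $A_G$ restricted to $W_G$ is an injective map $W_G\to W_G$, and therefore a bijection. So every $f\in W_G$ equals $A_Gg$ for a unique $g\in W_G$. Then
\[
 \langle f\rangle_G=\langle A_Gg,\ones\rangle_G=\langle g,A_G\ones\rangle_G=\langle g,d_G\rangle_G ,
\]
where $d_G(x)=|N_G(x)|$. By the intertwining property, $Uf=UA_Gg=A_{G'}Ug$ with $Ug\in W_{G'}$. The same computation in $G'$ gives $\langle Uf\rangle_{G'}=\langle Ug,d_{G'}\rangle_{G'}$. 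It therefore suffices to prove $\langle Ug,d_{G'}\rangle_{G'}=\langle g,d_G\rangle_G$ for all $g\in W_G$.

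\emph{Step 2: the degree vector from the trilinear product.} Choose an orthonormal basis $b_1,\dots,b_r$ of $W_G$, and let $P$ be the orthogonal projection onto $W_G$, so $P=\sum_i b_ib_i^{\top}$. Since $\ker A_G=W_G^\perp$, we have $A_G^2P=A_G^2$. Taking the diagonal entry at $x$ gives
\[
 \sum_{i}b_i(x)\,(A_G^2b_i)(x)=(A_G^2P)_{xx}=(A_G^2)_{xx}=d_G(x).
\]
Consequently, for every $g\in W_G$,
\[
 \langle g,d_G\rangle_G=\sum_{i=1}^r\langle g,b_i,A_G^2b_i\rangle_G .
\]
Each $b_i$ and $A_G^2b_i$ lies in $W_G$, so every term is a trilinear product of elements of $W_G$. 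Because $U$ preserves inner products and is bijective, $Ub_1,\dots,Ub_r$ is an orthonormal basis of $W_{G'}$. Because $U$ intertwines the adjacency operators, $U(A_G^2b_i)=A_{G'}^2Ub_i$. Applying trilinear-product preservation termwise and then the same diagonal identity in $G'$, we get
\[
 \langle g,d_G\rangle_G=\sum_i\langle Ug,Ub_i,A_{G'}^2Ub_i\rangle_{G'}=\langle Ug,d_{G'}\rangle_{G'} .
\]
Combining this with Step 1 gives $\langle Uf\rangle_{G'}=\langle f\rangle_G$.

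The main obstacle is Step 2: finding a vector that represents the linear form, at least after one application of $A_G$, and that can be written basis-independently in terms of the transported structure. The projection $P\ones$ represents $\langle\cdot\rangle_G$ on $W_G$, but I see no direct way to show that $U$ maps $P\ones$ to $P'\ones$. The route through $d_G=\operatorname{diag}(A_G^2)$ avoids this problem. It rests on the identity $A_G^2P=A_G^2$, which holds because $P$ is the projection onto $\im A_G$. This identity makes the sum $\sum_i b_i\circ A_G^2b_i$ independent of the choice of basis and equal to a purely combinatorial quantity.
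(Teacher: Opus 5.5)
Your proof is correct and follows essentially the same route as the paper. Both write $f=A_Gg$ with $g\in W_G$, reduce to pairing $g$ with the degree vector $A_G\ones_G$, and recover $\deg_G(x)=A_G^2(x,x)$ as a sum of trilinear products over an orthonormal basis of $W_G$, which $U$ transports. The only difference is cosmetic: the paper uses an eigenbasis $(u_i)$ and the terms $\langle h,A_Gu_i,A_Gu_i\rangle_G$, obtained from Parseval applied to $A_Ge_x$, whereas you use an arbitrary orthonormal basis and the terms $\langle g,b_i,A_G^2b_i\rangle_G$, obtained from $A_G^2P=A_G^2$.
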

\begin{proof}
Let $(u_i)_{i\in I^*}$ be an orthonormal eigenbasis of $W_G$, indexed by
a finite set $I^*$, with $A_Gu_i=\lambda_i u_i$.
Then $(Uu_i)$ is an orthonormal eigenbasis of
$W_{G'}$, since $U$ is bijective, preserves the inner product, and
satisfies $A_{G'}Uu_i=UA_Gu_i=\lambda_i Uu_i$.

Let $\ones_G\in\R^{V(G)}$ be the all-ones vector.
For the coordinate vector $e_x$, we have $A_Ge_x\in W_G$ and
$\langle u_i,A_Ge_x\rangle_G=(A_Gu_i)(x)$ by symmetry.
Parseval's identity therefore gives
$\sum_i((A_Gu_i)(x))^2=\|A_Ge_x\|^2=A_G^2(x,x)$.
Since $G$ is simple and undirected,
$A_G^2(x,x)=\deg_G(x)=(A_G\ones_G)(x)$.
Multiplying by $h(x)$ and summing over $x$ gives, for every $h\in W_G$,
\begin{equation}\label{eq:image-linear}
 \sum_i\langle h,A_Gu_i,A_Gu_i\rangle_G
 =\langle h,A_G\ones_G\rangle_G
 =\langle A_Gh\rangle_G.
\end{equation}
\Cref{eq:image-linear} also holds for $G'$ with the eigenbasis
$(Uu_i)$.
Now let $f\in W_G$. Since $A_G|_{W_G}$ has only nonzero eigenvalues,
it is invertible, so $f=A_Gg$ for some $g\in W_G$.
Hence, since $A_{G'}U=UA_G$ and $U$ preserves the trilinear
product,
\begin{equation}\label{eq:commentary-linear-transport}
 \begin{aligned}
 \langle Uf\rangle_{G'}
 &=\langle A_{G'}Ug\rangle_{G'}
 =\sum_i\langle Ug,A_{G'}Uu_i,A_{G'}Uu_i\rangle_{G'}\\
 &=\sum_i\langle Ug,UA_Gu_i,UA_Gu_i\rangle_{G'}
 =\sum_i\langle g,A_Gu_i,A_Gu_i\rangle_G
 =\langle A_Gg\rangle_G
 =\langle f\rangle_G,
 \end{aligned}
\end{equation}
which is \cref{eq:image-sum-transport}.
\end{proof}

Subdivision counts have an expansion in the spectral projectors of the target graph.
We use finite multigraphs, allowing loops and parallel edges; a loop
contributes two to the degree. For a multigraph $H$ and a tuple
$\mathbf m=(m_e)_{e\in E(H)}$ of positive integers, $H^{\mathbf m}$
is the subdivision that replaces each edge $e$ by a path of length $m_e$,
with the endpoints identified when $e$ is a loop.
In particular, $\Gamma^{\mathbf2}$ is the case $\mathbf m=(2,\ldots,2)$.
A subdividing path with $q_e$ internal vertices has length $m_e=q_e+1$.
When every $m_e\geq3$, this subdivision is simple, even if $H$ has loops
or parallel edges.

For a graph $G$, let $\Lambda_G$ be the set of
nonzero eigenvalues of $A_G$.
For $\lambda\in\Lambda_G$, $E_\lambda$ denotes the
$\lambda$-eigenspace and $P_\lambda$ its orthogonal projector.
Since we retain only the spectral data on $W_G$, we set
$E_\lambda=\{0\}$ and $P_\lambda=0$ for $\lambda\notin\Lambda_G$,
including $\lambda=0$.
We add a graph subscript, as in $E_{\lambda,G}$ and $P_{\lambda,G}$,
when needed. For a multigraph $H$ and an edge labelling
$\ell:E(H)\to\R^\times$, the \emph{labelled coefficient} is
defined by
\begin{equation}\label{eq:contraction}
 c_G(H,\ell)=\sum_{\varphi:V(H)\to V(G)}
    \prod_{e=vw\in E(H)}P_{\ell(e)}(\varphi(v),\varphi(w)).
\end{equation}
For a path of length $m_e$ with endpoint images fixed at $x,y$,
summing over the images of its internal vertices gives
$A_G^{m_e}(x,y)=\sum_{\lambda\in\Lambda_G}\lambda^{m_e}P_\lambda(x,y)$.
For lengths $m_e\geq3$, multiplying out these
expansions over all edges and summing over the original vertex images gives
\begin{equation}\label{eq:expansion}
 \hom(H^{\mathbf m},G)=\sum_{\ell:E(H)\to\Lambda_G}
       c_G(H,\ell)\prod_{e\in E(H)}\ell(e)^{m_e}.
\end{equation}
\begin{lemma}\label{lem:subdivision-coefficients}
For two graphs $G$ and $G'$ and a multigraph $H$,
$\hom(H^{\mathbf m},G)=\hom(H^{\mathbf m},G')$ for every tuple
$\mathbf m$ with $m_e\geq3$ for all $e$ if and only if $c_G(H,\ell)=c_{G'}(H,\ell)$
for every edge labelling $\ell:E(H)\to\R^\times$.
\end{lemma}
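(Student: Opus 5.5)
The plan is to view \cref{eq:expansion} as an exponential-polynomial identity in the lengths $\mathbf m$ and use linear independence of distinct exponential monomials.

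The reverse direction is immediate. If $c_G(H,\ell)=c_{G'}(H,\ell)$ for every $\ell:E(H)\to\R^\times$, then the two expansions \eqref{eq:expansion} for $G$ and $G'$ agree term by term. Here we regard both sums as ranging over all $\ell:E(H)\to\Lambda_G\cup\Lambda_{G'}$. This is legitimate because $c_G(H,\ell)=0$ whenever some label $\ell(e)\notin\Lambda_G$: in that case $P_{\ell(e),G}=0$, so every product in \eqref{eq:contraction} vanishes.

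For the forward direction, put $\Lambda=\Lambda_G\cup\Lambda_{G'}$, a finite subset of $\R^\times$, and write $d_\ell=c_G(H,\ell)-c_{G'}(H,\ell)$ for $\ell:E(H)\to\Lambda$. The hypothesis together with \eqref{eq:expansion} gives
\[
\sum_{\ell:E(H)\to\Lambda}d_\ell\prod_{e\in E(H)}\ell(e)^{m_e}=0
\]
for all $\mathbf m\in\{3,4,\ldots\}^{E(H)}$. Substituting $m_e=3+n_e$ with $n_e\ge0$ and absorbing the factor $\prod_e\ell(e)^3$ (nonzero, since $0\notin\Lambda$) into the coefficient, we obtain
\[
\sum_\ell d'_\ell\prod_e\ell(e)^{n_e}=0\quad\text{for all }\mathbf n\in\Z_{\ge0}^{E(H)},
\]
where $d'_\ell=d_\ell\prod_e\ell(e)^3$. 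It then suffices to show that the functions $\mathbf n\mapsto\prod_e\ell(e)^{n_e}$ for distinct $\ell:E(H)\to\Lambda$ are linearly independent.

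This is where the work lies, though it is not a serious obstacle. I would argue by induction on $|E(H)|$, or equivalently by a tensor-product Vandermonde argument. Restrict to $\mathbf n\in\{0,\ldots,|\Lambda|-1\}^{E(H)}$. The evaluation matrix is then the Kronecker product over $e\in E(H)$ of the Vandermonde matrix $(\lambda^{n})_{n,\lambda}$ with rows indexed by $n\in\{0,\ldots,|\Lambda|-1\}$ and columns by $\lambda\in\Lambda$. Each factor is invertible because the elements of $\Lambda$ are distinct, so the Kronecker product is invertible. Hence all $d'_\ell=0$, so $d_\ell=0$ for every $\ell$ into $\Lambda$.

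Finally, for a labelling $\ell$ taking some value outside $\Lambda$, both coefficients vanish by the remark above. Therefore $c_G(H,\ell)=c_{G'}(H,\ell)$ for every $\ell:E(H)\to\R^\times$.
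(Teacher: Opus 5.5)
Your proposal is correct and follows essentially the same route as the paper: the reverse direction extends both expansions over $\Lambda_G\cup\Lambda_{G'}$, using that the coefficients vanish outside the spectrum, and the forward direction uses Vandermonde-based linear independence of the monomials $\prod_e\ell(e)^{m_e}$. Your Kronecker product of invertible Vandermonde matrices is a compact way of writing the paper's comparison of coefficients one variable $m_e$ at a time.
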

\begin{proof}
Equality of the labelled coefficients gives equality of the counts by
\cref{eq:expansion}: since the coefficients vanish whenever a label
lies outside the target's nonzero spectrum, both sums may be taken over
$\ell:E(H)\to\Lambda_G\cup\Lambda_{G'}$.
Conversely, suppose the counts agree for all these subdivisions.
The functions $m\mapsto\lambda^m$ for
$\lambda\in\Lambda_G\cup\Lambda_{G'}$ are linearly independent on the
integers $m\geq3$, by the Vandermonde determinant.
Comparing the coefficients of $\prod_e\ell(e)^{m_e}$ in
\cref{eq:expansion} for $G$ and for $G'$, one variable $m_e\geq3$ at a time,
gives
$c_G(H,\ell)=c_{G'}(H,\ell)$ for every $\ell$.
If a label lies outside both spectra, both coefficients vanish.
\end{proof}

We obtain adjacency-image equivalence from equality of subcubic
homomorphism counts using tensor networks.
A \emph{tensor} on a Euclidean space is a multilinear form, and its
\emph{coefficients} in a basis are its values on tuples of basis
vectors.
A \emph{closed tensor network} is a finite multigraph with a tensor at
each vertex and one tensor index for each incident edge.
Loops and parallel edges are allowed; a loop uses two indices at its vertex.
The value of the network is its contraction: in a fixed orthonormal basis,
we multiply tensor coefficients and sum over the indices paired along edges.
We use the following result, called the converse of the real orthogonal
Holant theorem.
\begin{theorem*}[{\cite[Theorem~9]{Young2025}, specialised to tensor networks with all degrees two or three}]
Let $G$ and $G'$ be graphs with $\dim W_G=\dim W_{G'}>0$.
Fix orthonormal eigenbases of $W_G$ and $W_{G'}$ for the network evaluations.
For a finite multigraph $\mathcal N$ whose vertices all have degree two or
three, let $\val_G(\mathcal N)$ be the value of the closed tensor network
on $W_G$ with underlying multigraph $\mathcal N$ that assigns the adjacency form
$(f,g)\mapsto\langle f,A_Gg\rangle_G$ to each degree-two vertex and
the trilinear product $\langle\cdot,\cdot,\cdot\rangle_G$ to each
degree-three vertex. Let $\val_{G'}(\mathcal N)$ be the value of the same
construction with $G'$ in place of $G$.
Then the equality
\begin{equation}\label{eq:image-all-networks}
 \val_G(\mathcal N)=\val_{G'}(\mathcal N)
\end{equation}
holds for every such multigraph $\mathcal N$ if and only if there is a bijective
linear map $U:W_G\to W_{G'}$ that intertwines the adjacency operators and
preserves both the inner products and the trilinear products.
\end{theorem*}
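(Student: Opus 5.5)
The plan is the standard invariant-theoretic argument, run for the orthogonal group acting on pairs of tensors. The forward direction is immediate. Suppose $U:W_G\to W_{G'}$ is bijective, preserves inner products, intertwines the adjacency operators and preserves the trilinear products. Then $U$ is an orthogonal isomorphism. It carries the bilinear form $(f,g)\mapsto\langle f,A_Gg\rangle_G$ to the corresponding form for $G'$, since $\langle Uf,A_{G'}Ug\rangle_{G'}=\langle Uf,UA_Gg\rangle_{G'}=\langle f,A_Gg\rangle_G$, and it carries the trilinear form to the trilinear form. The value of a closed tensor network does not depend on the orthonormal basis used to contract it, because contraction along an edge is the metric pairing. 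Contracting the $G$-network in an arbitrary orthonormal basis $(b_j)$ and the $G'$-network in the image basis $(Ub_j)$ therefore produces coefficient-for-coefficient the same sum, and \cref{eq:image-all-networks} follows.

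For the converse, put $n=\dim W_G=\dim W_{G'}$ and use the fixed orthonormal eigenbases to identify both spaces with $\R^n$. This turns the data into two points $\tau=(\alpha,\beta)$ and $\tau'=(\alpha',\beta')$ of $V=(\R^n)^{\otimes2}\oplus(\R^n)^{\otimes3}$. Here $\alpha$ is the matrix of the adjacency form and $\beta$ is the coefficient tensor of the trilinear product. The orthogonal group $O(n)$ acts on $V$, and we must show that $\tau'\in O(n)\cdot\tau$. Given $Q\in O(n)$ with $Q\cdot\tau=\tau'$, the map $U$ defined by $Q$ preserves the inner product and the trilinear product. It also satisfies $\langle Uf,A_{G'}Ug\rangle=\langle f,A_Gg\rangle=\langle Uf,UA_Gg\rangle$ for all $f,g$, and nondegeneracy of the inner product then gives $A_{G'}U=UA_G$.

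The argument has two steps. In the first step I invoke the first fundamental theorem of invariant theory for $O(n)$. It says that the algebra $\R[V]^{O(n)}$ is spanned by complete contractions of tensor products of copies of $\alpha$ and $\beta$, where the indices are paired using the metric $\delta_{ij}$. Each such contraction is exactly $\val(\mathcal N)$ for a multigraph $\mathcal N$ with a degree-two vertex for each copy of $\alpha$ and a degree-three vertex for each copy of $\beta$; each metric pairing becomes an edge, and loops and parallel edges are allowed. A bare closed loop with no tensor would contribute the constant factor $n$, and this is harmless because $n$ is the same for both graphs. The hypothesis therefore says that every invariant polynomial takes the same value at $\tau$ and at $\tau'$.

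In the second step I use that invariants of a compact group separate orbits. If $\tau'\notin O(n)\cdot\tau$, the two orbits are disjoint compact sets. By Stone--Weierstrass there is a polynomial $p$ with $p>1/2$ on one orbit and $p<1/2$ on the other. Averaging $p$ over Haar measure on $O(n)$ gives an invariant polynomial that still separates the two orbits. This contradicts the first step.

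I expect the main obstacle to be the first step. It needs the real form of the first fundamental theorem, applied to polynomial functions of the coefficients of the tensor pair rather than of a tuple of vectors. The route I would take is polarisation: a homogeneous invariant of multidegree $(a,b)$ in $(\alpha,\beta)$ is a linear functional on $(\R^n)^{\otimes(2a+3b)}$. The vector form of the theorem says that the $O(n)$-invariant such functionals are spanned by products of $\delta$'s over perfect matchings of the $2a+3b$ tensor slots. Each matching is a pairing of tensor slots, which is precisely a multigraph on the $a+b$ tensor vertices with the required degrees. Evaluating such a matching-functional at $\alpha^{\otimes a}\otimes\beta^{\otimes b}$ gives $\val(\mathcal N)$.
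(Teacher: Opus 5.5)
Your argument is correct, but it takes a different route from the paper, which gives no proof of this statement. The paper imports it from \cite[Theorem~9]{Young2025}, the converse of the real orthogonal Holant theorem, specialised to the signature set consisting of the one binary form and the one ternary form on $W_G$.

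What you supply is a self-contained proof of that special case. The forward direction follows from basis independence of metric contractions. For the converse, you identify the degree-two/degree-three network values with the complete contractions of copies of $\alpha$ and $\beta$, via Brauer's tensor form of the first fundamental theorem for $O(n)$. These contractions span $\R[V]^{O(n)}$, and you finish with orbit separation for the compact group $O(n)$ via Stone--Weierstrass and Haar averaging. You also correctly recover $A_{G'}U=UA_G$ from preservation of the adjacency form, using surjectivity of $U$, $A_{G'}$-invariance of $W_{G'}$, and nondegeneracy.

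Two routine points deserve a sentence each when you write it up. First, $O(n)$ preserves the splitting $(\R^n)^{\otimes2}\oplus(\R^n)^{\otimes3}$, so the bihomogeneous components of an invariant are themselves invariant. Second, the full polarisation of an invariant bihomogeneous polynomial is again invariant; alternatively, replace the representing functional by its Haar average. This is what lets you apply the first fundamental theorem to it. Your remark about bare loops can be dropped, since the matchings are on tensor slots only.

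The two approaches trade off as follows. The citation is shorter and covers arbitrary finite families of real signatures of any arity. Your version makes transparent why exactly the networks with all degrees two or three appear: their values span the invariant ring of the pair $(\alpha,\beta)$. The cost is relying on the first fundamental theorem and an orbit-separation argument, which together replace the single external citation.
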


The labelled coefficients connect subdivision counts with tensor-network
evaluations, providing the link between subcubic indistinguishability and
adjacency-image equivalence.

\begin{lemma}\label{lem:characterisation}
 For any finite simple graphs $G$ and $G'$, the following are equivalent:
 \begin{enumerate}
 \item[(a)] $G\equiv_{\mathcal D_3}G'$;
 \item[(b)] $G\sim_{\mathrm{im}}G'$;
 \item[(c)] $|V(G)|=|V(G')|$ and $c_G(H,\ell)=c_{G'}(H,\ell)$ for every
 connected cubic multigraph $H$ and every edge labelling
 $\ell:E(H)\to\R^\times$.
 \end{enumerate}
\end{lemma}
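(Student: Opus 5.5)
I will prove the cycle of implications (a)$\Rightarrow$(c)$\Rightarrow$(b)$\Rightarrow$(a).

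\emph{Step (a)$\Rightarrow$(c).} Since $K_1\in\mathcal D_3$, we get $|V(G)|=|V(G')|$. For a connected cubic multigraph $H$, every subdivision $H^{\mathbf m}$ with all $m_e\ge3$ is a simple subcubic graph. Hence the counts agree for all such $\mathbf m$, and \cref{lem:subdivision-coefficients} yields $c_G(H,\ell)=c_{G'}(H,\ell)$ for every labelling $\ell$.

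\emph{Step (c)$\Rightarrow$(b).} Here I apply the converse Holant theorem. First I need $\dim W_G=\dim W_{G'}$. This is $\sum_{\lambda\neq0}\tr P_\lambda$, and $\tr P_\lambda$ is essentially a labelled coefficient of a loop. The loop graph is not cubic, so I would obtain it from the theta graph or $K_4$ (connected cubic). One option is to label all edges except a cycle with eigenvalue labels and sum them out: since $\sum_\lambda P_\lambda$ is the projector onto $W_G$, summing labels reduces everything to networks in $A^{m}$.

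More directly, I would argue that every network $\mathcal N$ with degree-two and degree-three vertices is covered. Suppress the degree-two vertices to get a cubic multigraph $H$, possibly disconnected, possibly with vertex-free cycle components. A chain of $k$ adjacency forms along an edge of $H$ contracts on $W_G$ to $\sum_\lambda\lambda^kP_\lambda$. Hence $\val_G(\mathcal N)$ is a linear combination, with coefficients depending only on the labels, of products of $c_G(H_i,\ell)$ over the components $H_i$, together with cycle factors $\sum_\lambda\lambda^k\dim E_\lambda$.

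The trilinear product on $W_G$ in an orthonormal basis equals the vertex sum of projected functions, which matches the definition of $c_G$. The cycle factors are spectral moments of $G$ restricted to nonzero eigenvalues, that is $\tr A_G^k$ for $k\ge1$. To get these from connected cubic data, I would take $H$ to be the theta (dumbbell-type) graph, label one edge, and sum labels with weights $\lambda^{k}$ on the other edges. Alternatively, I would note that cycles $C_k$, $k\ge3$, are subcubic, so the traces follow from (a); but within (c) alone I must extract them. For instance, the theta graph with labels $(\lambda,\mu,\mu)$, summed over $\mu$ with weight $1$, gives $\langle P_\lambda,\Pi\circ\Pi\rangle$ type quantities. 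This is the delicate bookkeeping step, and I expect extracting $\tr P_\lambda$ from connected cubic coefficients plus $|V|$ to be the main obstacle. I would try the handcuff graph: a loop at $u$ labelled $\lambda$, a loop at $v$ labelled $\mu$, and an edge $uv$. This gives $\sum_{x,y}P_\lambda(x,x)A^{k}(x,y)P_\mu(y,y)$; summing appropriately and combining with $|V|$ and the relation $A^2(x,x)=\deg(x)$ from \cref{lem:image-linear-preservation} should recover the diagonal data. With $\dim W$ and all network values equal, the converse Holant theorem produces $U$. The degenerate case $\dim W_G=0$ (edgeless graphs) is handled directly.

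\emph{Step (b)$\Rightarrow$(a).} Given $U$, I would show $\hom(F,G)=\hom(F,G')$ for subcubic $F$. Write $\hom(F,G)$ as a contraction on $\R^{V(G)}$ with the copy tensors $\langle\cdot\rangle,\langle\cdot,\cdot\rangle,\langle\cdot,\cdot,\cdot\rangle$ at the vertices of degree $0,1,2,3$ and $A_G$ on each edge. Because every edge carries an $A_G$, each index can be routed through $W_G$: I insert the projector onto $W_G$, which is $\sum_iu_iu_i^\top$ and is absorbed into $A_G$. The value then becomes a network on $W_G$ built from the forms $\langle\cdot\rangle$, the inner product, the trilinear product and $A_G$. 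These are all preserved by $U$, using \cref{lem:image-linear-preservation} for the linear form. Isolated vertices of $F$ contribute $|V(G)|=|V(G')|$.
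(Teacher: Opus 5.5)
Your steps (a)$\Rightarrow$(c) and (b)$\Rightarrow$(a) are correct and match the paper's argument. Your reduction of a network with a degree-three vertex to labelled coefficients of the suppressed cubic multigraph is also the paper's.

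\textbf{The gap.} It is exactly the step you flag: in (c)$\Rightarrow$(b) you never show that $G$ and $G'$ have the same nonzero eigenvalues with the same multiplicities. This is not bookkeeping. It is needed for two things:
\begin{itemize}
\item the hypothesis $\dim W_G=\dim W_{G'}$ of the converse Holant theorem, including in your degenerate case, where you must know that $G'$ is edgeless when $G$ is;
\item the pure-cycle networks, whose values are $\tr\bigl((A_G|_{W_G})^k\bigr)$.
\end{itemize}
Neither route you suggest delivers it as stated. The theta graph with labels $(\lambda,\mu,\mu)$ summed with weight $1$ produces $\sum_{x,y}P_\lambda(x,y)\sum_\mu P_\mu(x,y)^2$. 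The variant with independent weight-one sums produces $\sum_{x,y}P_\lambda(x,y)\Pi(x,y)^2$. Neither has an evident spectral meaning. The handcuff graph produces expressions such as $\sum_{x,y}P_\lambda(x,x)A_G^k(x,y)P_\mu(y,y)$. You do not show how to isolate $\tr P_\lambda$ from these, and $A_G^2(x,x)=\deg_G(x)$ alone does not do it.

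\textbf{The missing idea.} Use the off-diagonal form of simplicity, $A_G(x,y)^2=A_G(x,y)$, since the entries are $0$ or $1$.
\begin{itemize}
\item Take $\Theta$ with labels $(\lambda,\mu,\nu)$ and sum over $\mu,\nu\in\Lambda_G\cup\Lambda_{G'}$ with weight $\mu\nu$. Coefficients vanish when a label lies outside the graph's nonzero spectrum, so this common finite range is legitimate before you know the spectra agree.
\item This gives $\sum_{x,y}P_{\lambda,G}(x,y)A_G(x,y)^2=\tr(P_{\lambda,G}A_G)=\lambda\dim E_{\lambda,G}$.
\item By (c), the left-hand sides agree for $G$ and $G'$. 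Dividing by $\lambda\neq 0$ gives equal multiplicities for every nonzero eigenvalue.
\end{itemize}
Hence $\dim W_G=\dim W_{G'}$ and the cycle-network traces agree. The rest of your argument then goes through, with the label sums for the remaining networks taken over $\Lambda_G\cup\Lambda_{G'}$, or equivalently over $\Lambda_G=\Lambda_{G'}$.
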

\begin{proof}
We first prove (b)$\Rightarrow$(a).
Suppose $G\sim_{\mathrm{im}}G'$, witnessed by $U$.
Let $(u_i)_{i\in I^*}$ be an orthonormal eigenbasis of $W_G$, indexed by
a finite set $I^*$, with $A_Gu_i=\lambda_i u_i$.
Then $(Uu_i)$ is an orthonormal eigenbasis of
$W_{G'}$, since $U$ is bijective, preserves the inner product, and
satisfies $A_{G'}Uu_i=UA_Gu_i=\lambda_i Uu_i$.

Fix a subcubic graph $F$. Expanding each adjacency factor in the
homomorphism count as
$A_G(x,y)=\sum_{i\in I^*}\lambda_i u_i(x)u_i(y)$
and summing first over the images of the vertices of $F$ gives
\begin{equation}\label{eq:image-hom-expansion}
 \hom(F,G)=\sum_{\eta:E(F)\to I^*}
 \left(\prod_{e\in E(F)}\lambda_{\eta(e)}\right)
 \prod_{v\in V(F)}\bigl\langle u_{\eta(e)}:e\ni v\bigr\rangle_G.
\end{equation}
The arguments of $\langle u_{\eta(e)}:e\ni v\rangle_G$ are the vectors
indexed by the edge incidences at $v$.
At an isolated vertex the factor is $\langle\rangle_G=|V(G)|$.
The expansion for $G'$ in the basis $(Uu_i)_{i\in I^*}$ has the same
eigenvalue factors. The corresponding vertex factors agree by the
assumption $G\sim_{\mathrm{im}}G'$ for degrees $0,2,3$, and by
\cref{lem:image-linear-preservation} for degree $1$.
Hence $\hom(F,G)=\hom(F,G')$.

We next prove (a)$\Rightarrow$(c).
Suppose $G\equiv_{\mathcal D_3}G'$. The count from $K_1$ gives
$|V(G)|=|V(G')|$. Every simple subdivision of a connected cubic multigraph $H$ is subcubic,
so its homomorphism counts into $G$ and $G'$ agree.
\Cref{lem:subdivision-coefficients} gives
$c_G(H,\ell)=c_{G'}(H,\ell)$ for every nonzero edge labelling $\ell$.

We now prove (c)$\Rightarrow$(b). Suppose (c) holds.
We first show that $G$ and $G'$ have the same nonzero eigenvalues with
the same multiplicities. Let $\Theta$ be the cubic multigraph with
vertices $a,b$ and three parallel edges. We write
$c_G(\Theta;\lambda,\mu,\nu)$ for the coefficient obtained by assigning
the labels $\lambda,\mu,\nu$ to its three edges.
For every $\lambda\in\Lambda_G\cup\Lambda_{G'}$,
\begin{multline}\label{eq:theta-multiplicity}
 \sum_{\mu,\nu\in\Lambda_G\cup\Lambda_{G'}}\mu\nu c_G(\Theta;\lambda,\mu,\nu)
 =\sum_{x,y\in V(G)}P_{\lambda,G}(x,y)A_G(x,y)^2\\
 =\sum_{x,y\in V(G)}P_{\lambda,G}(x,y)A_G(x,y)
 =\tr(P_{\lambda,G}A_G)
 =\lambda\dim E_{\lambda,G}.
\end{multline}
The first equality follows from \cref{eq:contraction} and
$A_G=\sum_{\mu\in\Lambda_G\cup\Lambda_{G'}}\mu P_{\mu,G}$; the second uses that $G$ is
simple, so every adjacency entry is $0$ or $1$.
The same identity holds for $G'$. The assumed equality of the labelled coefficients equates the left-hand sides,
so division by $\lambda\ne0$ gives
$\dim E_{\lambda,G}=\dim E_{\lambda,G'}$.
Every nonzero real number outside $\Lambda_G\cup\Lambda_{G'}$ has multiplicity zero for
both graphs.
The adjacency images $W_G$ and $W_{G'}$ therefore have the same dimension.
If this dimension is zero, the unique linear map $W_G\to W_{G'}$ witnesses $G\sim_{\mathrm{im}}G'$.

Otherwise, since $\dim W_G=\dim W_{G'}$ and $|V(G)|=|V(G')|$, \cite[Theorem~9]{Young2025}, specialised to networks with all degrees two or three,
gives $G\sim_{\mathrm{im}}G'$ if \cref{eq:image-all-networks} holds
for every such network $\mathcal N$.
Since network values multiply over connected components, it is enough
to prove $\val_G(\mathcal N)=\val_{G'}(\mathcal N)$ for every connected $\mathcal N$.

Fix a connected multigraph $\mathcal N$ with all degrees two or three. Either every vertex of
$\mathcal N$ has degree two, or $\mathcal N$ has a vertex of degree three.
If every vertex of $\mathcal N$ has degree two, then $\mathcal N$ is a cycle.
The cyclic contraction of its $|V(\mathcal N)|$ adjacency forms gives
\begin{equation}\label{eq:degree-two-trace}
 \val_G(\mathcal N)=\tr\bigl((A_G|_{W_G})^{|V(\mathcal N)|}\bigr).
\end{equation}
Since $A_G|_{W_G}$ and $A_{G'}|_{W_{G'}}$ have the same spectrum,
their $|V(\mathcal N)|$th powers have equal traces. Thus $\val_G(\mathcal N)=\val_{G'}(\mathcal N)$.

Now suppose $\mathcal N$ has a vertex of degree three.
A degree-two vertex cannot carry a loop, since it would form an entire
connected component separate from the degree-three vertex, contradicting connectedness.
Thus each degree-two vertex has two non-loop incident edges.
Let $\overline{\mathcal N}$ be the connected cubic multigraph obtained from $\mathcal N$ by repeatedly
removing a degree-two vertex and replacing its two incident edges with
a single edge between its two neighbours, or a loop if they coincide.
Each edge $e$ of $\overline{\mathcal N}$ replaces a path in $\mathcal N$,
or a cycle when $e$ is a loop; let $q_e\geq0$ be the number of degree-two
vertices on that path or cycle.
\Cref{fig:network-subdivision} illustrates the network on $\mathcal N$, its cubic multigraph,
and a simple subdivision.
\input{figures/network-subdivision}
We claim that
\begin{equation}\label{eq:image-network-expansion}
 \val_G(\mathcal N)=\sum_{\ell:E(\overline{\mathcal N})\to\Lambda_G}
 \left(\prod_{e\in E(\overline{\mathcal N})}\ell(e)^{q_e}\right)c_G(\overline{\mathcal N},\ell).
\end{equation}
Contracting the tensors at the $q_e$ degree-two vertices of the path or cycle
represented by $e$ gives the bilinear form
$(f,g)\mapsto\langle f,(A_G|_{W_G})^{q_e}g\rangle_G$ on $W_G$,
including the inner product when $q_e=0$.
We evaluate the remaining contraction in the fixed orthonormal eigenbasis
$(u_i)_{i\in I^*}$ of $W_G$, with $A_Gu_i=\lambda_i u_i$.
The bilinear form for $e$ has coefficients $\lambda_i^{q_e}\delta_{ij}$.
Summing over the common eigenbasis index on each edge gives
\begin{equation}\label{eq:image-network-index-expansion}
 \val_G(\mathcal N)=\sum_{\eta:E(\overline{\mathcal N})\to I^*}
 \left(\prod_e\lambda_{\eta(e)}^{q_e}\right)
 \prod_{v\in V(\overline{\mathcal N})}
 \bigl\langle u_{\eta(e)}:e\ni v\bigr\rangle_G,
\end{equation}
where a loop contributes two arguments to its vertex factor.
Expanding each remaining trilinear coefficient as
$\langle u_i,u_j,u_k\rangle_G=\sum_xu_i(x)u_j(x)u_k(x)$ and
interchanging the sums in \cref{eq:image-network-index-expansion} gives
\begin{equation}\label{eq:image-network-vertex-expansion}
 \val_G(\mathcal N)=\sum_{\varphi:V(\overline{\mathcal N})\to V(G)}
 \prod_{e=vw\in E(\overline{\mathcal N})}
 \left(\sum_i\lambda_i^{q_e}u_i(\varphi(v))u_i(\varphi(w))\right).
\end{equation}
Grouping each inner sum by its eigenvalue gives
$\sum_{\lambda\in\Lambda_G}\lambda^{q_e}P_{\lambda,G}(\varphi(v),\varphi(w))$.
Multiplying out the edge factors now gives
\cref{eq:image-network-expansion}, by \cref{eq:contraction}.

By (c), $c_G(\overline{\mathcal N},\ell)=c_{G'}(\overline{\mathcal N},\ell)$ for every
nonzero edge labelling $\ell$.
Since $\Lambda_G=\Lambda_{G'}$, \cref{eq:image-network-expansion} gives
\begin{equation}\label{eq:image-network-equality}
 \val_G(\mathcal N)
 =\sum_{\ell:E(\overline{\mathcal N})\to\Lambda_G}
 \left(\prod_e\ell(e)^{q_e}\right)c_{G'}(\overline{\mathcal N},\ell)
 =\val_{G'}(\mathcal N).
\end{equation}
Thus \cref{eq:image-all-networks} holds. By \cite[Theorem~9]{Young2025} there
is a bijective linear map $U$ that intertwines the adjacency operators and
preserves both the inner products and the trilinear products;
since also $|V(G)|=|V(G')|$, $G\sim_{\mathrm{im}}G'$.
\end{proof}

\subsection{Reconstructing graphs of nullity at most one}
\label{sec:nullity-one-reconstruction}

We reconstruct graphs by comparing their vertex rows in orthonormal eigenbases.
For a graph $G$, we extend an orthonormal eigenbasis of $W_G$ to an
orthonormal eigenbasis $(u_i)_{i\in I}$ of $\R^{V(G)}$.
Let $\lambda_i$ be the eigenvalue of $u_i$ for each $i\in I$,
and let $I^{*}=\{i\in I:\lambda_i\ne0\}$.
Let $\gamma_1$ and $\gamma_3$ be the coefficient arrays of the
linear form and trilinear product in this eigenbasis, given by
\begin{equation}
  \label{eq:tensors}
  \begin{aligned}
  \gamma_1(i)&=\langle u_i\rangle_G, &
  \gamma_3(i,j,k)&=\langle u_i,u_j,u_k\rangle_G
  \end{aligned}
\end{equation}
for $i,j,k\in I$. For $x\in V(G)$, let $r_x=(u_i(x))_{i\in I}\in\R^I$ be
the row of $x$. These rows form an orthonormal basis of $\R^I$.
For $r\in\R^I$, let $r^{\otimes3}$ denote the array with entries
$r(i)r(j)r(k)$ for $i,j,k\in I$.
The rows satisfy
\begin{equation}
  \label{eq:rows}
  \begin{aligned}
  \gamma_3 &= \sum_{x \in V(G)} r_x^{\otimes 3}, &
  A_G(x,y) &= \sum_{i \in I} \lambda_i r_x(i) r_y(i).
  \end{aligned}
\end{equation}
We write $\gamma_3^{*}$ for the restriction of $\gamma_3$ to
indices in $I^{*}$.
In the eigenbasis $(u_i)_{i\in I^*}$ of $W_G$, the trilinear product
$\langle\cdot,\cdot,\cdot\rangle_G$ has coefficient array $\gamma_3^*$.
When $G$ has nullity one, we index the basis so that $I=I^*\sqcup\{0\}$
and $u_0$ spans $\ker A_G$.
In particular, $\gamma_1(0)=\sum_{x\in V(G)}u_0(x)$.
The coefficients of $\gamma_3$ outside
$\gamma_3^*$ are the $\gamma_3(i,j,k)$ with at least one
index equal to $0$.
For such a graph, let $\tilde{r}_x=(u_i(x))_{i\in I^*}$ be
its reduced row at $x$.
Since $\tilde{r}_x$ is $r_x$ with its kernel coordinate omitted,
orthonormality of the rows gives
\begin{equation}\label{eq:gram}
 \langle\tilde{r}_x,\tilde{r}_y\rangle=\delta_{xy}-u_0(x)u_0(y).
\end{equation}

We will use the following neighbourhood constraint in the reconstruction argument.
Suppose $G$ has nullity one and $|\supp u_0|>2$.
For distinct $x,y\in\supp u_0$, let $e_x,e_y\in\R^{V(G)}$ be the
standard coordinate vectors.
The vector $e_x-e_y$ has support of size two, so it is not a scalar multiple of
$u_0$. Thus $A_G(e_x-e_y)\ne0$, which gives
\begin{equation}\label{eq:support-neighbourhoods}
 N_G(x)\ne N_G(y)\quad\text{for distinct }x,y\in\supp u_0.
\end{equation}

The following lemma generalises the fact that two orthonormal bases with
the same cubic tensor agree up to permutation~\cite[Theorem~3.2]{ZhangGolub2001}.
When the corresponding eigenvalues agree, such a permutation of the
vertex rows induces a graph isomorphism by \cref{eq:rows}.
The lemma also identifies unit reduced rows from $\gamma_3^*$.
\begin{lemma}\label{lem:odeco}
 Let $X,I$ be finite sets, and let $R\in\R^{X\times I}$ be a matrix
 with orthonormal columns. Write $r_x\in\R^I$ for its row indexed by
 $x\in X$, and define $\kappa:\R^I\to\R$ by
 $\kappa(\xi)=\sum_{x\in X}\langle r_x,\xi\rangle^3$.
 Then
 \begin{equation}\label{eq:cubic-unit-rows}
  \{\xi:\lVert \xi\rVert=1,\ \kappa(\xi)=1\}
  =\{r_x:\lVert r_x\rVert=1\}.
 \end{equation}
 In particular, if $(r_x)_{x\in X}$ and $(r'_y)_{y\in Y}$ are
 orthonormal bases of the same Euclidean space and
 $\sum_x r_x^{\otimes3}=\sum_y(r'_y)^{\otimes3}$, then there is a
 bijection $\pi:X\to Y$ with $r'_{\pi(x)}=r_x$ for every $x\in X$.
\end{lemma}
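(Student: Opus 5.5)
The first claim, \eqref{eq:cubic-unit-rows}, is a statement about the set where $\kappa$ attains the value $1$ on the unit sphere. The plan is to bound $\kappa(\xi)$ for unit $\xi$ and then analyse when the bound is tight. Put $c_x=\langle r_x,\xi\rangle$, so that $c=R\xi\in\R^X$. Since $R$ has orthonormal columns, $\lVert R\xi\rVert=\lVert\xi\rVert=1$, which gives $\sum_x c_x^2=1$.

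Next, $RR^{\top}$ is the orthogonal projector onto the column space, so its diagonal entries $\lVert r_x\rVert^2$ are at most $1$. Cauchy--Schwarz then gives $|c_x|\le\lVert r_x\rVert\lVert\xi\rVert\le1$. Combining the two facts,
\[
\kappa(\xi)=\sum_x c_x^3\le\sum_x |c_x|\,c_x^2\le\sum_x c_x^2=1 .
\]
Equality forces every nonzero $c_x$ to satisfy $|c_x|=1$ and $c_x>0$, so every nonzero $c_x$ equals $1$. Because $\sum_x c_x^2=1$, exactly one index $x$ has $c_x=1$ and all other $c_y$ vanish. In particular $c_x=\langle r_x,\xi\rangle=1$ with $\lVert r_x\rVert\le1=\lVert\xi\rVert$, so the equality case of Cauchy--Schwarz forces $\lVert r_x\rVert=1$ and $\xi=r_x$. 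This proves the inclusion ``$\subseteq$''.

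For ``$\supseteq$'', take a row with $\lVert r_x\rVert=1$. The $x$-th diagonal entry of the projector $RR^{\top}$ equals $1$, so the $x$-th row of $RR^{\top}$ is the unit vector $e_x$. Hence $\langle r_y,r_x\rangle=\delta_{xy}$, and so $\kappa(r_x)=1$. The one step needing care is this fact that a unit-norm row is orthogonal to all the other rows; it follows from idempotence, because $\sum_y(RR^{\top})_{xy}^2=(RR^{\top})_{xx}=1$.

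For the ``in particular'' claim, take $R$ with rows $r_x$. Since the rows form an orthonormal basis, $R$ is square orthogonal and its columns are orthonormal. Every row has norm $1$, so by \eqref{eq:cubic-unit-rows} the set $\{r_x\}$ is exactly the set of unit $\xi$ with $\kappa(\xi)=1$. Now $\kappa(\xi)=\sum_x r_x^{\otimes3}(\xi,\xi,\xi)$ depends only on the tensor $\sum_x r_x^{\otimes3}$. The same reasoning applied to $(r'_y)$ with the same tensor shows that $\{r_x\}=\{r'_y\}$ as sets. Both families consist of distinct orthonormal vectors, so there is a bijection $\pi:X\to Y$ with $r'_{\pi(x)}=r_x$.
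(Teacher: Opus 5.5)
Your proof is correct and follows essentially the same route as the paper. With $c=R\xi$ you bound $\kappa(\xi)=\sum_x c_x^3\le\sum_x c_x^2=1$ and read off $c=e_x$ from the equality case. For the converse you use that a unit row satisfies $Rr_x=e_x$; your idempotence argument for $RR^{\top}$ is the same fact. The remaining differences are cosmetic: the paper gets $c_x\le 1$ directly from $\sum_x c_x^2=1$ and recovers $\xi=R^{\top}c=r_x$, whereas you use Cauchy--Schwarz with the projector diagonal and its equality case.
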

\begin{proof}
 Let $\xi$ be a unit vector and put $c=R\xi$.
 Since the columns of $R$ are orthonormal, we have $\sum_xc_x^2=1$,
 so $c_x\leq1$ for every $x$. Hence
 \begin{equation}\label{eq:cubic-unit-defect}
  1-\kappa(\xi)=\sum_x c_x^2(1-c_x)\geq0.
 \end{equation}
 Equality holds exactly when $c=e_x$ for some $x$,
 where $e_x$ is the standard coordinate vector.
 Then $\xi=R^{\top}c=r_x$.
 Conversely, if $\lVert r_x\rVert=1$, then $Rr_x$ is a unit vector
 whose $x$-coordinate is $1$, so $Rr_x=e_x$ and
 $\kappa(r_x)=1$. This proves \cref{eq:cubic-unit-rows}.
 The final assertion follows because equal cubic tensors identify the
 same set of unit rows.
\end{proof}

We now prove \cref{lem:reconstruction}, which converts adjacency-image
equivalence into graph isomorphism under the nullity bound.
\Cref{thm:main} then follows from the characterisation in
\cref{lem:characterisation}.

\begin{lemma}\label{lem:reconstruction}
 Let $G$ have nullity at most one, and let $G'$ be a graph.
 If $G\sim_{\mathrm{im}}G'$, then $G\cong G'$.
\end{lemma}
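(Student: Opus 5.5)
The plan is to turn the witness $U$ into a bijection of vertex rows and then read off adjacency from \cref{eq:rows}. First the two graphs are aligned. Let $U$ witness $G\sim_{\mathrm{im}}G'$ and fix an orthonormal eigenbasis $(u_i)_{i\in I^*}$ of $W_G$. As in the proof of \cref{lem:image-linear-preservation}, $(Uu_i)_{i\in I^*}$ is an orthonormal eigenbasis of $W_{G'}$ with the same eigenvalues $\lambda_i$. Hence $\dim W_G=\dim W_{G'}$. Since $|V(G)|=|V(G')|$, the graphs $G$ and $G'$ have the same nullity. In these bases the arrays $\gamma_3^*$ agree because $U$ preserves the trilinear product. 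The arrays $\gamma_1$ restricted to $I^*$ agree by \cref{lem:image-linear-preservation}.

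\emph{Nullity zero.} Here $I=I^*$, so the rows $r_x$ and $r'_y$ are orthonormal bases of $\R^{I}$ with equal cubic tensors $\sum_x r_x^{\otimes3}=\gamma_3=\sum_y (r'_y)^{\otimes3}$. \Cref{lem:odeco} gives a bijection $\pi$ with $r'_{\pi(x)}=r_x$. Since the eigenvalues agree, \cref{eq:rows} gives $A_{G'}(\pi x,\pi y)=A_G(x,y)$, so $\pi$ is an isomorphism.

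\emph{Nullity one.} Since $\lambda_0=0$, \cref{eq:rows} gives $A_G(x,y)=\sum_{i\in I^*}\lambda_i\tilde r_x(i)\tilde r_y(i)$. It therefore suffices to find a bijection $\pi$ with $\tilde r'_{\pi(x)}=\tilde r_x$. The matrix $\tilde R$ with rows $\tilde r_x$ has the orthonormal columns $u_i$, $i\in I^*$, so \cref{lem:odeco} applies to it with $\kappa$ determined by $\gamma_3^*$. By \cref{eq:gram}, $\tilde r_x$ is a unit vector exactly when $x\notin S:=\supp u_0$. Hence \cref{eq:cubic-unit-rows} identifies the set of unit reduced rows of $G$ with that of $G'$. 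Distinct vertices outside $S$ have orthogonal reduced rows, so the unit rows are distinct and this gives a bijection from $V(G)\setminus S$ onto $V(G')\setminus S'$ preserving reduced rows. It follows that $|S|=|S'|$.

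It remains to match the rows over $S$. Let $V$ be the span of the unit reduced rows, which is common to both graphs. The vectors $\tilde r_x$ for $x\in S$ lie in $V^\perp$, which has dimension $|S|-1$. By \cref{eq:gram} they form a Parseval frame there, with Gram matrix $I-u_0|_Su_0|_S^{\top}$. Subtracting the matched contributions, the following quantities coincide for $G$ and $G'$:
\begin{itemize}
 \item the cubic sum $\sum_{x\in S}\tilde r_x^{\otimes3}$ on $V^\perp$;
 \item the linear sum $\sum_{x\in S}\tilde r_x$, from $\gamma_1$ restricted to $I^*$;
 \item for every matched vertex $y$ and every $x\in S$, the adjacency function $x\mapsto A_G(y,x)$, expressed through $\langle \tilde r_x, D\tilde r_y\rangle$ with $D=\diag(\lambda_i)$.
\end{itemize}

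The small cases are handled directly. If $|S|=1$, then $u_0$ is a coordinate vector, so $x$ is isolated and $\tilde r_x=0$ in both graphs. If $|S|=2$, then $V^\perp=0$, so both reduced rows vanish in both graphs.

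For $|S|>2$ I would lift the frame to an orthonormal basis $w_x=(\tilde r_x,u_0(x))$ of $V^\perp\oplus\R$. I would then try to recover $u_0|_S$, up to sign, from the known data and reapply \cref{lem:odeco} to $(w_x)$. To pin down $u_0$, I would try an extremal argument in the style of \cref{eq:cubic-unit-defect}. The aim is to characterise the normalised vectors $\tilde r_x/\lVert\tilde r_x\rVert$ as the maximisers of $\sum_{x\in S}\langle\tilde r_x,\xi\rangle^3$ on the unit sphere of $V^\perp$, with the norms $\lVert \tilde r_x\rVert^2=1-u_0(x)^2$ recovered as the maximum values. Where this characterisation is ambiguous, I would use the neighbourhood constraint \cref{eq:support-neighbourhoods}, since distinct support vertices have distinct neighbourhoods, to separate candidate rows. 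Establishing this uniqueness for the non-orthonormal frame over $S$ is the main obstacle. Everything before it follows formally from \cref{lem:odeco} and \cref{lem:image-linear-preservation}.
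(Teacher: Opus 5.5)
Your alignment of the eigenbases, the nonsingular case, and the matching of the unit reduced rows outside $S=\supp u_0$ via \cref{lem:odeco} are correct and agree with the paper. The genuine gap is the case $|S|\geq3$. This is the heart of the lemma, and you leave it open. Moreover, the extremal characterisation you propose is false.

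Take the configuration the paper must handle in \cref{lem:four-vertex-reconstruction}. There $u_0|_S=(\tfrac12,\tfrac12,-\tfrac12,-\tfrac12)$, and the support rows are $\tfrac12\sigma\pm\tfrac1{\sqrt2}f$ and $\tfrac12\sigma\pm\tfrac1{\sqrt2}g$, with $\sigma,f,g$ an orthonormal basis of $V^\perp$. For a unit $\xi\in V^\perp$ with $s=\langle\sigma,\xi\rangle$,
\[
 \sum_{x\in S}\langle\tilde r_x,\xi\rangle^3
 =\tfrac12 s^3+\tfrac32 s\bigl(\langle f,\xi\rangle^2+\langle g,\xi\rangle^2\bigr)
 =\tfrac32 s-s^3 .
\]
So the maximisers form the whole circle $s=1/\sqrt2$, whereas the normalised rows have $s=1/\sqrt3$. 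Worse, replacing $(f,g)$ by any rotation within $\spn(f,g)$ leaves unchanged the cubic sum, the linear sum $2\sigma$, and the Gram matrix. Hence no argument based on your first two invariants can recover the rows. The paper fixes the rotation using your third kind of data, through the integrality constraint \cref{eq:integrality}. It applies this at a vertex $z\notin S$ adjacent to exactly one of $a,b$ and exactly one of $c,d$, whose existence it derives from \cref{eq:support-neighbourhoods}.

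For $|S|\geq5$, and whenever $\gamma_1(0)\neq0$, you are missing a way to recover the kernel data. This means $\gamma_1(0)$ and $\mathsf C_0(j,k)=\gamma_3(0,j,k)$, up to the sign of $u_0$. These data are exactly the cubic tensor of your lifted rows $w_x$, after which \cref{lem:odeco} finishes as you intended. The paper obtains them from identities expressing orthonormality of the full eigenbasis:
\begin{enumerate}
 \item $\gamma_1(0)^2=|V(G)|-\sum_{i\in I^*}\gamma_1(i)^2$ is invariant.
 \item If $\gamma_1(0)\neq0$, then $\sum_m\gamma_1(m)\gamma_3(m,j,k)=\delta_{jk}$ determines $\mathsf C_0$.
 \item If $\gamma_1(0)=0$, the fourth-moment identity \cref{eq:id-assoc}, antisymmetrised so that its unknown right-hand side cancels, expresses every $2\times2$ minor of $\mathsf C_0$ through $\gamma_3^*$. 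This determines $\mathsf C_0$ up to sign when $\rank\mathsf C_0\geq3$.
 \item If $\rank\mathsf C_0\leq2$, Sylvester's inequality forces $|S|\leq4$, and \cref{eq:support-neighbourhoods} excludes $|S|=3$.
\end{enumerate}

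Finally, your $|S|=2$ case is wrong as written. There $\dim V^\perp=|S|-1=1$, not $0$. Nullity one forces $u_0=\pm(e_x-e_y)/\sqrt2$, so $\tilde r_x=\tilde r_y\neq0$ with norm $1/\sqrt2$. You can repair this case with your linear-sum invariant, which gives $2\tilde r_x=2\tilde r'_{x'}$.
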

\begin{proof}
  We fix a bijective linear map $U:W_G\to W_{G'}$ witnessing
  $G\sim_{\mathrm{im}}G'$.
  The graphs $G$ and $G'$ have the same number of vertices because
  $G\sim_{\mathrm{im}}G'$. Since $U$ is bijective and the graphs have the
  same number of vertices, the rank-nullity theorem gives equal nullities,
  at most one.
  We fix an orthonormal eigenbasis
  $(u_i)_{i\in I}$ of $\R^{V(G)}$.
  We set $u'_i=Uu_i$ for $i\in I^*$.
  By \cref{eq:adjacency-image-equivalence}, these vectors form an orthonormal
  eigenbasis of $W_{G'}$. We extend it by an orthonormal basis of $\ker A_{G'}$.
  The resulting family $(u'_i)_{i\in I}$ is an orthonormal eigenbasis of $\R^{V(G')}$.
  Then $u_i$ and $u'_i$ have the same eigenvalue $\lambda_i$ for every
  $i\in I$. We use primes for quantities associated with $G'$.
  By \cref{eq:adjacency-image-equivalence} and
  \cref{lem:image-linear-preservation}, for $i,j,k\in I^*$,
  \begin{equation}\label{eq:transported-tensors}
    \begin{aligned}
      \gamma'_1(i)&=\langle Uu_i\rangle_{G'}
        =\langle u_i\rangle_G=\gamma_1(i),\\
      \gamma'_3(i,j,k)&=\langle Uu_i,Uu_j,Uu_k\rangle_{G'}
        =\langle u_i,u_j,u_k\rangle_G=\gamma_3(i,j,k).
    \end{aligned}
  \end{equation}
  For $x\in V(G)$ and $y\in V(G')$, let
  $\tilde{r}_x=(u_i(x))_{i\in I^*}$ and
  $\tilde{r}'_y=(u'_i(y))_{i\in I^*}$ be the reduced rows.
  A bijection $\pi:V(G)\to V(G')$ with $\tilde{r}'_{\pi(x)}=\tilde{r}_x$ is a
  graph isomorphism. The zero-eigenvalue terms in the second identity of
  \cref{eq:rows} vanish, so for $x,y\in V(G)$ we have
  \begin{equation}\label{eq:row-isomorphism}
    A_{G'}(\pi(x),\pi(y))
    =\sum_{i\in I^*}\lambda_i\tilde{r}'_{\pi(x)}(i)\tilde{r}'_{\pi(y)}(i)
    =\sum_{i\in I^*}\lambda_i\tilde{r}_x(i)\tilde{r}_y(i)=A_G(x,y).
  \end{equation}
  If $\gamma_3=\gamma'_3$, then by \cref{lem:odeco} there is a
  bijection $\pi$ with $r'_{\pi(x)}=r_x$, hence $\tilde{r}'_{\pi(x)}=\tilde{r}_x$.
  If the graphs are nonsingular, then $I^*=I$, so
  \cref{eq:transported-tensors} already gives $\gamma_3=\gamma'_3$
  and hence $G\cong G'$.

  We therefore assume that both graphs have nullity one.
  We index $(u_i)$ and $(u'_i)$ so that $I=I^*\sqcup\{0\}$, with unit kernel
  vectors $u_0,u'_0$.
  We first compare the coefficients involving index $0$, which are
  not compared in \cref{eq:transported-tensors}.
  The \emph{missing coefficients} are the linear coefficient $\gamma_1(0)$
  and, up to symmetry, the cubic coefficients $\gamma_3(0,0,0)$, $\gamma_3(0,0,k)$, and
  $\gamma_3(0,j,k)$ for $j,k\in I^*$.
  Once $(u_i)_{i\in I^*}$ and $(u'_i)_{i\in I^*}$ are fixed,
  the only remaining choices are the signs of $u_0$ and $u'_0$.
  If the signs of $u_0,u'_0$ can be chosen so that these coefficients
  agree with their primed counterparts, then \cref{eq:transported-tensors}
  gives $\gamma_3=\gamma'_3$, hence $G\cong G'$ as above.
  Replacing $u_0$ by $-u_0$ negates $\gamma_1(0)$ and
  $\gamma_3(0,j,k)$. Replacing $u'_0$ by $-u'_0$
  has the same effect on the primed coefficients.

  Orthonormality of the full eigenbasis gives, for $x,y\in V(G)$ and $j,k\in I$,
  \begin{equation}\label{eq:full-basis-orthogonality}
    \begin{aligned}
      \sum_{m\in I}u_m(x)u_m(y)&=\delta_{xy}, &
      \sum_{x\in V(G)}u_j(x)u_k(x)&=\delta_{jk}.
    \end{aligned}
  \end{equation}
  Substituting the definitions in \cref{eq:tensors} and using
  \cref{eq:full-basis-orthogonality} gives the following identities
  for all $i,j,k,l\in I$:
  \begin{gather}
    \label{eq:id-norm}
    \sum_{m \in I} \gamma_1(m)^{2} = |V(G)|, \\
    \label{eq:id-unit}
    \sum_{m \in I} \gamma_1(m) \gamma_3(m,j,k) = \delta_{jk}, \\
    \label{eq:id-trace}
    \sum_{m \in I} \gamma_3(j,m,m) = \gamma_1(j), \\
    \label{eq:id-assoc}
    \sum_{m \in I} \gamma_3(i,j,m) \gamma_3(m,k,l)
      = \sum_{x \in V(G)} u_i(x) u_j(x) u_k(x) u_l(x).
  \end{gather}
  Separating the term $m=0$ in \cref{eq:id-trace} gives
  \begin{equation}
    \label{eq:trace-completion}
    \begin{aligned}
      \gamma_3(0,0,k) &= \gamma_1(k) - \sum_{i\in I^*}\gamma_3(k,i,i)
        &&\text{for } k\in I^*,\\
      \gamma_3(0,0,0) &= \gamma_1(0) - \sum_{i\in I^*}\gamma_3(0,i,i).
    \end{aligned}
  \end{equation}
  The first identity involves only coefficients at indices in $I^*$ on
  the right-hand side, so $\gamma'_3(0,0,k)=\gamma_3(0,0,k)$ by
  \cref{eq:transported-tensors}. The second determines
  $\gamma_3(0,0,0)$ from $\gamma_1(0)$ and the coefficients
  $\gamma_3(0,j,k)$. Thus agreement of all missing coefficients
  reduces to agreement of $\gamma_1(0)$ and $\gamma_3(0,j,k)$
  with their primed counterparts.

  For the case analysis, let $\mathsf C_0$ be the symmetric
  $I^*\times I^*$ matrix defined by
  \begin{equation}\label{eq:unknowns}
    \mathsf C_0(j,k) = \gamma_3(0,j,k).
  \end{equation}
  Separating the term $m=0$ in \cref{eq:id-norm} gives
  $\gamma_1(0)^2=|V(G)|-\sum_{i\in I^*}\gamma_1(i)^2$, so
  $\gamma_1(0)^2=(\gamma'_1(0))^2$ by \cref{eq:transported-tensors}.
  We divide the remaining proof into the cases $\gamma_1(0)\ne0$ (Case A)
  and $\gamma_1(0)=0$ (Case B).
  Within Case B, we distinguish $\rank\mathsf C_0\geq3$ (Case B1)
  from $\rank\mathsf C_0\leq2$ (Case B2).
  In Cases A and B1, we show that a choice of sign for $u'_0$ gives
  $\gamma'_1(0)=\gamma_1(0)$ and $\mathsf C'_0=\mathsf C_0$.
  In Case B2, we use the graph structure to show that $u_0$ has
  support of size two or four. In both subcases, we show directly
  that $(\tilde{r}_x)_{x\in V(G)}$ and $(\tilde{r}'_y)_{y\in V(G')}$ agree as multisets.

  \emph{Case A: $\gamma_1(0) \neq 0$.} We choose the sign of $u'_0$ so
  that $\gamma'_1(0)=\gamma_1(0)$. Separating the term $m=0$ in
  \cref{eq:id-unit} gives
  \begin{equation}
    \label{eq:caseA}
    \gamma_1(0) \mathsf{C}_0(j,k) = \delta_{jk} - \sum_{i \in I^{*}} \gamma_1(i) \gamma_3(i,j,k).
  \end{equation}
  The right-hand side for $G'$ equals the right-hand side for $G$ by
  \cref{eq:transported-tensors}, and dividing by
  $\gamma_1(0)=\gamma'_1(0)\neq0$ gives $\mathsf C'_0=\mathsf C_0$.
  Hence $\gamma_3=\gamma'_3$.

  \emph{Case B: $\gamma_1(0) = 0$.} Then $\gamma'_1(0)=0$, since
  $(\gamma'_1(0))^2=\gamma_1(0)^2$. Here \cref{eq:id-unit} no longer
  determines $\mathsf C_0$, so we distinguish two cases according to
  the rank of $\mathsf C_0$.

  \emph{Case B1: $\rank \mathsf{C}_0 \geq 3$.} We show that
  $\mathsf C'_0=\pm\mathsf{C}_0$ by comparing their $2\times2$ minors.
  For $i,j,k,l \in I^{*}$, let $Q(i,j;k,l)$ be the sum of the terms with
  $m\neq0$ in \cref{eq:id-assoc},
  \begin{equation}\label{eq:q-tensor}
   Q(i,j;k,l) = \sum_{m \in I^{*}} \gamma_3(i,j,m) \gamma_3(m,k,l),
  \end{equation}
  which involves only $\gamma_3^*$.
  Separating the term $m=0$ in \cref{eq:id-assoc} gives
  \begin{equation}
    \label{eq:assoc-split}
    \mathsf{C}_0(i,j)\mathsf{C}_0(k,l)+Q(i,j;k,l)
    =\sum_{x \in V(G)} u_i(x) u_j(x) u_k(x) u_l(x).
  \end{equation}
  The right-hand side is symmetric in $(i,j,k,l)$, so subtracting from
  \cref{eq:assoc-split} the same identity with $j$ and $k$ exchanged
  gives
  \begin{equation}
    \label{eq:minors}
    \mathsf{C}_0(i,j) \mathsf{C}_0(k,l) - \mathsf{C}_0(i,k) \mathsf{C}_0(j,l) = Q(i,k;j,l) - Q(i,j;k,l).
  \end{equation}
  The left-hand side is the minor of $\mathsf{C}_0$ with rows $i,l$ and
  columns $j,k$. The right-hand side involves only $\gamma_3^*$,
  which $G$ and $G'$ share by \cref{eq:transported-tensors}.
  Thus $\mathsf C_0$ and $\mathsf C'_0$ have equal corresponding $2\times2$
  minors, hence equal exterior squares.
  Since $\rank\mathsf C_0\geq3$, \cite[Corollary~4.1]{Greenhill1999}
  gives $\mathsf C'_0=\pm\mathsf C_0$.
  We choose the sign of $u'_0$ so that $\mathsf C'_0=\mathsf{C}_0$.
  Hence $\gamma_3=\gamma'_3$.

  \emph{Case B2: $\rank \mathsf{C}_0 \leq 2$.} We show that
  $\{\tilde{r}_z:z\notin\supp u_0\}=\{\tilde{r}'_z:z\notin\supp u'_0\}$.
  Let $\tilde R$ be the matrix with rows $\tilde{r}_x$. Its columns are
  orthonormal, and \cref{eq:gram} gives
  $\lVert\tilde{r}_x\rVert^2=1-u_0(x)^2$.
  Thus $\tilde{r}_x$ is a unit vector exactly when $x\notin\supp u_0$.
  The same holds for $\tilde{r}'_y$ and $\supp u'_0$.
  Let $\kappa$ and $\kappa'$ be the cubic forms associated with the coefficient
  arrays $\gamma_3^*$ and $(\gamma'_3)^*$, respectively, so
  \begin{equation}\label{eq:reduced-cubic}
   \kappa(\xi)=\sum_x\langle\tilde{r}_x,\xi\rangle^3
   =\sum_y\langle\tilde{r}'_y,\xi\rangle^3=\kappa'(\xi)
  \end{equation}
  by \cref{eq:transported-tensors}.
  By \cref{lem:odeco} and \cref{eq:reduced-cubic},
  \begin{equation}\label{eq:outside-rows}
   \{\tilde{r}_z:z\notin\supp u_0\}
   =\{\xi:\lVert \xi\rVert=1,\ \kappa(\xi)=1\}
   =\{\tilde{r}'_z:z\notin\supp u'_0\}.
  \end{equation}
  Each of these row families is orthonormal by \cref{eq:gram}, so its
  members are distinct. Since $G$ and $G'$ have the same number of
  vertices, \cref{eq:outside-rows} also gives
  $|\supp u'_0|=|\supp u_0|$.
  We relabel the vertices of $G'$ outside $\supp u'_0$ so that
  $\tilde{r}'_z=\tilde{r}_z$ for every $z\notin\supp u_0$.

  It remains to compare $(\tilde{r}_x)_{x\in\supp u_0}$ and
  $(\tilde{r}'_y)_{y\in\supp u'_0}$ as multisets.
  Their sums agree by
  \cref{eq:transported-tensors} and \cref{eq:outside-rows},
  \begin{equation}\label{eq:support-row-sum}
    \sum_{x\in\supp u_0}\tilde{r}_x
    = (\gamma_1(i))_{i\in I^*}-\sum_{z\notin\supp u_0}\tilde{r}_z
    = \sum_{y\in\supp u'_0}\tilde{r}'_y.
  \end{equation}
  We claim that $|\supp u_0|\in\{2,4\}$.
  Since $\mathsf{C}_0 = \tilde R^{\top} \diag(u_0) \tilde R$ and
  $\rank\tilde R=|V(G)|-1$, two applications of Sylvester's rank inequality give
  \begin{equation}\label{eq:support-rank-bound}
    \begin{aligned}
      \rank\mathsf C_0
      &\geq\rank(\diag(u_0)\tilde R)-1\\
      &\geq\rank\diag(u_0)-2=|\supp u_0|-2.
    \end{aligned}
  \end{equation}
  Hence
  $|\supp u_0| \leq \rank \mathsf{C}_0 + 2 \leq 4$.
  The support has at least two vertices because $u_0\ne0$ and
  $\gamma_1(0)=0$.
  Suppose it has three vertices. For every nonempty proper subset
  $T$ of $\supp u_0$, we have $\sum_{x\in T}u_0(x)\ne0$, since
  the three entries are nonzero and their sum is zero.
  The equation $(A_Gu_0)(w)=\sum_{x\in N_G(w)}u_0(x)=0$ therefore
  forces every vertex $w$ to be adjacent to all three support
  vertices or to none. Thus $N_G(x)=N_G(y)$ for any two distinct
  support vertices $x,y$, contradicting \cref{eq:support-neighbourhoods}.

  \emph{Case B2 with $|\supp u_0|=2$.} Write
  $\supp u_0=\{x,y\}$ and $\supp u'_0=\{x',y'\}$.
  Since $u_0(x)=-u_0(y)\ne0$, we have
  $0=\langle u_0,u_i\rangle_G=u_0(x)(u_i(x)-u_i(y))$ for all $i\in I^*$,
  so $\tilde{r}_x=\tilde{r}_y$. The same holds for $u'_0$.
  By \cref{eq:support-row-sum},
  $\tilde{r}_x+\tilde{r}_y=\tilde{r}'_{x'}+\tilde{r}'_{y'}$, hence
  $\tilde{r}_x=\tilde{r}_y=\tilde{r}'_{x'}=\tilde{r}'_{y'}$.
  Together with \cref{eq:outside-rows}, this gives a vertex bijection
  $\pi$ satisfying $\tilde{r}'_{\pi(v)}=\tilde{r}_v$ for every $v\in V(G)$.
  Hence $G\cong G'$ by \cref{eq:row-isomorphism}.

  \emph{Case B2 with $|\supp u_0|=4$.}
  Both kernel vectors have support of size four and sum to zero.
  Their reduced rows outside the supports agree by \cref{eq:outside-rows}, and their
  reduced-row sums on the supports agree by \cref{eq:support-row-sum}.
  Thus \cref{lem:four-vertex-reconstruction}, proved below, identifies the remaining
  reduced rows as multisets.
  Combining the two correspondences gives a bijection $\pi$ with
  $\tilde{r}'_{\pi(x)}=\tilde{r}_x$, which is an isomorphism by \cref{eq:row-isomorphism}.
\end{proof}

\begin{lemma}\label{lem:four-vertex-reconstruction}
Let $G$ and $G'$ have nullity one and orthonormal eigenbases
$(u_i)_{i\in I}$ and $(u'_i)_{i\in I}$, respectively, with the same
corresponding eigenvalues $\lambda_i$.
Index both bases by $I=I^*\sqcup\{0\}$, with kernel vectors $u_0,u'_0$,
and write $\tilde{r}_x,\tilde{r}'_y\in\R^{I^*}$ for the reduced rows.
Suppose that $|\supp u_0|=|\supp u'_0|=4$ and
$\sum_xu_0(x)=\sum_yu'_0(y)=0$.
Suppose also that the reduced rows outside these supports agree as multisets and
$\sum_{x\in\supp u_0}\tilde{r}_x=\sum_{y\in\supp u'_0}\tilde{r}'_y$.
Then the reduced rows on the two supports agree as multisets.
\end{lemma}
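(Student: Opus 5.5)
The plan is to reduce the statement to a problem about four vectors in a fixed three-dimensional space, and then use the graph structure to pin down the remaining freedom. Write $S=\supp u_0$, $S'=\supp u'_0$, and $a=u_0|_S$, $a'=u'_0|_{S'}$. These are unit vectors in $\R^4$ (after indexing) with zero coordinate sum. Let $V\subseteq\R^{I^*}$ be the span of the reduced rows outside the supports; by hypothesis it is the same for $G$ and $G'$. By \cref{eq:gram}, these rows are orthonormal, so $\dim V=|V(G)|-4$. Again by \cref{eq:gram}, every $\tilde r_x$ with $x\in S$ is orthogonal to every $\tilde r_z$ with $z\notin S$, because $u_0(z)=0$. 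Hence the support rows of both graphs lie in the same three-dimensional space $W=V^\perp$. There they have Gram matrices $I-aa^{\top}$ and $I-a'a'^{\top}$.

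Such a configuration is rigid up to an isometry. The map $e_x\mapsto\tilde r_x$ on $\R^S$ factors as the orthogonal projection onto $a^\perp$ followed by an isometry $\Phi:a^\perp\to W$, so $\tilde r_x=\Phi(e_x-a_xa)$. Similarly $\tilde r'_y=\Phi'(e_y-a'_ya')$ for an isometry $\Phi':a'^\perp\to W$. Since $\sum_x a_x=0$, the row-sum hypothesis becomes $\Phi(\ones)=\Phi'(\ones)$, a vector of norm $2$ in $W$. The goal is therefore to show that, after relabelling $S'$ by a bijection with $S$, we have $a'=\pm a$ and $\Phi'=\Phi$ up to the compatible sign and permutation. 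The task thus reduces to determining the pair $(a,\Phi)$ modulo permutations of $S$ from data shared by $G$ and $G'$.

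Next I would bring in the adjacency structure. The common spectral data give $A_G(x,y)=\langle\tilde r_x,D\tilde r_y\rangle$, where $D=\diag(\lambda_i)_{i\in I^*}$. For $x\in S$ and $z\notin S$, the entry $A_G(x,z)=\langle \tilde r_x,P_WD\tilde r_z\rangle$ lies in $\{0,1\}$. The vectors $w_z=P_WD\tilde r_z$ are the same for $G$ and $G'$ because the outside rows agree. In addition, the kernel equation $(A_Gu_0)(w)=0$ forces the neighbourhood of every vertex $w$ within $S$ to be a zero-sum subset for $a$. Combined with \cref{eq:support-neighbourhoods}, this severely restricts $a$. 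The only zero-sum subsets of $S$ are $\varnothing$, $S$, and pairs $\{x,y\}$ with $a_x=-a_y$, since a triple or singleton with zero sum is excluded as in the three-vertex argument. Moreover, distinct support vertices must be separated by such pairs. So $a$ must have the form $(\alpha,-\alpha,\beta,-\beta)$ after relabelling, possibly with $|\alpha|=|\beta|$. Enough separating pairs must also occur as neighbourhood traces of actual vertices, and the same holds for $a'$.

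I expect the last step to be the main obstacle: showing that the data $\Phi(\ones)$, the common vectors $w_z$, the $0/1$ constraints $\langle\Phi(e_x-a_xa),w_z\rangle\in\{0,1\}$, and the within-support entries $\langle\tilde r_x,D\tilde r_y\rangle\in\{0,1\}$ determine the four vectors. I would first use the subset structure: for each outside vertex $z$ whose neighbourhood in $S$ is a pair $\{x,y\}$, the vector $w_z$ satisfies $\langle w_z,\Phi(e_x+e_y)\rangle=1$ and is orthogonal to the other two images. Since $\Phi(e_x+e_y)=\tilde r_x+\tilde r_y$, the pair sums $\tilde r_x+\tilde r_y$ are linked to the shared vectors $w_z$. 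Together with $\Phi(\ones)$, I would try to show that these pair sums are determined in $W$ for both graphs. Four vectors in a three-dimensional space are then recovered from their total and two independent pair sums once the differences within pairs are fixed by the Gram matrix. If some pair is not realised by an outside vertex, I would fall back on the within-support adjacencies and the eigenvalue equation $A_{G}|_W$-compatibility to supply the missing constraint.
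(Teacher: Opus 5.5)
Your setup is sound: the support rows of both graphs lie in the common three-dimensional space $W$, the vectors $w_z=P_WD\tilde r_z$ are shared, and $A_{G'}(y,z)=\langle\tilde r'_y,w_z\rangle$ for $y\in\supp u'_0$. But the argument stops where the real work begins, and two facts it needs are left open. First, you allow $u_0|_S=(\alpha,-\alpha,\beta,-\beta)$ with $|\alpha|\ne|\beta|$, but your own separation remark rules this out. In that case the only zero-sum pairs are $\{s_1,s_2\}$ and $\{s_3,s_4\}$, so every vertex is adjacent to both or neither of $s_1,s_2$, contradicting \cref{eq:support-neighbourhoods}. Hence there are two distinct zero-sum pair partitions and all support entries are $\pm\frac12$. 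The same holds for $u'_0$, and this is what gives $u'_0|_{S'}=\pm u_0|_S$ after relabelling. Second, you never show that some $z\notin S$ has $N_G(z)\cap S$ equal to a pair, and the fallback to within-support adjacencies is not an argument. The missing step is as follows. Suppose every outside vertex is adjacent to all or none of $S$. Then each support vertex is adjacent either to no vertex of $S$ or to a zero-sum pair not containing itself, which forces $G[S]$ to be edgeless or a four-cycle. In either case two support vertices have equal neighbourhoods, again contradicting \cref{eq:support-neighbourhoods}.

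With these two facts, your pair-sum idea does close the proof, and it is a genuine alternative to the paper's computation. However, it has to be run on the $G'$ side, which you do not do. Suppose $N_G(z)\cap S=\{x,y\}$. Then $\langle w_z,\tilde r_u\rangle$ takes the values $1,1,0,0$ on the support rows, so $\langle w_z,\tilde r_x+\tilde r_y\rangle=2$, not $1$. By \cref{eq:gram}, the vector $\tilde r_x+\tilde r_y$ has the same inner products, and the support rows span $W$, so $w_z=\tilde r_x+\tilde r_y$. In $G'$, the entries $\langle\tilde r'_{y'},w_z\rangle\in\{0,1\}$ over $y'\in\supp u'_0$ sum to $\langle\Phi(\ones),w_z\rangle=2$. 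So the trace of $z$ on $\supp u'_0$ is again a zero-sum pair $\{x',y'\}$, and $w_z=\tilde r'_{x'}+\tilde r'_{y'}$. Both within-pair differences then lie on the line $W\cap\{w_z,\Phi(\ones)\}^{\perp}$ and have length one, which fixes the four rows as a multiset in both graphs. The paper instead writes $f'=\alpha f+\beta g$ and uses \cref{eq:integrality} at such a probe vertex to obtain $\alpha\beta=0$. As written, your sentence about recovering the rows ``once the differences within pairs are fixed by the Gram matrix'' hides the whole difficulty. The Gram matrix determines the configuration only up to an isometry of $W$. With the row sum fixed, the unknown is exactly a rotation or reflection of the primed configuration in the plane $W\cap\Phi(\ones)^{\perp}$.
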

\begin{proof}
  We first show that $\supp u_0$ admits at least two distinct
  partitions into pairs on which $u_0$ sums to zero.
  For $w\in V(G)$, let $N_0(w)=N_G(w)\cap\supp u_0$.
  Then $\sum_{x\in N_0(w)}u_0(x)=(A_Gu_0)(w)=0$.
  If a nonempty proper subset $T$ of $\supp u_0$ satisfies
  $\sum_{x\in T}u_0(x)=0$, then $|T|=2$ and
  $\sum_{x\in\supp u_0\setminus T}u_0(x)=0$, since the four support
  entries are nonzero and sum to zero.
  Choose distinct $x,y\in\supp u_0$.
  By \cref{eq:support-neighbourhoods}, there is a vertex $w_1$ for which
  $N_0(w_1)$ contains exactly one of $x,y$.
  Thus $N_0(w_1)$ is nonempty and proper, and
  $\{N_0(w_1),\supp u_0\setminus N_0(w_1)\}$ is a first partition
  into zero-sum pairs.
  The two vertices of $N_0(w_1)$ also have different neighbourhoods,
  so there is a vertex $w_2$ for which $N_0(w_2)$ contains exactly
  one of them. Again $N_0(w_2)$ is nonempty and proper, so
  $\{N_0(w_2),\supp u_0\setminus N_0(w_2)\}$ is a partition into
  zero-sum pairs. It differs from the first partition because its
  two pairs separate the vertices of $N_0(w_1)$.

  We label $\supp u_0=\{a,b,c,d\}$ so that $u_0$ sums to zero
  on each pair in the two partitions
  $\{\{a,c\},\{b,d\}\}$ and $\{\{a,d\},\{b,c\}\}$.
  Since $u_0$ sums to zero on each of $\{a,c\}$, $\{a,d\}$, and
  $\{b,c\}$, we have
  $u_0(c)=u_0(d)=-u_0(a)=-u_0(b)$.
  Since $u_0$ is a unit vector, a choice of sign gives
  \begin{equation}
    \label{eq:support4}
    \begin{aligned}
      u_0(a)&=u_0(b)=\frac12, &
      u_0(c)&=u_0(d)=-\frac12.
    \end{aligned}
  \end{equation}
  Since $\sum_yu'_0(y)=0$ and $|\supp u'_0|=4$, the same argument
  gives the nonzero entries $(\tfrac12,\tfrac12,-\tfrac12,-\tfrac12)$
  for $u'_0$, up to permutation and a choice of sign.

  We now show that $\{\tilde{r}_x:x\in\supp u_0\}$ and
  $\{\tilde{r}'_y:y\in\supp u'_0\}$ have the same centroid and span the
  same affine plane.
  The rows outside each support are orthonormal by \cref{eq:gram}.
  We relabel the vertices of $G'$ outside $\supp u'_0$ so that
  $\tilde{r}'_z=\tilde{r}_z$ for every $z\notin\supp u_0$.
  We choose the sign of $u'_0$ and label
  $\supp u'_0=\{a',b',c',d'\}$ so that
  $u'_0(a')=u'_0(b')=\tfrac12$ and
  $u'_0(c')=u'_0(d')=-\tfrac12$.
  Let $\sigma = \tilde{r}_a + \tilde{r}_b$,
  $f = (\tilde{r}_a - \tilde{r}_b)/\sqrt{2}$, and
  $g = (\tilde{r}_c - \tilde{r}_d)/\sqrt{2}$.
  Let $\sigma',f',g'$ be given by the formulas for $\sigma,f,g$ with
  primed vertices and rows.
  Since $u_0$ is orthogonal to every $u_i$ with $i\in I^*$,
  we have $\sum_xu_0(x)\tilde{r}_x=0$.
  Substituting \cref{eq:support4} gives
  $\tilde{r}_a+\tilde{r}_b=\tilde{r}_c+\tilde{r}_d$, so $\sigma=\tilde{r}_c+\tilde{r}_d$. Hence
  \begin{equation}
    \label{eq:special-rows}
    \begin{aligned}
    \tilde{r}_a &= \frac{1}{2} \sigma + \frac{1}{\sqrt{2}} f, &
    \tilde{r}_b &= \frac{1}{2} \sigma - \frac{1}{\sqrt{2}} f, &
    \tilde{r}_c &= \frac{1}{2} \sigma + \frac{1}{\sqrt{2}} g, &
    \tilde{r}_d &= \frac{1}{2} \sigma - \frac{1}{\sqrt{2}} g.
    \end{aligned}
  \end{equation}
  The centroids of $\{\tilde{r}_a,\tilde{r}_b,\tilde{r}_c,\tilde{r}_d\}$ and
  $\{\tilde{r}'_{a'},\tilde{r}'_{b'},\tilde{r}'_{c'},\tilde{r}'_{d'}\}$ are their averages
  $\sigma/2$ and $\sigma'/2$. By the assumed equality of the support-row sums,
  \begin{equation}\label{eq:quartet-offset}
    \frac12\sigma'=\frac14\sum_{y\in\supp u'_0}\tilde{r}'_y
    =\frac14\sum_{x\in\supp u_0}\tilde{r}_x=\frac12\sigma.
  \end{equation}
  By \cref{eq:gram,eq:support4}, the four reduced support rows have squared norm $3/4$,
  inner product $-1/4$ within each of the pairs $\{a,b\},\{c,d\}$, and
  inner product $1/4$ between the two pairs.
  Substituting these values into the definitions of $\sigma,f,g$ shows that
  these three vectors are orthonormal. They are orthogonal to the outside
  rows by \cref{eq:gram}. Together with
  $(\tilde{r}_z)_{z\in V(G)\setminus\supp u_0}$, they form an orthonormal
  family of $3+(|V(G)|-4)=|I^*|$ vectors, hence an orthonormal basis of $\R^{I^*}$.
  Since $\sigma'=\sigma$ and $\tilde{r}'_z=\tilde{r}_z$ for
  $z\notin\supp u_0$, the orthonormal bases
  $\{\sigma,f,g\}\cup\{\tilde{r}_z:z\notin\supp u_0\}$ and
  $\{\sigma',f',g'\}\cup\{\tilde{r}'_z:z\notin\supp u'_0\}$ give
  \begin{equation}\label{eq:quartet-plane}
   \spn(f,g)
   =\spn\bigl(\{\tilde{r}_z:z\notin\supp u_0\}\cup\{\sigma\}\bigr)^\perp
   =\spn(f',g').
  \end{equation}

  We now use the entries of $A_{G'}$ to show that $f'\in\{\pm f,\pm g\}$.
  Let $\alpha,\beta\in\R$ be the coefficients in $f'=\alpha f+\beta g$.
  Since $(f,g)$ is orthonormal and $f'$ is a unit vector,
  $\alpha^2+\beta^2=1$.
  For each $z\in V(G)\setminus\supp u_0$, let
  $\Delta_z^f=A_G(a,z)-A_G(b,z)$ and
  $\Delta_z^g=A_G(c,z)-A_G(d,z)$.
  By \cref{eq:rows,eq:special-rows},
  \begin{equation}\label{eq:quartet-adjacency}
    \begin{aligned}
      \sqrt2 f^{\top}\diag(\lambda_i)_{i\in I^*}\tilde{r}_z &= \Delta_z^f,\\
      \sqrt2 g^{\top}\diag(\lambda_i)_{i\in I^*}\tilde{r}_z &= \Delta_z^g,\\
      \sigma^{\top}\diag(\lambda_i)_{i\in I^*}\tilde{r}_z &= A_G(a,z)+A_G(b,z).
    \end{aligned}
  \end{equation}
  Since $\sigma'=\sigma$ and $f'=\alpha f+\beta g$, we have
  $\tilde{r}'_{a'}=\tfrac12\sigma+\tfrac1{\sqrt2}(\alpha f+\beta g)$
  by \cref{eq:special-rows} for $G'$.
  Substituting this formula for $\tilde{r}'_{a'}$ and $\tilde{r}'_z=\tilde{r}_z$ into
  the second identity in \cref{eq:rows} for $G'$, and using
  \cref{eq:quartet-adjacency}, gives
  \begin{equation}
    \label{eq:integrality}
    A_{G'}(a', z) = \frac{1}{2}\bigl(A_G(a,z)+A_G(b,z)\bigr)
      + \frac{1}{2}(\alpha\Delta_z^f+\beta\Delta_z^g)\in\{0,1\}.
  \end{equation}

  We have $\alpha\beta=0$ whenever a vertex $z\notin\supp u_0$ satisfies
  \begin{equation}\label{eq:quartet-probe}
    A_G(a,z)+A_G(b,z)=A_G(c,z)+A_G(d,z)=1.
  \end{equation}
  Indeed, then $\Delta_z^f,\Delta_z^g\in\{\pm1\}$ and
  $A_G(a,z)+A_G(b,z)=1$, so \cref{eq:integrality} gives
  $\alpha\Delta_z^f+\beta\Delta_z^g=\pm1$.
  Squaring gives
  $1=\alpha^2+\beta^2+2\alpha\beta\Delta_z^f\Delta_z^g
  =1+2\alpha\beta\Delta_z^f\Delta_z^g$, so $\alpha\beta=0$.
  \Cref{fig:quartet-geometry} gives a geometric interpretation of this calculation.

  \input{figures/quartet-geometry}

  We show that \cref{eq:quartet-probe} holds for some
  $z\in V(G)\setminus\supp u_0$.
  By \cref{eq:support4}, every $N_0(w)$ is empty, equal to $\supp u_0$, or one of
  $\{a,c\}$, $\{a,d\}$, $\{b,c\}$, $\{b,d\}$.
  Suppose no such $z$ exists. Then this list implies
  $N_0(z)\in\{\varnothing,\supp u_0\}$ for every
  $z\notin\supp u_0$.
  For $w\in\supp u_0$, we have $w\notin N_0(w)$ because $G$ has no loops.
  Thus a nonempty $N_0(w)$ must be one of
  $\{a,c\}$, $\{a,d\}$, $\{b,c\}$, and $\{b,d\}$.
  Every nonisolated vertex in the graph induced by $\supp u_0$
  therefore has exactly two neighbours, one in $\{a,b\}$ and one in
  $\{c,d\}$. A nonisolated vertex in $\{a,b\}$ is adjacent to the
  other vertex of $\{a,b\}$; the same holds for $\{c,d\}$.
  If the induced graph has an edge, it therefore also has an edge
  between $\{a,b\}$ and $\{c,d\}$.
  Each of these two sets then contains a nonisolated vertex and hence
  both its vertices are nonisolated. All four vertices have degree two,
  so the induced graph is a cycle of length four.
  Thus either the induced graph is edgeless, or it is a four-cycle.
  There are distinct $x,y\in\supp u_0$ with $N_0(x)=N_0(y)$:
  any two vertices in the edgeless case, or opposite vertices in the
  four-cycle case. Also
  $N_G(x)\setminus\supp u_0=N_G(y)\setminus\supp u_0$, because every
  vertex outside the support is adjacent to all four support vertices
  or to none. Hence $N_G(x)=N_G(y)$, contradicting
  $N_G(x)\ne N_G(y)$ for distinct $x,y\in\supp u_0$.

  A vertex $z\notin\supp u_0$ satisfying \cref{eq:quartet-probe}
  exists, so $\alpha\beta=0$. Together with $\alpha^2+\beta^2=1$,
  this gives $(\alpha,\beta)\in\{(1,0),(-1,0),(0,1),(0,-1)\}$,
  hence $f'\in\{\pm f,\pm g\}$. Since $(f',g')$ is orthonormal in
  $\spn(f,g)$, we have $\{\pm f',\pm g'\}=\{\pm f,\pm g\}$.
  Together with $\sigma'=\sigma$, \cref{eq:special-rows} now gives
  \begin{equation}\label{eq:quartet-row-equality}
    \{\tilde{r}_a,\tilde{r}_b,\tilde{r}_c,\tilde{r}_d\}
    =\{\tilde{r}'_{a'},\tilde{r}'_{b'},\tilde{r}'_{c'},\tilde{r}'_{d'}\}.
  \end{equation}
  The four vectors on each side are distinct by \cref{eq:special-rows},
  so they agree as multisets.
\end{proof}

\begin{proof}[Proof of \cref{thm:main}]

 Suppose $G\equiv_{\mathcal D_3}G'$. By \cref{lem:characterisation},
 $G\sim_{\mathrm{im}}G'$. Hence $G\cong G'$ by \cref{lem:reconstruction}.
 Conversely, an isomorphism $\pi:G\to G'$ gives a bijection
 $\varphi\mapsto\pi\circ\varphi$ between homomorphisms $F\to G$ and $F\to G'$
 for every $F$, so $G\equiv_{\mathcal D_3}G'$.
 A graph with simple spectrum has nullity at most one, because a real
 symmetric matrix with pairwise distinct eigenvalues has zero as an
 eigenvalue at most once. Hence $\mathcal D_3$ is complete on simple
 spectrum.
\end{proof}

%% file: figures/network-subdivision.tex
\begin{figure}[t]
\centering
\begin{tikzpicture}[
  x=.9cm,y=.9cm,
  tensor/.style={circle,fill=black,inner sep=2.5pt},
  adjacency/.style={rectangle,draw,fill=white,minimum size=6pt,inner sep=0pt},
  vertex/.style={circle,fill=black,inner sep=1.6pt},
  every node/.style={font=\small},
  line width=.6pt,
]
\node at (1.5,1.8) {Network on $\mathcal N$};
\draw (0,0)--(3,0);
\draw (0,0)..controls (.5,1.5) and (2.5,1.5)..(3,0)
  node[pos=.5,adjacency] {};
\draw (0,0)..controls (.5,-1.5) and (2.5,-1.5)..(3,0)
  node[pos=.3,adjacency] {} node[pos=.7,adjacency] {};
\node[tensor] at (0,0) {};
\node[tensor] at (3,0) {};
\node[adjacency] at (.15,-1.75) {};
\node[anchor=west] at (.35,-1.75) {$\langle\cdot,A_G\cdot\rangle_G$};
\node[tensor] at (.15,-2.3) {};
\node[anchor=west] at (.35,-2.3) {$\langle\cdot,\cdot,\cdot\rangle_G$};

\draw[->] (3.45,0)--(4.45,0);
\node[align=center,font=\scriptsize] at (3.95,-.58) {Suppress\\degree-two\\vertices};
\begin{scope}[xshift=4.5cm]
  \node at (1.5,1.8) {Cubic multigraph $\overline{\mathcal N}$};
  \draw (0,0)--(3,0) node[midway,above] {$q_e=0$};
  \draw (0,0)..controls (.5,1.5) and (2.5,1.5)..(3,0)
    node[midway,above] {$q_e=1$};
  \draw (0,0)..controls (.5,-1.5) and (2.5,-1.5)..(3,0)
    node[midway,below] {$q_e=2$};
  \node[vertex] at (0,0) {};
  \node[vertex] at (3,0) {};
  \node[align=center] at (1.5,-2.05) {$q_e$: adjacency forms\\on each path of $\mathcal N$};
\end{scope}

\draw[->] (8.45,0)--(9.45,0);
\node[align=center,font=\scriptsize] at (8.95,-.58) {Subdivide\\each edge\\($m_e=3$)};
\begin{scope}[xshift=9cm]
  \node at (1.5,1.8) {Simple graph $\overline{\mathcal N}^{(3,3,3)}$};
  \draw (0,0)--(3,0) node[pos=.333,vertex] {} node[pos=.667,vertex] {};
  \draw (0,0)..controls (.5,1.5) and (2.5,1.5)..(3,0)
    node[pos=.333,vertex] {} node[pos=.667,vertex] {};
  \draw (0,0)..controls (.5,-1.5) and (2.5,-1.5)..(3,0)
    node[pos=.333,vertex] {} node[pos=.667,vertex] {};
  \node[vertex] at (0,0) {};
  \node[vertex] at (3,0) {};
  \node[align=center] at (1.5,-2.05) {$m_e$ counts graph edges\\on the subdividing path};
\end{scope}
\end{tikzpicture}
\caption{A network on $\mathcal N$, its cubic multigraph, and one simple subdivision.
The path represented by $e$ has $q_e$ adjacency forms and $q_e+1$ edges in $\mathcal N$;
a direct edge has $q_e=0$.
The subdivision on the right is chosen independently of the network paths,
with $m_e=3$ on every edge.
The expansions of $\val_G(\mathcal N)$ and $\hom(\overline{\mathcal N}^{\mathbf m},G)$
use the same coefficients $c_G(\overline{\mathcal N},\ell)$, with weights
$\prod_e\ell(e)^{q_e}$ and $\prod_e\ell(e)^{m_e}$, respectively.
Varying the lengths $m_e\geq3$ determines these coefficients and hence
the network value for every choice of $q_e\geq0$.}
\label{fig:network-subdivision}
\end{figure}

%% file: figures/quartet-geometry.tex
\begin{figure}[t]
\centering
\begin{tikzpicture}[
  every node/.style={font=\small},
  point/.style={circle,fill=black,inner sep=2pt},
  candidate/.style={circle,draw,fill=white,inner sep=2pt},
  line width=.6pt,
]
\node at (0,2.65) {Affine plane $\sigma/2+\spn(f,g)$};
\draw[densely dotted,->] (-1.95,0)--(2,0) node[right] {$f$};
\draw[densely dotted,->] (0,-1.95)--(0,2) node[above] {$g$};
\draw (0,0) circle[radius=1.35];
\draw[line width=.9pt] (-.6,1.95)--(1.95,-.6);
\node[anchor=west,font=\footnotesize] at (2,-.6) {adjacency to $z$: $1$};
\draw[line width=.9pt] (-1.95,.6)--(.6,-1.95);
\node[anchor=west,font=\footnotesize] at (.65,-1.95) {adjacency to $z$: $0$};
\node[fill=white,inner sep=1pt,anchor=north west] at (.05,-.05) {$\sigma/2$};
\node[point,label=above right:$\tilde{r}_a$] at (1.35,0) {};
\node[point,label=below left:$\tilde{r}_b$] at (-1.35,0) {};
\node[point,label=above right:$\tilde{r}_c$] at (0,1.35) {};
\node[point,label=below left:$\tilde{r}_d$] at (0,-1.35) {};
\node[candidate] (excluded) at (135:1.35) {};
\node[anchor=east,align=right,font=\footnotesize] at (-1.85,1.65)
  {excluded position\\for $\tilde{r}'_{a'}$};
\draw[thin] (-1.8,1.45)--(excluded);
\node at (0,-2.6) {Circle of radius $1/\sqrt2$};
\end{tikzpicture}
\caption{Geometric interpretation of the calculation giving $\alpha\beta=0$.
The figure assumes that $z\in V(G)\setminus\supp u_0$ is adjacent to $a,c$
and not to $b,d$; the other cases of \cref{eq:quartet-probe} are symmetric.
For a candidate $\tilde{r}'_{a'}=\sigma/2+(\alpha f+\beta g)/\sqrt2$,
the circle gives $\alpha^2+\beta^2=1$.
By \cref{eq:integrality}, the two lines have equations
$\alpha+\beta=1$ and $\alpha+\beta=-1$.
Squaring either equation and subtracting the circle equation gives
$2\alpha\beta=0$. The hollow point is on the circle but off both lines,
so \cref{eq:integrality} excludes it.}
\label{fig:quartet-geometry}
\end{figure}

%% file: restriction.tex
\section{Proof of Theorem~\ref{thm:closure}}
\label{sec:restriction}

For a subdivision of a multigraph $H$ in a graph $X$, let
$\iota:V(H)\hookrightarrow V(X)$ denote its vertex map, which sends
each vertex of $H$ to its corresponding vertex in the subdivision.
For a non-loop edge $e=vw$, we write $R_e$ for the path replacing $e$,
with endpoints $\iota(v)$ and $\iota(w)$.
For a loop $e$ at $v$, we write $R_e$ for the cycle replacing $e$,
which passes through $\iota(v)$.

For a connected graph $K$, write $\rho_K$ for its spectral radius.
If $K$ has at least two vertices, then $\rho_K>0$.
For such $K$, the Perron--Frobenius theorem gives a unique positive unit
$\rho_K$-eigenvector $u_K$ and the identity
$P_{\rho_K,K}(x,y)=u_K(x)u_K(y)$.
Let $\tau_K=\sum_xu_K(x)^3>0$ and, for every nonzero real $\lambda$, let
\begin{equation}\label{eq:perron-data}
 M_{\lambda,K}=P_{\lambda,K}\diag(u_K)P_{\lambda,K}.
\end{equation}

It suffices to prove the theorem for a subsequence of $(H_N)$,
because the class obtained from a subsequence is contained in
$\calF\subseteq\mathcal D_3$.
By hypothesis, for every $t$ some $H_N$ contains $K_t$ as a minor.
Deleting finitely many terms preserves this property, since those terms
have bounded numbers of vertices.
We can therefore choose and reindex a subsequence so that $H_N$ contains
$K_N$ as a minor for every $N\geq1$.
Throughout this section, $(H_N)$ denotes this reindexed sequence.

For graphs $G$ and $G'$ with equal labelled coefficients
(\cref{eq:contraction}) on every $H_N$,
the main step is to recover equality on every connected cubic multigraph $H$
(\cref{lem:cubic-coefficient-transfer}).
We embed subdivisions of $H$ in the $H_N$ with independently chosen
path lengths (\cref{sec:subdivision-embedding}).
Using these subdivisions, we evaluate labelled coefficients on the $H_N$
and interpolate in both the subdivision path lengths and the numbers of
vertices of the $H_N$ (\cref{sec:coefficient-interpolation}).
We then compare the contributions from connected components of $G$ and
$G'$ in decreasing order of spectral radius (\cref{sec:component-removal}).

\subsection{Embedding subdivisions with independent path lengths}
\label{sec:subdivision-embedding}

In \cref{sec:coefficient-interpolation}, we recover labelled coefficients
on $H$ by interpolation.
To this end, we show that subdivisions of $H$ in $H_N$ can realise
various path lengths as follows.

\begin{lemma}\label{lem:independent-path-lengths}
 Let $H$ be a connected cubic multigraph and let $m\geq1$ be an integer.
 For every sufficiently large $N$, there are an injection
 $\iota:V(H)\hookrightarrow V(H_N)$ and finite sets
 $Q_e\subseteq\Z_{\geq1}$ with $|Q_e|\geq m$ for each $e\in E(H)$
 such that, for every $\mathbf q\in\prod_eQ_e$, there is a subdivision
 of $H$ in $H_N$ with vertex map $\iota$ in which each edge $e$ is replaced
 by a path or cycle $R_e$ of length $q_e+1$, and at least one edge of $H_N$
 lies outside the subdivision.
\end{lemma}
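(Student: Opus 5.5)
The plan is to first build an auxiliary subcubic graph $H^+$ from $H$ by inserting small length-adjusting gadgets on every edge. Then I embed $H^+$ in $H_N$ as a topological minor and read off the path lengths.

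\textbf{The gadget.} Let $D$ be the $2\times3$ grid. Its bottom row is $s_0s_1s_2$, its top row is $t_0t_1t_2$, and its rungs are $s_jt_j$. The entry is $s_0$ and the exit is $s_2$.

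Suppose every edge of $D$ is replaced by a path of positive length: $b_1,b_2$ on the bottom edges, $\tau_1,\tau_2$ on the top edges, and $v_0,v_1,v_2$ on the rungs. There are four $s_0$--$s_2$ routes, with lengths
\begin{equation*}
L_1=b_1+b_2,\qquad L_2=v_0+\tau_1+v_1+b_2,\qquad L_3=b_1+v_1+\tau_2+v_2,\qquad L_4=v_0+\tau_1+\tau_2+v_2 .
\end{equation*}
These satisfy $L_2+L_3=L_1+L_4+2v_1$. Since $v_1\geq1$, the four lengths are not all equal. So every subdivided copy of $D$ has two $s_0$--$s_2$ routes of different lengths. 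Call the shorter one \emph{short} and the longer one \emph{long}.

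Each route misses at least one edge of $D$: $L_1$ misses $t_1t_2$, $L_4$ misses the rung $s_1t_1$, and $L_2$, $L_3$ miss a top or bottom edge.

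\textbf{The graph $H^+$.} Replace each edge $e$ of $H$, including loops and parallel edges, by a chain of $m$ copies of $D$ in series. The exit of each copy is joined by one edge to the entry of the next. The first entry and the last exit are joined to the endpoints of $e$, or both to $v$ when $e$ is a loop at $v$.

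Every gadget vertex then has degree at most three. In particular $s_0$ and $s_2$ get exactly one external edge, and original vertices keep degree three. So $H^+$ is a connected simple subcubic graph.

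\textbf{Embedding in $H_N$.} Put $n=|V(H^+)|$ and take $N\geq n$. Then $H^+$ is a minor of $K_n$, which is a minor of $K_N$, which is a minor of $H_N$. Because $H^+$ has maximum degree at most three, the standard fact that a subcubic minor is a topological minor gives a subdivision of $H^+$ in $H_N$. Let $\iota$ be its restriction to $V(H)$.

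\textbf{Choosing the sets $Q_e$.} For each edge $e$ and each $k\in\{0,\dots,m\}$, route $e$ through the chain of gadgets. Take the long route in the first $k$ gadgets and the short route in the remaining ones. Together with the fixed connecting paths, this gives a path (or a cycle through $\iota(v)$ for a loop) of length $\ell_e(k)$.

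The function $\ell_e$ is strictly increasing in $k$, so $Q_e=\{\ell_e(k)-1:0\le k\le m\}$ has $m+1\geq m$ elements. Choices for different edges use disjoint gadgets, so every $\mathbf q\in\prod_eQ_e$ is realised simultaneously by a subdivision of $H$ in $H_N$ with vertex map $\iota$. Each gadget leaves some edge of its subdivided copy unused, so at least one edge of $H_N$ lies outside the subdivision.

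\textbf{Main obstacle.} The step I expect to need most care is guaranteeing \emph{distinct} path lengths when we do not control the subdivision that $H_N$ induces. Simple digons could have two sides of equal length, which is why I use the $2\times3$ grid gadget and the identity $L_2+L_3=L_1+L_4+2v_1$ instead. I would also double-check the minor-to-topological-minor step for subcubic graphs, and that $H^+$ is genuinely simple even when $H$ has loops or parallel edges.
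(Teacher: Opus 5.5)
Your proof is correct and is essentially the paper's argument. Suppressing the degree-two vertices $t_0,t_2$ turns your $2\times3$ grid into the paper's diamond $K_4-ab$ (with $a=s_0$, $b=s_2$, $c=s_1$, $d=t_1$), so your identity $L_2+L_3=L_1+L_4+2v_1$ is exactly the paper's diamond-length identity, and the chaining, the subcubic minor-to-topological-minor step, the one-at-a-time switching and the unused-edge argument all match; the only cosmetic difference is that your auxiliary graph is subcubic rather than cubic, which is all the topological-minor step needs.
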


\begin{proof}
We construct an auxiliary cubic graph $D$ by replacing each edge of $H$
by a chain of $m$ diamonds.
The \emph{diamond} $K_4-ab$ is $K_4$ on vertices $a,b,c,d$ with the
edge $ab$ deleted. Its vertices $a,b$ have degree two, and $c,d$ have
degree three. A chain consists of $m$ disjoint copies together with
an edge from $b$ in each copy to $a$ in the next.
For each edge $vw$ of $H$, take a separate chain and add edges from
$v$ to $a$ in its first copy and from $b$ in its last copy to $w$.
The resulting graph $D$ is simple and cubic.
\Cref{fig:diamond-objects} illustrates the construction.
\input{figures/diamond-objects}

Every subdivision of a diamond has two $a$--$b$ paths of different lengths.
To see this, let $\len(acdb)$ denote the length of the path through
$a,c,d,b$, and use the same notation for the other paths. Then
\begin{equation}\label{eq:diamond-lengths}
 \len(acdb)+\len(adcb)
 =\len(acb)+\len(adb)+2\len(cd).
\end{equation}
Since $\len(cd)>0$, these four paths cannot all have the same length.

For $N\geq|V(D)|$, the simple graph $D$ is a subgraph of $K_N$,
so it is a minor of $H_N$.
Since $D$ is subcubic, it is a topological minor of
$H_N$~\cite[Section~1.7]{Diestel2017}.
Fix an isomorphism from a subdivision of $D$ onto a subgraph of $H_N$,
and let $\iota$ be its restriction to $V(H)$.
In each subdivided diamond in its image, choose two $a$--$b$ paths of
different lengths. For an edge $e=vw$ of $H$, we form $R_e$ by
concatenating one chosen path from each diamond in its chain with the
subdivided edges between consecutive diamonds and those from the chain's
ends to $\iota(v)$ and $\iota(w)$.
Switching from the shorter to the longer path in one copy at a time
gives at least $m$ distinct lengths for $R_e$.
Let $Q_e$ be the set of values $|E(R_e)|-1$ over these choices.
Every $R_e$ has an internal vertex, so $Q_e\subseteq\Z_{\geq1}$.
The paths and cycles for different edges have disjoint interiors, all avoiding
$\iota(V(H))$.
Since the diamond chains for different edges are disjoint, the path choices
in them are independent. Thus every $\mathbf q\in\prod_eQ_e$ can be realised by
a subdivision of $H$ in $H_N$ with the same vertex map $\iota$.
Every $a$--$b$ path in a diamond omits an edge of the diamond.
The corresponding subdivided edge in $H_N$ lies outside all chosen
paths and cycles $R_e$, proving the unused-edge assertion.
\end{proof}

\subsection{Recovering labelled coefficients by interpolation}
\label{sec:coefficient-interpolation}

To recover labelled coefficients on $H$ for $G$ and $G'$, we first analyse
the contributions from their connected components.
For a connected component $K$ with at least two vertices, the following lemma
evaluates its labelled coefficients on a cubic graph containing a subdivision
of $H$, with the edges outside the subdivision labelled by $\rho_K$.
The formula makes the dependence on subdivision path lengths and the number
of vertices of the containing graph explicit, providing the parameters for
interpolation.

\begin{lemma}\label{lem:perron-transfer}
 Let $K$ be a connected graph with at least two vertices, and use the notation of
 \cref{eq:perron-data}. Let $H$ be a cubic multigraph and let $X$ be a
 cubic graph containing a subdivision of $H$ with vertex map
 $\iota:V(H)\hookrightarrow V(X)$ in which each edge $e$ of $H$ is replaced
 by a path or cycle $R_e$.
 For each edge $e$ of $H$, let $q_e=|E(R_e)|-1$ and assume $q_e\geq1$.
 For a labelling $\ell:E(H)\to\R^\times$, let $\ell_X$ assign
 $\ell(e)$ to the edges of $R_e$ and $\rho_K$ to every other edge of $X$.
 Then
 \begin{equation}\label{eq:perron-transfer}
  c_K(X,\ell_X)=\tau_K^{|V(X)|-|V(H)|}
    \sum_{\varphi:V(H)\to V(K)}
    \prod_{e=vw\in E(H)}
      \left(\frac{M_{\ell(e),K}}{\tau_K}\right)^{q_e}(\varphi(v),\varphi(w)).
 \end{equation}
\end{lemma}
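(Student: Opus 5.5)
The plan is to compute $c_K(X,\ell_X)$ straight from \cref{eq:contraction}: every edge of $X$ lying outside the subdivision receives the Perron label, so its factor is rank one. Concretely, for every edge $xy$ of $X$ not in any $R_e$ the factor is $P_{\rho_K,K}(\varphi(x),\varphi(y))=u_K(\varphi(x))u_K(\varphi(y))$. We then group the factors by vertex of $X$.

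Call a vertex of $X$ \emph{free} if it is neither in $\iota(V(H))$ nor an interior vertex of some $R_e$. A free vertex lies on no $R_e$, so all three of its edges carry the label $\rho_K$. Summing over its image $x\in V(K)$ therefore gives $\sum_xu_K(x)^3=\tau_K$.

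Now take an interior vertex $y$ of $R_e$. It has degree three in the cubic graph $X$, so exactly one of its edges lies outside the subdivision, and that edge contributes a single factor $u_K(\varphi(y))$. The two path edges at $y$ carry the label $\ell(e)$, so summing over $\varphi(y)$ inserts $\diag(u_K)$ between consecutive projectors $P_{\ell(e),K}$.

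A vertex $\iota(v)$ has all three of its edges on the subdivision, since $H$ is cubic, so it receives no Perron factor. Hence summing over the interior vertices of $R_e$ gives
\[
P_{\ell(e)}\diag(u_K)P_{\ell(e)}\cdots\diag(u_K)P_{\ell(e)}
\]
with $q_e$ copies of $\diag(u_K)$. Because $P_{\ell(e)}$ is idempotent, this product equals $M_{\ell(e),K}^{q_e}$; the assumption $q_e\geq1$ is used here. This holds even when $\ell(e)$ is not an eigenvalue, since then both sides vanish.

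The step I expect to be the main obstacle is justifying that the free vertices decouple. The product of the Perron factors must factor as a product over vertices of powers of $u_K$. This works because each outside edge $xy$ contributes exactly $u_K(\varphi(x))u_K(\varphi(y))$, so every vertex collects one factor $u_K$ per incident outside edge. Its outside degree is $3$ if it is free, $1$ if it is interior, and $0$ if it is a branch vertex.

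Loops need a separate check. For a loop $e$ at $v$, the cycle $R_e$ with $q_e$ interior vertices gives the diagonal entry $M^{q_e}(\varphi(v),\varphi(v))$, which matches the formula.

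Finally we count the free vertices. There are $|V(X)|-|V(H)|-\sum_eq_e$ of them, so the total power of $\tau_K$ is $\tau_K^{|V(X)|-|V(H)|}$ multiplied by $\prod_e\tau_K^{-q_e}$. This is exactly the normalisation $(M/\tau_K)^{q_e}$ in \cref{eq:perron-transfer}. Summing over $\varphi:V(H)\to V(K)$ for the branch vertices completes the formula.
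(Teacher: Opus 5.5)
Your proposal is correct and follows essentially the same route as the paper. You factor each rank-one projector $P_{\rho_K,K}$ on edges outside the subdivision into vertex weights $u_K$, collect $\tau_K$ at each of the $|V(X)|-|V(H)|-\sum_e q_e$ free vertices, and use the idempotency of $P_{\ell(e),K}$ together with $q_e\geq1$ to turn each $R_e$ into $M_{\ell(e),K}^{q_e}$, with loops giving diagonal entries; the outside-degree count $3,1,0$ that this relies on comes from the interiors of the $R_e$ being disjoint and avoiding $\iota(V(H))$, which is worth stating explicitly as the paper does.
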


\Cref{fig:perron-objects} illustrates the graphs, paths, labels, and vertex maps
in \cref{lem:perron-transfer}.
\input{figures/perron-objects}

\begin{proof}
The vertices of $X$ consist of $\iota(V(H))$, the internal vertices of
the paths and cycles $R_e$, and the vertices outside all $R_e$.
Since $H$ and $X$ are cubic, the paths and cycles occupy all three edge incidences
at every vertex of $\iota(V(H))$.
An internal vertex of $R_e$ has exactly two incidences in $R_e$ and none
in any other $R_{e'}$, since the interiors are disjoint.
Its third incidence therefore lies outside the subdivision.
Every edge incident with a vertex outside all
$R_e$ lies outside the chosen subdivision.

In the sum defining $c_K(X,\ell_X)$, let $\psi:V(X)\to V(K)$ denote
the map being summed over and put $\varphi=\psi\circ\iota$.
For each edge $vw$ outside all $R_e$, factor its projector entry as
$P_{\rho_K,K}(\psi(v),\psi(w))=u_K(\psi(v))u_K(\psi(w))$.
Regrouping these factors by their endpoints gives no $u_K$ factor
at vertices of $\iota(V(H))$, one at each internal subdivision vertex, and three at each
vertex outside all $R_e$.
Summing over the image of each vertex outside all $R_e$ gives
$\sum_{x\in V(K)}u_K(x)^3=\tau_K$.
There are $|V(X)|-|V(H)|-\sum_eq_e$ such vertices.

On the paths and cycles $R_e$, retain the projector factors $P_{\ell(e),K}$,
including when $\ell(e)=\rho_K$.
For each edge $e=vw$, let $\lambda=\ell(e)$.
Summing over the image of one internal vertex of $R_e$ between fixed
neighbour images $x,y\in V(K)$ gives
\begin{equation}\label{eq:perron-path-factor}
 \sum_{z\in V(K)}P_{\lambda,K}(x,z)u_K(z)P_{\lambda,K}(z,y)
 =M_{\lambda,K}(x,y).
\end{equation}
Let $x_1,\ldots,x_{q_e}$ be the images of the internal vertices of $R_e$,
and put $x_0=\varphi(v)$ and $x_{q_e+1}=\varphi(w)$.
Since $P_{\lambda,K}^2=P_{\lambda,K}$ and $q_e\geq1$, summing over
these images gives
\begin{equation}\label{eq:path-image-sum}
 \begin{aligned}
 &\sum_{x_1,\ldots,x_{q_e}\in V(K)}
 \left(\prod_{j=0}^{q_e}P_{\lambda,K}(x_j,x_{j+1})\right)
 \left(\prod_{j=1}^{q_e}u_K(x_j)\right)\\
 &=\bigl[(P_{\lambda,K}\diag(u_K))^{q_e}P_{\lambda,K}\bigr]
   (\varphi(v),\varphi(w))\\
 &=M_{\lambda,K}^{q_e}(\varphi(v),\varphi(w)).
 \end{aligned}
\end{equation}
For a loop $e$, we have $\varphi(v)=\varphi(w)$, so the same calculation gives a diagonal
entry of $M_{\lambda,K}^{q_e}$.
Multiplying the contributions from the outside vertices and the $R_e$ for fixed $\varphi$,
then summing over $\varphi$, gives
\begin{equation}\label{eq:component-path-sum}
 c_K(X,\ell_X)
 =\tau_K^{|V(X)|-|V(H)|-\sum_eq_e}
   \sum_{\varphi:V(H)\to V(K)}
   \prod_{e=vw\in E(H)}M_{\ell(e),K}^{q_e}(\varphi(v),\varphi(w)).
\end{equation}
Substituting
$M_{\ell(e),K}^{q_e}=\tau_K^{q_e}(M_{\ell(e),K}/\tau_K)^{q_e}$
in \cref{eq:component-path-sum} proves \cref{eq:perron-transfer}.
The total exponent of $\tau_K$ is $|V(X)|-|V(H)|$, independent of the lengths of the $R_e$.
\end{proof}

The labelled coefficient $c_K(H,\ell)$ is a sum over vertex maps, with each
edge contributing an entry of the projector $P_{\ell(e),K}$.
The sum in \cref{eq:perron-transfer} uses entries of
$(M_{\ell(e),K}/\tau_K)^{q_e}$ for these factors.
We therefore seek to recover each projector as a linear combination of
these powers, with exponents determined by the available path lengths.

\begin{lemma}\label{lem:positive-interpolation}
Let $M_1,\ldots,M_p$ be positive semidefinite operators on finite-dimensional
Euclidean spaces, and let $P_i$ be the orthogonal projector onto $\im M_i$.
Suppose their spectra together contain at most $s$ distinct positive values,
and let $Q$ be a finite set of at least $s$ distinct positive integers.
Then there are real scalars $(a_q)_{q\in Q}$
with $\sum_{q\in Q}a_qM_i^q=P_i$ for every $i$.
\end{lemma}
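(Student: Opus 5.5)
The plan is to reduce the statement to scalar polynomial interpolation on the finite set of positive eigenvalues. Let $S=\{\mu_1,\ldots,\mu_{s'}\}$, with $s'\le s$, be the set of distinct positive values occurring in the spectra of $M_1,\ldots,M_p$. Each $M_i$ is positive semidefinite, so it has a spectral decomposition $M_i=\sum_{\mu}\mu\,E_{i,\mu}$ with pairwise orthogonal projectors $E_{i,\mu}$. The image of $M_i$ is the sum of its eigenspaces with positive eigenvalues, hence $P_i=\sum_{\mu\in S}E_{i,\mu}$, where $E_{i,\mu}=0$ if $\mu$ is not an eigenvalue of $M_i$. For every positive integer $q$ the kernel part contributes nothing, so $M_i^q=\sum_{\mu\in S}\mu^qE_{i,\mu}$. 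Therefore $\sum_{q\in Q}a_qM_i^q=P_i$ for all $i$ as soon as the scalar conditions
\[
 \sum_{q\in Q}a_q\mu^q=1\qquad\text{for every }\mu\in S
\]
hold. It therefore suffices to solve this linear system in the unknowns $(a_q)_{q\in Q}$.

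The system has $|S|=s'\le s\le|Q|$ equations. I would show that its coefficient matrix $(\mu^q)_{\mu\in S,\,q\in Q}$ has full row rank $s'$; then a solution exists, and any extra unknowns can be set to zero. For this step I would pick $s'$ distinct exponents $q_1<\cdots<q_{s'}$ in $Q$ and show that the square matrix $(\mu_j^{q_k})$ is nonsingular. This is a generalised Vandermonde matrix with distinct positive nodes and distinct real exponents, so its determinant is nonzero. This is the classical fact that a nonzero generalised polynomial $\sum_k c_kx^{q_k}$ has at most $s'-1$ positive roots, which follows from Descartes' rule of signs or by induction using Rolle's theorem after dividing by a power of $x$.

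The only genuine obstacle is this nonsingularity. The exponents in $Q$ are arbitrary and not consecutive, so the ordinary Vandermonde determinant does not apply directly, and positivity of the nodes is essential. This is exactly why the operators are required to be positive semidefinite rather than merely symmetric. I would give the short Rolle induction: if $f(x)=\sum_{k=1}^{n}c_kx^{q_k}$ is nonzero and has $n$ distinct positive roots, then $x^{-q_1}f$ also has these $n$ roots, so its derivative, a sum of $n-1$ terms with distinct exponents, has $n-1$ positive roots. This contradicts the inductive hypothesis unless all its coefficients $c_k(q_k-q_1)$ for $k\ge2$ vanish, which forces $f=c_1x^{q_1}$, which has no positive roots. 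A vector in the kernel of the transposed square matrix gives a vanishing combination at $s'$ points, so the matrix is nonsingular. The degenerate case $S=\varnothing$, where every $P_i=0$ and $a_q=0$ works, is handled trivially.
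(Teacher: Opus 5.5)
Your proposal is correct and follows the paper's proof. Like the paper, you reduce to the distinct positive eigenvalues, choose $s'$ exponents from $Q$, solve using the invertibility of the generalised Vandermonde matrix, and set the remaining $a_q$ to zero; the paper cites Gantmacher--Krein for the invertibility, whereas your Rolle induction proves it (the vanishing combination comes from a kernel vector of the matrix whose rows are indexed by the nodes $\mu_j$, which suffices since the matrix is square).
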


\begin{proof}
Let $\mu_1,\ldots,\mu_{s'}$ be the distinct positive
values in the spectra of $M_1,\ldots,M_p$.
Since $s'\leq s$, we can choose distinct elements $q_1,\ldots,q_{s'}$ of $Q$.
The generalised Vandermonde matrix $(\mu_h^{q_j})_{h,j=1}^{s'}$ is
invertible~\cite[p.~76]{GantmacherKrein2002}.
Hence there are scalars $a_{q_1},\ldots,a_{q_{s'}}$ with
$\sum_ja_{q_j}\mu_h^{q_j}=1$ for every $h$; we set the remaining $a_q$ to zero.
The operator $\sum_{q\in Q}a_qM_i^q$ is the identity on each positive
eigenspace and is zero on $\ker M_i$, since every $q$ is positive.
It therefore equals $P_i$.
\end{proof}

Let $H$ be a connected multigraph and let $\ell$ be a nonzero
edge labelling.
Since $H$ is connected and spectral projector entries between distinct
connected components of $G$ are zero, a map $V(H)\to V(G)$ can contribute
to $c_G(H,\ell)$ only if its image lies in one connected component.
Grouping these contributions by that connected component gives the
additivity identity
\begin{equation}\label{eq:component-contributions}
 c_G(H,\ell)=\sum_K c_K(H,\ell),
\end{equation}
where $K$ ranges over the connected components of $G$.

The interpolation in subdivision path lengths leaves the factor
$\tau_K^{|V(X)|-|V(H)|}$ from \cref{eq:perron-transfer} in the contribution
from each connected component $K$ with at least two vertices.
For $X=H_N$, we remove these factors by a second interpolation in the number
of vertices of $H_N$.
The following lemma combines these steps to recover sums of labelled
coefficients over connected components.

\begin{lemma}\label{lem:two-stage-interpolation}
 Let $\mathcal K$ be a finite family of connected graphs with at least two
 vertices, and use the notation of \cref{eq:perron-data}.
 Let $H$ be a multigraph, let
 $\ell:E(H)\to\R^\times$ be an edge labelling, and let
 $\varepsilon_K\in\R$ for each $K\in\mathcal K$.
 Let $(d_N)$ be an unbounded sequence of nonnegative integers.
 Suppose that, for every sufficiently large $N$, there are finite sets
 $Q_{N,e}\subseteq\Z_{\geq1}$ with
 $|Q_{N,e}|\geq\sum_{K\in\mathcal K}|V(K)|$ for $e\in E(H)$ such that
 \begin{equation}\label{eq:two-stage-samples}
  \sum_{K\in\mathcal K}\varepsilon_K\tau_K^{d_N}
    \sum_{\varphi:V(H)\to V(K)}
    \prod_{e=vw\in E(H)}
      \left(\frac{M_{\ell(e),K}}{\tau_K}\right)^{q_e}(\varphi(v),\varphi(w))=0
 \end{equation}
 for every $\mathbf q\in\prod_eQ_{N,e}$.
 Then $\sum_{K\in\mathcal K}\varepsilon_Kc_K(H,\ell)=0$.
\end{lemma}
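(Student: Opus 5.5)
The plan is to interpolate twice: first in the path lengths $q_e$ for each fixed large $N$, which turns every factor $(M_{\ell(e),K}/\tau_K)^{q_e}$ into the projector $P_{\ell(e),K}$; then in the exponents $d_N$, which removes the factors $\tau_K^{d_N}$.

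\emph{Preparation.} For each $K\in\mathcal K$ and each edge $e$, put $\lambda=\ell(e)$. The operator $M_{\lambda,K}/\tau_K=P_{\lambda,K}\diag(u_K)P_{\lambda,K}/\tau_K$ is positive semidefinite, because $u_K$ is positive and $\tau_K>0$. I claim its image is exactly $E_{\lambda,K}=\im P_{\lambda,K}$. Indeed, for $v\in E_{\lambda,K}$ we have $v^{\top}M_{\lambda,K}v=\sum_xu_K(x)v(x)^2$, which is positive when $v\ne0$. So $M_{\lambda,K}$ is positive definite on $E_{\lambda,K}$ and vanishes on its orthogonal complement. If $\lambda$ is not an eigenvalue of $K$, then $M_{\lambda,K}=0=P_{\lambda,K}$, which is consistent with this. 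Hence the orthogonal projector onto $\im(M_{\lambda,K}/\tau_K)$ is $P_{\lambda,K}$. For a fixed $e$, the operators $M_{\ell(e),K}/\tau_K$ with $K\in\mathcal K$ have together at most $\sum_{K}|V(K)|$ distinct positive eigenvalues. This is where the hypothesis on $|Q_{N,e}|$ is used.

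\emph{First interpolation (path lengths).} Fix a sufficiently large $N$. For each $e\in E(H)$, apply \cref{lem:positive-interpolation} to the family $(M_{\ell(e),K}/\tau_K)_{K\in\mathcal K}$ with $Q=Q_{N,e}$. This gives scalars $(a_{e,q})_{q\in Q_{N,e}}$ with $\sum_qa_{e,q}(M_{\ell(e),K}/\tau_K)^q=P_{\ell(e),K}$ for every $K$ simultaneously. Multiply \cref{eq:two-stage-samples} by $\prod_ea_{e,q_e}$ and sum over $\mathbf q\in\prod_eQ_{N,e}$. Each summand is a product over edges of one matrix entry depending only on $q_e$, so the sum factorises edge by edge. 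Each factor then becomes $P_{\ell(e),K}(\varphi(v),\varphi(w))$; for a loop this is a diagonal entry, which causes no difficulty. By \cref{eq:contraction} this yields
\begin{equation*}
 \sum_{K\in\mathcal K}\varepsilon_K\tau_K^{d_N}c_K(H,\ell)=0
 \quad\text{for every sufficiently large }N.
\end{equation*}

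\emph{Second interpolation (vertex counts).} Since $(d_N)$ is unbounded, pass to infinitely many $N$ with pairwise distinct values $d_N$. Group the components $K$ by the value $\tau=\tau_K>0$, and set $\beta_\tau=\sum_{K:\tau_K=\tau}\varepsilon_Kc_K(H,\ell)$. Then $\sum_\tau\beta_\tau\tau^{d}=0$ for infinitely many distinct nonnegative integers $d$, and the values $\tau$ are distinct and positive. The functions $d\mapsto\tau^d$ are therefore linearly independent on this set of integers. To see this, choose any $s$ of the exponents and use the invertibility of the generalised Vandermonde matrix, as in \cref{lem:positive-interpolation}. Alternatively, divide by the largest $\tau^{d}$ with $\beta_\tau\ne0$ and let $d\to\infty$. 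Hence every $\beta_\tau=0$, and summing over $\tau$ gives $\sum_K\varepsilon_Kc_K(H,\ell)=0$.

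The main obstacle is the first interpolation. Two points need checking there: that the image of $M_{\lambda,K}$ is exactly the eigenspace, so that the interpolated operator is $P_{\lambda,K}$ rather than a projector onto a smaller subspace; and that a single coefficient vector per edge works for all $K\in\mathcal K$ at once, which the count of distinct eigenvalues secures. The multilinear exchange of the sum over $\mathbf q$ with the sum over $\varphi$ is routine.
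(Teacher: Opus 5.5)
Your proposal is correct and follows the paper's proof. The paper uses the same positive-definiteness argument to show that $P_{\ell(e),K}$ is the image projector of $M_{\ell(e),K}/\tau_K$, and the same application of \cref{lem:positive-interpolation} with one coefficient vector per edge to reach $\sum_K\varepsilon_K\tau_K^{d_N}c_K(H,\ell)=0$. In the second stage the paper applies \cref{lem:positive-interpolation} to the scalar operators $t\mapsto\tau_Kt$, obtaining weights $b_h$ with $\sum_hb_h\tau_K^{d_{N_h}}=1$, whereas you group by $\tau_K$ and use linear independence of $d\mapsto\tau^d$; this is the same Vandermonde argument and even shows that each $\tau$-group sums to zero.
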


\begin{proof}
We first express $P_{\ell(e),K}$ as a linear combination
of powers of $M_{\ell(e),K}/\tau_K$, with the same scalars for all
$K\in\mathcal K$.
For every $K\in\mathcal K$ and $\lambda\ne0$, positivity of $u_K$ implies
that $M_{\lambda,K}$ is positive semidefinite. For $0\ne\xi\in E_{\lambda,K}$, we have
$\xi^\top M_{\lambda,K}\xi=\xi^\top\diag(u_K)\xi>0$.
The operator $M_{\lambda,K}$ is positive definite on $E_{\lambda,K}$ and zero
on $E_{\lambda,K}^{\perp}$, so its image is exactly $E_{\lambda,K}$.

For each edge $e$, the matrices $M_{\ell(e),K}/\tau_K$, for $K\in\mathcal K$,
are positive semidefinite and have image projectors $P_{\ell(e),K}$.
Their spectra together contain at most $\sum_{K\in\mathcal K}|V(K)|$
distinct positive values.
By \cref{lem:positive-interpolation}, for each sufficiently large $N$
and each edge $e$, there are scalars $a_{N,e,q}$ such that
\begin{equation}\label{eq:path-projector-interpolation}
 \sum_{q\in Q_{N,e}}a_{N,e,q}
   \left(\frac{M_{\ell(e),K}}{\tau_K}\right)^q
 =P_{\ell(e),K}
\end{equation}
for every $K\in\mathcal K$.
We multiply each instance of \cref{eq:two-stage-samples} by
$\prod_ea_{N,e,q_e}$ and sum over $\mathbf q\in\prod_eQ_{N,e}$.
The scalars $a_{N,e,q}$ are independent of $K$, so we may interchange
the sums and apply \cref{eq:path-projector-interpolation} on each edge.
By \cref{eq:contraction}, the resulting identity is
\begin{equation}\label{eq:two-stage-operators}
 \sum_{K\in\mathcal K}\varepsilon_K\tau_K^{d_N}c_K(H,\ell)=0.
\end{equation}

We next remove the factors $\tau_K^{d_N}$.
Since $d_N$ is unbounded, we can choose $|\mathcal K|$ indices $N_h$,
all sufficiently large, with distinct positive values of $d_{N_h}$.
Applying \cref{lem:positive-interpolation} to the operators
$t\mapsto\tau_Kt$ on $\R$, for $K\in\mathcal K$, with these exponents
gives scalars $b_h$ satisfying $\sum_hb_h\tau_K^{d_{N_h}}=1$ for every $K$.
Multiplying \cref{eq:two-stage-operators} at $N=N_h$ by $b_h$ and summing
over $h$ gives $\sum_{K\in\mathcal K}\varepsilon_Kc_K(H,\ell)=0$.
\end{proof}

\subsection{Comparing labelled coefficients}
\label{sec:component-removal}

We now prove \cref{lem:cubic-coefficient-transfer}, which extends equality
of labelled coefficients on the $H_N$ to every connected cubic multigraph.
We compare the combined contributions of connected components of maximum
spectral radius, remove those connected components, and repeat on the remaining
graphs.
\Cref{thm:closure} then follows from the characterisation in
\cref{lem:characterisation}(c).

\begin{lemma}\label{lem:maximum-radius-recovery}
 Let $G$ and $G'$ be finite simple graphs, at least one of which contains an edge.
 Suppose $c_G(H_N,\eta)=c_{G'}(H_N,\eta)$ for every $N\geq1$ and every
 labelling $\eta:E(H_N)\to\R^\times$.
 Let $r$ be the largest spectral radius among the connected components
 of $G$ and $G'$.
 Let $G^+$ and $G'^+$ be the disjoint unions of the connected components
 with spectral radius $r$ of $G$ and of $G'$, respectively.
 Then $c_{G^+}(H,\ell)=c_{G'^+}(H,\ell)$ for every connected cubic
 multigraph $H$ and every labelling $\ell:E(H)\to\R^\times$.
\end{lemma}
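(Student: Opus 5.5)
Fix a connected cubic multigraph $H$ and a labelling $\ell:E(H)\to\R^\times$. The plan is to apply \cref{lem:two-stage-interpolation} to the family $\mathcal K^+$ of components of $G$ and $G'$ with spectral radius $r$, with $\varepsilon_K=+1$ for components of $G$ and $-1$ for components of $G'$ (counted with multiplicity). Since $r>0$, every such component has at least two vertices. The conclusion of that lemma is
\[
\sum_{K\subseteq G^+}c_K(H,\ell)-\sum_{K\subseteq G'^+}c_K(H,\ell)=0,
\]
which equals $c_{G^+}(H,\ell)-c_{G'^+}(H,\ell)$ by \cref{eq:component-contributions}.

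To produce the required vanishing samples \cref{eq:two-stage-samples}, I take $m=\sum_{K\in\mathcal K^+}|V(K)|$ and apply \cref{lem:independent-path-lengths}. This gives, for large $N$, a map $\iota$ and sets $Q_{N,e}$ with $|Q_{N,e}|\ge m$. For each $\mathbf q$ I form the labelling $\eta=\ell_{H_N}$ that assigns $\ell(e)$ on $R_e$ and $r$ on all other edges of $H_N$, which are nonzero labels. The hypothesis gives $c_G(H_N,\eta)=c_{G'}(H_N,\eta)$, and \cref{eq:component-contributions} splits both sides over all components.

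The components are handled in two groups.

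\emph{Isolated vertices.} These have zero spectrum on $W$, so every projector vanishes. Since $H_N$ has at least one edge, they contribute $0$.

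\emph{Components of radius $r$.} For a component $K$ with $\rho_K=r$, \cref{lem:perron-transfer} with $X=H_N$ (cubic, $q_e\ge1$) gives exactly the $K$-summand of \cref{eq:two-stage-samples} with $d_N=|V(H_N)|-|V(H)|$. This sequence is unbounded since $H_N$ has a $K_N$ minor.

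\emph{Components with $0<\rho_K<r$.} This is the main obstacle: such components must contribute zero. Here the edges outside the subdivision carry label $r\notin\Lambda_K$, since all eigenvalues of $K$ have absolute value at most $\rho_K<r$. Hence $P_{r,K}=0$. \Cref{lem:independent-path-lengths} guarantees at least one such outside edge, so every term of $c_K(H_N,\eta)$ contains a zero factor and $c_K(H_N,\eta)=0$.

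This is precisely why the unused-edge assertion was included and why the outside label is chosen to be the maximal radius $r$. Any component with $\rho_K=r$ but $K$ of a different type is already in $\mathcal K^+$. Negative eigenvalue $-r$ causes no issue because the label is $+r$.

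With the samples \cref{eq:two-stage-samples} established for all large $N$, \cref{lem:two-stage-interpolation} finishes the proof. I expect no further difficulty beyond bookkeeping. The interpolation lemma needs $|Q_{N,e}|$ at least the total vertex count, which our choice of $m$ ensures. The quantities $\tau_K$ and $M_{\ell(e),K}$ are those of \cref{eq:perron-data} with $u_K$ the Perron vector at $\rho_K=r$, matching the label used on the outside edges.
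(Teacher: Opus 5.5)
Your proposal is correct and follows essentially the same route as the paper. The outside edges labelled $r$, together with the unused-edge clause of \cref{lem:independent-path-lengths}, kill every component with $\rho_K<r$ (including isolated vertices), since $P_{r,K}=0$. Then \cref{lem:perron-transfer} turns the hypothesis into the samples \cref{eq:two-stage-samples} with $d_N=|V(H_N)|-|V(H)|$, and \cref{lem:two-stage-interpolation} finishes. Your choice $m=\sum_{K\in\mathcal K^+}|V(K)|$ in place of the paper's $|V(G)|+|V(G')|$ is immaterial. The only bookkeeping you left implicit is that $d_N$ is eventually nonnegative, which follows from $|V(H_N)|\geq N$.
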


\begin{proof}
Fix a connected cubic multigraph $H$ and a labelling
$\ell:E(H)\to\R^\times$.
Let $\mathcal K$ be the family of connected components of $G^+$ and $G'^+$,
counted separately, and put $\varepsilon_K=1$ for those from $G^+$ and
$\varepsilon_K=-1$ for those from $G'^+$.
By \cref{eq:component-contributions}, our goal is
$\sum_{K\in\mathcal K}\varepsilon_Kc_K(H,\ell)=0$.

Put $m=|V(G)|+|V(G')|$ and $d_N=|V(H_N)|-|V(H)|$.
Since $H_N$ contains $K_N$ as a minor, $|V(H_N)|\geq N$, so $d_N$
is unbounded and eventually nonnegative.
Choose $N_0$ so that $d_N\geq0$ and \cref{lem:independent-path-lengths}
applies with this $m$ whenever $N\geq N_0$.
For each such $N$, write $\iota_N$ and $Q_{N,e}$ for the injection and sets
from that lemma, recording their dependence on $N$.
Thus $|Q_{N,e}|\geq m$.
For each $\mathbf q\in\prod_eQ_{N,e}$, choose a subdivision with this
vertex map and with each $R_e$ having length $q_e+1$.
Let $\ell_N(\mathbf q)$ assign $\ell(e)$ to the edges of $R_e$ and
$r$ to every edge outside the subdivision.

At least one edge lies outside the subdivision, by
\cref{lem:independent-path-lengths}, and receives the label $r$.
Every connected component $K$ of $G$ or $G'$ with $\rho_K<r$ has $P_{r,K}=0$,
so only the connected components of $G^+$ contribute to $c_G(H_N,\ell_N(\mathbf q))$,
and only those of $G'^+$ contribute to $c_{G'}(H_N,\ell_N(\mathbf q))$.
For every $K\in\mathcal K$, the labelling $\ell_N(\mathbf q)$ is the
labelling in \cref{lem:perron-transfer}, since $\rho_K=r$.
We apply that lemma to these connected components and sum as in
\cref{eq:component-contributions}.
Subtracting the evaluations of $c_G(H_N,\ell_N(\mathbf q))$ and
$c_{G'}(H_N,\ell_N(\mathbf q))$, which agree by hypothesis, gives
\begin{equation}\label{eq:operator-transfer}
 \sum_{K\in\mathcal K}\varepsilon_K\tau_K^{d_N}
   \sum_{\varphi:V(H)\to V(K)}
   \prod_{e=vw\in E(H)}
     \left(\frac{M_{\ell(e),K}}{\tau_K}\right)^{q_e}(\varphi(v),\varphi(w))=0.
\end{equation}
We apply \cref{lem:two-stage-interpolation} to $\mathcal K$, using $N\geq N_0$.
Every $K\in\mathcal K$ has spectral radius $r>0$ and hence at least two vertices.
The graphs in $\mathcal K$ have at most $m$ vertices in total, so
$|Q_{N,e}|\geq m\geq\sum_{K\in\mathcal K}|V(K)|$ for every $e$.
The sequence $(d_N)_{N\geq N_0}$ is nonnegative and unbounded, and
\cref{eq:operator-transfer} is the hypothesis \eqref{eq:two-stage-samples}.
The lemma therefore gives $\sum_{K\in\mathcal K}\varepsilon_Kc_K(H,\ell)=0$,
as required.
\end{proof}

\begin{lemma}\label{lem:cubic-coefficient-transfer}
 Let $G$ and $G'$ be finite simple graphs. Suppose
 $c_G(H_N,\eta)=c_{G'}(H_N,\eta)$ for every $N\geq1$ and every
 labelling $\eta:E(H_N)\to\R^\times$.
 Then $c_G(H,\ell)=c_{G'}(H,\ell)$ for every connected cubic
 multigraph $H$ and every labelling $\ell:E(H)\to\R^\times$.
\end{lemma}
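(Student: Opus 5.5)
We argue by induction on the total number of connected components with at least two vertices in $G$ and $G'$, peeling off the components of largest spectral radius with \cref{lem:maximum-radius-recovery}. Components that are single vertices contribute nothing to any labelled coefficient with nonzero labels, since all their spectral projectors $P_{\lambda}$ with $\lambda\ne0$ vanish. If neither graph has an edge, then every labelled coefficient on a cubic multigraph (which has edges) is zero for both graphs, and the claim is trivial. This is the base case.

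For the inductive step, assume that at least one of $G,G'$ has an edge. Let $r$ be the largest spectral radius among their components, and let $G^+,G'^+$ be as in \cref{lem:maximum-radius-recovery}. That lemma gives $c_{G^+}(H,\ell)=c_{G'^+}(H,\ell)$ for every connected cubic multigraph $H$. Let $G^-$ and $G'^-$ be the disjoint unions of the remaining components. By the additivity identity \cref{eq:component-contributions}, $c_G=c_{G^+}+c_{G^-}$ on every connected multigraph, and similarly for $G'$.

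The key point is to transfer the hypothesis to the pair $(G^-,G'^-)$. Each $H_N$ is itself a connected cubic graph, hence a connected cubic multigraph. So the conclusion of \cref{lem:maximum-radius-recovery}, applied with $H=H_N$, gives $c_{G^+}(H_N,\eta)=c_{G'^+}(H_N,\eta)$ for all $N$ and all labellings $\eta$. Subtracting this from the hypothesis yields $c_{G^-}(H_N,\eta)=c_{G'^-}(H_N,\eta)$ for all $N$ and $\eta$.

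The pair $(G^-,G'^-)$ has strictly fewer nontrivial components, because $G^+$ or $G'^+$ is nonempty. By induction, $c_{G^-}(H,\ell)=c_{G'^-}(H,\ell)$ for every connected cubic multigraph $H$. Adding the equality for the $+$ parts then gives the claim for $G$ and $G'$.

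The only delicate point is that the induction must run over pairs of arbitrary simple graphs, not just the original $G,G'$. It also requires that \cref{lem:maximum-radius-recovery} applies to $H=H_N$ itself, which holds since the $H_N$ are connected and cubic. Everything else is bookkeeping with \cref{eq:component-contributions}.
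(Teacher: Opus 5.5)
Your proposal is correct and takes the paper's approach: peel off the maximum-radius components with \cref{lem:maximum-radius-recovery}, apply that lemma with $H=H_N$ to transfer the hypothesis to $(G^-,G'^-)$ via \cref{eq:component-contributions}, and induct. The only difference is cosmetic: you induct on the number of components with at least two vertices rather than on all components, and this measure works equally well.
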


\begin{proof}
We use induction on the total number of connected components in $G$ and $G'$.
If both graphs are edgeless, all their projectors at nonzero eigenvalues
are zero. Since every connected cubic multigraph has an edge, all its
labelled coefficients are then zero.
Otherwise, let $r>0$ be the largest spectral radius among their connected components.
Let $G^+$ and $G'^+$ be the disjoint unions of the connected components
with spectral radius $r$ of $G$ and of $G'$, respectively.
By \cref{lem:maximum-radius-recovery},
$c_{G^+}(H,\ell)=c_{G'^+}(H,\ell)$ for every connected cubic multigraph $H$
and every nonzero edge labelling $\ell$.

Let $G^-$ and $G'^-$ be obtained from $G$ and $G'$, respectively,
by deleting these connected components.
For every $N$ and every labelling $\eta:E(H_N)\to\R^\times$, the equality
just proved applies to $H_N$, since $H_N$ is connected and cubic.
Together with the hypothesis and \cref{eq:component-contributions}, it gives
\begin{equation}\label{eq:residual-contractions}
 \begin{aligned}
  c_{G^-}(H_N,\eta)
   &=c_G(H_N,\eta)-c_{G^+}(H_N,\eta)\\
   &=c_{G'}(H_N,\eta)-c_{G'^+}(H_N,\eta)
    =c_{G'^-}(H_N,\eta).
 \end{aligned}
\end{equation}
At least one connected component is deleted, so $G^-$ and $G'^-$ have fewer
connected components in total.
The induction hypothesis therefore gives
$c_{G^-}(H,\ell)=c_{G'^-}(H,\ell)$ for every connected cubic multigraph $H$
and every nonzero edge labelling $\ell$.
Adding the equal labelled coefficients for $G^+,G'^+$ and for $G^-,G'^-$,
and using \cref{eq:component-contributions}, gives
$c_G(H,\ell)=c_{G'}(H,\ell)$.
\end{proof}

\begin{proof}[Proof of \cref{thm:closure}]
Let $\calF'$ consist of $K_1$ and all subdivisions $H_N^{\mathbf m}$ of
the reindexed sequence, using the notation of \cref{sec:subcubic-characterisation}.
Let $G$ and $G'$ satisfy $G\equiv_{\calF'}G'$. Since $K_1\in\calF'$, we have
$|V(G)|=|V(G')|$.
We also have $\hom(H_N^{\mathbf m},G)=\hom(H_N^{\mathbf m},G')$
for every $N$ and every simple subdivision $H_N^{\mathbf m}$.
By \cref{lem:subdivision-coefficients},
$c_G(H_N,\ell)=c_{G'}(H_N,\ell)$ for every $N$ and every nonzero labelling $\ell$.
\Cref{lem:cubic-coefficient-transfer} gives
$c_G(H,\ell)=c_{G'}(H,\ell)$ for every connected cubic multigraph $H$
and every nonzero edge labelling $\ell$.
By \cref{lem:characterisation}(c),
$G\equiv_{\mathcal D_3}G'$.

The reverse implication follows from $\calF'\subseteq\mathcal D_3$.
Since $\calF'\subseteq\calF\subseteq\mathcal D_3$, this also proves
that the relations $\equiv_{\calF}$ and $\equiv_{\mathcal D_3}$ coincide.
Completeness on simple spectrum follows from \cref{thm:main}.
\end{proof}

%% file: figures/diamond-objects.tex
\begin{figure}[t]
\centering
\begin{tikzpicture}[
  x=.94cm,y=.94cm,
  every node/.style={font=\small},
  vertex/.style={circle,fill=black,inner sep=1.35pt},
  branch/.style={circle,fill=black,inner sep=2pt},
  edge/.style={draw=black!50,line width=.55pt},
  chosen/.style={draw=black,line width=1.3pt},
]
\node at (0,2.15) {$H$};
\node at (4.3,2.15) {$D$ ($m=1$)};
\node at (9.4,2.15) {$X$};
\node[branch,label=left:$v$] (hu) at (-.7,0) {};
\node[branch,label=right:$w$] (hv) at (.7,0) {};
\draw (hu) .. controls (-.7,.8) and (.7,.8) .. (hv)
  node[midway,above] {$e$};
\draw (hu)--(hv) node[midway,above] {$f$};
\draw (hu) .. controls (-.7,-.8) and (.7,-.8) .. (hv)
  node[midway,below] {$g$};

\foreach \offset/\name in {4.3/d,9.4/x} {
  \begin{scope}[xshift=\offset*.94cm]
    \coordinate (\name u) at (-1.7,0);
    \coordinate (\name v) at (1.7,0);
    \foreach \height/\j in {1.2/e,0/f,-1.2/g} {
      \coordinate (\name a\j) at (-.8,\height);
      \coordinate (\name b\j) at (.8,\height);
      \coordinate (\name c\j) at (0,\height+.35);
      \coordinate (\name d\j) at (0,\height-.35);
      \draw[edge] (\name u)--(\name a\j)--(\name c\j)--(\name b\j)--(\name v);
      \draw[edge] (\name a\j)--(\name d\j)--(\name b\j);
      \draw[edge] (\name c\j)--(\name d\j);
    }
  \end{scope}
}
\node[branch,label=left:$v$] at (du) {};
\node[branch,label=right:$w$] at (dv) {};
\node[branch,label=left:$\iota(v)$] at (xu) {};
\node[branch,label=right:$\iota(w)$] at (xv) {};
\coordinate (xs) at (9.4,1.2);
\draw[chosen] (xu)--(xae)--(xce)--(xs)--(xde)--(xbe)--(xv);
\draw[chosen] (xu)--(xaf)--(xcf)--(xbf)--(xv);
\draw[chosen] (xu)--(xag)--(xcg)--(xbg)--(xv);
\foreach \name in {d,x} {
  \foreach \j in {e,f,g} {
    \foreach \p in {a,b,c,d} {\node[vertex] at (\name\p\j) {};}
  }
}
\node[vertex] at (xs) {};
\foreach \height/\j in {1.2/e,0/f,-1.2/g} {
  \node at (9,\height+.48) {$R_\j$};
}
\node[align=left,font=\footnotesize,anchor=west] at (10.5,1.5)
  {$Q_e=\{3,5\}$\\$q_e=5$};
\draw[->] (1.25,1.65)--(2.5,1.65)
  node[midway,above,font=\footnotesize] {replace edges};
\draw[->] (6.35,1.65)--(7.55,1.65)
  node[midway,above,font=\footnotesize] {subdivide};
\draw[->] (0,-2)--(9.4,-2)
  node[midway,fill=white,inner sep=3pt] {$\iota:V(H)\hookrightarrow V(X)$};
\end{tikzpicture}
\caption{The construction in \cref{lem:independent-path-lengths}.
Each edge of $H$ is replaced by a diamond in $D$.
The graph $X$ represents a subdivided copy of $D$ inside $H_N$;
the vertical edge of its top diamond is subdivided once.
The bold paths form the chosen subdivision of $H$.
The paths for $e$ have four or six edges, so $Q_e$ lists their possible
internal-vertex counts.}
\label{fig:diamond-objects}
\end{figure}

%% file: figures/perron-objects.tex
\begin{figure}[t]
\centering
\begin{tikzpicture}[
  x=.96cm,y=.96cm,
  every node/.style={font=\small},
  vertex/.style={circle,fill=black,inner sep=1.55pt},
  branch/.style={circle,fill=black,inner sep=2.2pt},
  outside/.style={circle,draw=black,fill=white,inner sep=2.2pt},
  route/.style={line width=1.1pt},
  perron/.style={densely dashed,line width=.65pt},
]
\node at (0,2.15) {$H$};
\node at (5.3,2.15) {$X$};
\node at (10.4,2.15) {$K$};
\node[branch,label=left:$v$] (hu) at (-.7,0) {};
\node[branch,label=right:$w$] (hv) at (.7,0) {};
\draw (hu) .. controls (-.7,.8) and (.7,.8) .. (hv)
  node[midway,above] {$e:\ell(e)$};
\draw (hu)--(hv) node[midway,above] {$f:\ell(f)$};
\draw (hu) .. controls (-.7,-.8) and (.7,-.8) .. (hv)
  node[midway,below] {$g:\ell(g)$};

\coordinate (xu) at (3.6,0);
\coordinate (xv) at (7,0);
\foreach \height/\j in {1.2/e,0/f,-1.2/g} {
  \coordinate (a\j) at (4.5,\height);
  \coordinate (b\j) at (6.1,\height);
  \coordinate (c\j) at (5.3,\height+.35);
  \coordinate (d\j) at (5.3,\height-.35);
  \draw[perron] (a\j)--(d\j)--(b\j);
  \draw[perron] (c\j)--(d\j);
  \draw[route] (xu)--(a\j)--(c\j)--(b\j)--(xv);
  \foreach \p in {a,b,c} {\node[vertex] at (\p\j) {};}
  \node[outside] at (d\j) {};
  \node at (4.9,\height+.48) {$R_\j$};
}
\node[branch,label=left:$\iota(v)$] at (xu) {};
\node[branch,label=right:$\iota(w)$] at (xv) {};

\draw[black!50] (10.4,0) ellipse (1.4 and .9);
\node[vertex,label=below:$\varphi(v)$] at (9.8,.15) {};
\node[vertex,label=above:$\varphi(w)$] at (11,-.15) {};
\draw[->] (1.25,.55)--(3.15,.55) node[midway,above] {$\iota$};
\draw[->] (7.75,.55)--(9.25,.55) node[midway,above] {$\psi$};
\draw[->] (0,-2)--(10.4,-2)
  node[midway,fill=white,inner sep=3pt] {$\varphi$};
\end{tikzpicture}
\caption{The objects in \cref{lem:perron-transfer}.
Edges of $R_e$ have label $\ell(e)$; each solid path has three internal vertices.
Dashed edges have label $\rho_K$; hollow vertices lie outside all $R_e$.
The target $K$ is schematic, with two vertex images marked.
The coefficient $c_K(X,\ell_X)$ sums over all maps $\psi:V(X)\to V(K)$;
the right side of \cref{eq:perron-transfer} sums over all
$\varphi:V(H)\to V(K)$.
For a fixed $\psi$, we have $\varphi=\psi\circ\iota$.}
\label{fig:perron-objects}
\end{figure}

%% file: open-problems.tex
\section{Open Problems}
\label{sec:open-problems}

\Cref{thm:incompleteness} shows that every class complete on simple
spectrum contains a graph with a $K_7$ minor.
For the subdivision families in \cref{thm:closure}, arbitrarily large
clique minors suffice for completeness.

\input{figures/cricket}

\begin{problem}\label{prob:clique-minors}
Must every graph class complete on simple spectrum contain, for each
$t\geq1$, a graph with a $K_t$ minor?
\end{problem}

An affirmative answer to \cref{prob:clique-minors} would imply the weak
Roberson conjecture, which states that every proper minor-closed graph
class $\mathcal C$ admits non-isomorphic graphs
$G\equiv_{\mathcal C}G'$~\cite[Conjecture~5 and Theorem~8.4]{Roberson2026}.
Every proper minor-closed class $\mathcal C$ excludes some $K_t$ and would
therefore fail to be complete on simple spectrum, yielding such a pair
with simple spectrum.
To prove the necessity in \cref{prob:clique-minors}, it suffices by
\cref{lem:ss-transfer} to find, for each $t\geq1$, a connected graph $B$
whose CFI pair is homomorphism indistinguishable
over the class of $K_t$-minor-free graphs.
The proof of \cref{thm:incompleteness} uses such a pair for $t=7$
from~\cite{NeuenSeppelt2026,ThilikosWiederrecht2024}.

\Cref{thm:main} leaves the case of nullity two open.
\begin{problem}\label{prob:nullity-two}
If $G$ has nullity two, does $G\equiv_{\mathcal D_3}G'$ imply
$G\cong G'$ for every graph $G'$?
\end{problem}

The nullity bound cannot be raised to three.
Let $B$ be the cricket graph, obtained from a triangle by attaching two
leaves to the same vertex (\cref{fig:cricket}).
The graphs $\CFI_0(B)$ and $\CFI_1(B)$ defined in
\cref{eq:ss-cfi-definition} each have $14$ vertices.
A direct calculation gives nullity three for both graphs.
They are non-isomorphic and homomorphism indistinguishable over
$\mathcal D_3$ by~\cite[Corollary~3.7 and the proof of Corollary~4.2]{Roberson2026}.

On pairs of graphs with simple spectrum, subcubic homomorphism
indistinguishability can be tested in polynomial time by \cref{thm:main}
and~\cite{BabaiGrigoryevMount1982}.
On arbitrary graphs, this relation is decidable
by~\cite[Corollary~39]{CaiYoung2026}.
However, deciding this relation is at least as hard as graph isomorphism
under polynomial-time many-one reductions.

\begin{problem}\label{prob:polynomial-testing}
Is there a class $\mathcal C\subseteq\mathcal D_3$, complete on simple
spectrum, such that deciding $G\equiv_{\mathcal C}G'$ for arbitrary
graphs $G$ and $G'$ is possible in polynomial time?
\end{problem}

To prove the hardness of deciding $G\equiv_{\mathcal D_3}G'$, we reduce
from graph isomorphism on connected graphs with at least two
vertices; this restriction is as hard as general graph
isomorphism~\cite[Section~16]{ZemlyachenkoKorneenkoTyshkevich1985}.
For such a graph $G$, let $G^\star$ be obtained by attaching a leaf to every
vertex. Then $G^\star$ is nonsingular: a vector in its adjacency kernel
vanishes on the original vertices by the equations at the leaves, and then
on the leaves by the equations at the original vertices.
The vertices of degree at least two in $G^\star$ induce $G$.
Thus two such graphs $G,G'$ satisfy $G\cong G'$ if and only if
$G^\star\equiv_{\mathcal D_3}G'^\star$, by \cref{thm:main}
or~\cite[Corollary~38]{CaiYoung2026}.
The construction of $G^\star$ takes polynomial time.

The families in \cref{thm:closure} define the same indistinguishability
relation as $\mathcal D_3$ on all graphs, so they inherit its
graph-isomorphism hardness. \Cref{prob:polynomial-testing} allows
$\mathcal C$ to distinguish fewer graph pairs than $\mathcal D_3$,
provided it remains complete on simple spectrum.

%% file: figures/cricket.tex
\begin{figure}[t]
\centering
\begin{tikzpicture}[
  vertex/.style={circle,fill,inner sep=2pt},
  line width=.7pt,
]
\node[vertex] (centre) at (0,0) {};
\node[vertex] (upperleft) at (-.8,1) {};
\node[vertex] (upperright) at (.8,1) {};
\node[vertex] (lowerleft) at (-1,-1) {};
\node[vertex] (lowerright) at (1,-1) {};
\draw (centre)--(upperleft)--(upperright)--(centre);
\draw (lowerleft)--(centre)--(lowerright);
\end{tikzpicture}
\caption{The cricket graph $B$.}
\label{fig:cricket}
\end{figure}